\documentclass[leqno]{amsart}

\usepackage{amssymb,amsfonts,amsthm}
\usepackage[dvipsnames]{xcolor}
\usepackage{graphicx}
\usepackage{subfigure}
\usepackage{amsthm}
\usepackage{tabularx}
\usepackage{amsmath}
\usepackage{bm}             
\usepackage[mathscr]{eucal}
\usepackage{color} 
\usepackage{cancel}
\usepackage{enumerate}
\usepackage{psfrag}
\usepackage{bbm}
\usepackage{appendix}
\usepackage[colorlinks=true,linkcolor=red,citecolor=blue]{hyperref}
\usepackage{dsfont}

\usepackage[normalem]{ulem}
\usepackage{cleveref}

\newcommand{\A}{\mathbb{A}}

\newcommand{\R}{\mathbb{R}}
\newcommand{\bfR}{\mathbb{R}}
\newcommand{\N}{\mathbb{N}}

\newcommand{\mcA}{\mathcal{A}}
\newcommand{\mcE}{\mathcal{E}}
\newcommand{\mcK}{\mathcal{K}}
\newcommand{\mcM}{\mathcal{M}}
\newcommand{\mcS}{\mathcal{S}}
\newcommand{\mcD}{\mathcal{D}}
\newcommand{\mcG}{\mathcal{G}}
\newcommand{\mcH}{\mathcal{H}}

\newcommand{\mcP}{\mathcal{P}}
\newcommand{\mcR}{\mathcal{R}}
\newcommand{\mcPa}{\mathcal{P}^\alpha}

\newcommand{\mcXa}{\mathcal{X}^\alpha}
\newcommand{\mcV}{\mathcal{V}^\alpha}
\newcommand{\mcY}{\mathcal{Y}^\alpha}

\newcommand{\mcU}{\mathcal{U}}
\newcommand{\mcQ}{\mathcal{Q}}

\newcommand{\mfG}{\mathfrak{G}}

\newcommand{\bS}{\mcS^\alpha}

\newcommand{\infnorm}[1]{\left\| #1 \right\|_{\infty}} 
\newcommand{\rndP}[1]{\left(#1 \right)} 

\newcommand{\tildes}[1]{\left|{#1}\right|} 

\newtheorem{theorem}{Theorem}[section]
\newtheorem*{theorem*}{Theorem}

\newtheorem{lemma}[theorem]{Lemma}

\theoremstyle{definition}

\newtheorem{remark}{Remark}[section]

\usepackage{fullpage}

\title{Sharp Time Decay and Scattering of Small Data Solutions to the Relativistic Vlasov-Maxwell System}
\author{Grace Mattingly, Stephen Pankavich}
\address{Department of Applied Mathematics and Statistics, Colorado School of Mines, Golden, CO 80401.}
\email{gmattingly@mines.edu}
\email{pankavic@mines.edu}

\author{Jonathan Ben-Artzi}
\address{Dipartimento di Matematica, Universit\`a degli Studi di Roma ``Tor Vergata'', Via della Ricerca Scientifica 1, 00133 Rome, Italy.}
\email{benartzi@mat.uniroma2.it}

\date{\today}
\thanks{The first author was supported by an NSF Graduate Research Fellowship. The second author was supported in part by NSF grant DMS-2107938. The last author acknowledges the MUR Excellence Department Project MatMod@TOV awarded to the Department of Mathematics, University of Rome Tor Vergata, CUP E83C23000330006}

\begin{document}

%
%
%
%
%
%
%

\maketitle

\begin{abstract}
A multispecies, collisionless plasma is modeled by the relativistic Vlasov-Maxwell system. 
Assuming the plasma is globally neutral, we show that solutions can be constructed with arbitrarily fast, polynomial rates of decay, depending upon the cancellation properties of the limiting spatial averages, as well as the masses and charges of the particles.
In doing so, we establish a countably infinite number of asymptotic profiles for the charge and current density, electric and magnetic fields, and their derivatives, each of which is necessarily realized by a sufficiently smooth solution.
Our approach relies upon a refined field representation that features self-similar behavior away from the boundary of the light cone.
In each case we establish a linear scattering result in $L^\infty$ for every particle distribution function, namely we show that they converge as $t \to \infty$ along the transported spatial characteristics at increasingly faster rates.
In the case that charge cancellation does not occur, our methods can be further extended to demonstrate polyhomogeneous expansions with logarithmic corrections.
\end{abstract}

%
%
%
%
%
%
%
%
%
%
%

\section{Introduction}
\label{sec:intro}

We consider the motion of a collisionless plasma, namely a completely ionized gas that is sufficiently dilute to neglect collisional effects and sufficiently hot to consider relativistic velocities, that is comprised of a large number of charged particles. If there are $N\in\N$ distinct species of particles within the plasma, then those of the $\alpha$th species (for $\alpha = 1, ..., N$)
possess a charge $e_\alpha \in \mathbb{R}$, mass $m_\alpha > 0$, and are distributed in phase space at time $t \geq 0$ according to the function $f^\alpha(t,x,v)$ where $x \in \bfR^3$ represents particle position and $v\in \bfR^3$ particle velocity. 
The time evolution of the plasma is described by the multispecies, relativistic Vlasov-Maxwell system.
In particular, the motion of particles is governed by the Vlasov equation
\begin{equation}
\label{Vlasov}
\tag{RVM$_1$}
\partial_{t}f^\alpha+v_\alpha (p)\cdot\nabla_{x}f^\alpha + e_\alpha \left ( E + \frac{v_\alpha(p)}{c} \times B \right ) \cdot\nabla_{p}f^\alpha=0,
\end{equation}
where $c$ is the speed of light and the relativistic velocity of the $\alpha$th species is given by
\begin{equation}
\label{Rel_vel-c}
\tag{RVM$_2$}
v_\alpha(p) = \frac{p}{\sqrt{m_\alpha^2 + |p|^2/c^2}}.
\end{equation}
We note that this function is smooth and invertible for $|q| < c$, with inverse function
\begin{equation}
\label{Rel_vel_inv}
v^{-1}_\alpha(q) = \frac{m_\alpha q}{\sqrt{1- |q|^2/c^2}}.
\end{equation}
The distribution function gives rise to charge and current densities, defined by
\begin{equation}
\label{rhoj}
\tag{RVM$_3$}
\rho(t,x) =4\pi \sum_{\alpha=1}^N e_\alpha  \int_{\bfR^3} f^\alpha(t,x,p) \,dp \qquad \mathrm{and} \qquad j(t,x) = 4\pi \sum_{\alpha=1}^N e_\alpha \int_{\bfR^3} v_\alpha(p) f^\alpha(t,x,p)  \ dp,
\end{equation}
and these densities generate electromagnetic fields $E(t,x)$ and $B(t,x)$ according to Maxwell's equations
\begin{equation}
\label{Maxwell}
\tag{RVM$_4$}
\begin{aligned}
& \partial_t E - c\nabla \times B = - j, \hspace{.76cm} \nabla \cdot E = \rho\\
& \partial_t B + c\nabla \times E = 0, \hspace{1cm} \nabla \cdot B = 0.
\end{aligned}
\end{equation}
The equations \eqref{Vlasov}-\eqref{Maxwell}, which we will group together and denote by (\hyperref[Vlasov]{RVM}) are then supplemented by the initial conditions 
$f^\alpha(0,x,p) = f^\alpha_0(x,p)\geq0$ for each $\alpha = 1, ..., N$,
and $E(0,x) = E_0(x)$, $B(0,x) = B_0(x)$,
which must further satisfy the compatibility conditions
\begin{equation}
\label{compat}
\nabla \cdot E_0(x) = \rho(0,x), \qquad \nabla \cdot B_0(x) = 0, \qquad \mathrm{and} \qquad \int_{\bfR^3} \rho(0,x) \ dx = 0.
\end{equation}
For each species, the particle number is conserved, namely
$$ \iint_{\bfR^3\times\bfR^3} f^\alpha(t,x,p) \ dpdx =  \iint_{\bfR^3\times\bfR^3} f^\alpha_0(x,p) \ dpdx =: \mcM^\alpha$$
for all $t \geq 0$, with the overall net charge of the system given by
$$\mcM_{\mathrm{net}} = \sum_{\alpha = 1}^N e_\alpha \mcM^\alpha = 0 $$
due to the global neutrality assumption within the last equality of \eqref{compat}.

While it is well-known that given sufficiently small and smooth initial data with compact support in phase space, \eqref{Vlasov}-\eqref{Maxwell} possesses a global-in-time smooth solution \cite{BigorgneRVM,GS}, the large time behavior of solutions has only recently become better understood \cite{Bigorgne, Pan-BA2025}. Indeed, \cite{Bigorgne} used vector field methods to determine the asymptotic behavior of small data solutions to \eqref{Vlasov}-\eqref{Maxwell} from sufficiently regular initial data without the need for compact support, while \cite{Pan-BA2025} used more classical PDE estimates to arrive at a similar result for $C^2$ data, with improved decay rates
in the case that the limiting charge density vanishes.
Aspects of the proof in \cite{Pan-BA2025}, specifically the representation of the electric and magnetic fields, were then simplified within \cite{Breton}.
Additionally, the more recent study \cite{BigorgneScattering} further constructed a small data scattering map to connect solutions as $t \to -\infty$ with initial data at $t=0$ and the forward limiting behavior of quantities within the system as $t \to \infty$.
The analogous result for Vlasov-Poisson was constructed within \cite{Flynn}.
Finally, \cite{Breton2} recently proved that small data solutions of , \eqref{Vlasov}-\eqref{Maxwell} do not verify linear scattering in $L^1$.
Of course, the global-in-time existence and large time asymptotic behavior of classical solutions to \eqref{Vlasov}-\eqref{Maxwell} for data of arbitrary size also remains a crucial unsolved problem in the field.

Much more is known for the Vlasov-Poisson system, which is the classical limit of the relativistic Vlasov-Maxwell system. 
In particular, solutions are known to decay in special cases, including small data \cite{BD, HRV, Ionescu, Pankavich2022, Smulevici}, monocharged and spherically-symmetric data \cite{Horst, Pankavich2020}, and lower-dimensional settings \cite{BKR, BMP, GPS, GPS2, GPS4, GPS5, Sch}.
An understanding of the intermediate asymptotic behavior of spherically-symmetric solutions of the relativistic Vlasov-Maxwell system was obtained in \cite{BCP2} with the analogous result for Vlasov-Poisson occurring in \cite{BCP1}, namely that there are solutions for which the $L^\infty$ norms of the charge density and electric field can be made arbitrarily large at some later, finite time regardless of their initial size.
For general background concerning \eqref{Vlasov}-\eqref{Maxwell} and associated kinetic equations, we refer the reader to \cite{Glassey}. 

Because of the dispersive properties imparted upon the system by the relativistic transport operator $\partial_t + v_\alpha(p) \cdot \nabla_x$ and the momentum averages inherent to observable quantities, one generally expects that the charge and current densities and electromagnetic fields decay to zero like $t^{-3}$ and $t^{-2}$, respectively, as $t \to \infty$ for all smooth solutions.
That being said, previous results for the Vlasov-Poisson system \cite{Mattingly2025,Pankavich2021}, and more recently \cite{Pan-BA2025} for the relativistic Vlasov-Maxwell system, have demonstrated that neutral plasmas ($\mcM = 0$) can sustain decay rates that are faster than those generated by dispersion should the limiting charge density vanish due to charge cancellation.
Conversely, non-neutral plasmas ($\mcM \neq 0$) cannot experience decay that is faster than the rates provided by the dispersive mechanisms in the system \cite{Pankavich2021}.
Within the current paper, we significantly extend this idea to small data solutions of \eqref{Vlasov}-\eqref{Maxwell}, demonstrating that faster decay rates of any polynomial order can be attained by solutions should successive limiting quantities within the system vanish.
In this way, we demonstrate that the relativistic Vlasov-Maxwell system allows for infinitely many distinct regimes of asymptotic behavior, though the scattering behavior of the distribution function can only assume two distinct states (linear and modified), albeit with accelerated rates of convergence.

\subsection{Notation and Approach}
For simplicity, we normalize the speed of light $c = 1$ throughout. Doing so does not affect our results in any way.
In particular, this simplifies the relativistic velocity function, its gradient, and inverse determinant to
\begin{equation}
\label{Rel_vel-c=1}
\tag{RVM$_2$}
v_\alpha(p) = \frac{p}{\sqrt{m_\alpha^2 + |p|^2}},
\end{equation}
$$\mathbb{A}_\alpha(p) := \nabla v_\alpha(p),$$
and 
$$\mcD_\alpha(p) := \left | \det \mathbb{A}_\alpha(p) \right |^{-1}$$
for $\alpha = 1, ..., N$, respectively.
With this, the characteristics associated to (\hyperref[Vlasov]{RVM}) are defined by $\mcXa(t, \tau, x, p)$ and $\mcPa(t, \tau, x, p)$ for all $t, \tau \geq 0$ and $x,p \in \bfR^3$ as solutions of the system
\begin{equation}
\label{char}
\left \{
\begin{aligned}
&\dot{\mcXa}(t, \tau, x, p)= v_\alpha \left ( \mcPa(t, \tau, x, p) \right )\\
&\dot{\mcPa}(t, \tau, x, p)=  K^\alpha \left (t,\mcXa(t, \tau, x, p), \mcPa(t, \tau, x, p) \right)
\end{aligned}
\right.
\end{equation}
with initial conditions
$\mcXa(\tau, \tau, x, p) = x$ and
$\mcPa(\tau, \tau, x, p) = p,$
where the Lorentz force on the $\alpha$th species is defined by
$$K^\alpha(t,x,p) = e_\alpha \left (E(t, x) +\frac{v_\alpha(p)}{c}\times B(t, x) \right ).$$

Due to the appearance of the relativistic transport operator, our approach first recasts the system along the associated free flow via the transformation
$$g^\alpha (t, x, p) = f^\alpha(t, x + v_\alpha(p)t, p)$$
for $\alpha = 1, ..., N$
so that the Vlasov equation becomes

	\begin{equation}
	\tag{RVM$_g$}
	\label{RVMg}
	\partial_{t}g^\alpha -  \frac{e_\alpha}{m_\alpha}t \mathbb{A}_\alpha(p)K^\alpha(t,x+v_\alpha(p)t, p)\cdot \nabla_{x}g^\alpha+\frac{e_\alpha}{m_\alpha}K^\alpha(t,x+v_\alpha(p)t, p) \cdot\nabla_{p}g^\alpha=0.
	\end{equation}
where we denote the operator acting on $g^\alpha$
	\begin{equation}
	\label{eq:vlasov-op-g}
	\mcV_g 
	=
	\partial_{t} + \frac{e_\alpha}{m_\alpha}  K^\alpha(t,x+v_\alpha(p)t, p) \cdot \left( - t \A_\alpha(p) \nabla_{x} + \nabla_{p} \right).
	\end{equation}

Additionally, we transform Maxwell's equations into linear wave equations with source dependence on derivatives of the charge and current density, which yields, for $i=1,2,3$,
	\begin{equation}
	\label{eq:wave-e}
	\Box_{t,x} E^i(t,x) = - \partial_{x_i} \rho(t,x) - \partial_t j^i(t,x)
	\end{equation}
	\begin{equation}
	\label{eq:wave-b}
	\Box_{t,x} B^i(t,x) = \left (\nabla \times j \right)^i(t,x),
	\end{equation}
where for the \emph{box operator} we employ the convention
	\[
	\Box_{t,x}=\partial_t^2-\Delta_x.
	\]
Upon changing variables (via $p \mapsto y = x + v_\alpha(p)t$) in this reference frame, the charge density is expressed as
	\begin{equation}
	\label{rhop}
	\rho(t,x) 
	= 4\pi t^{-3}\sum_{\alpha=1}^N e_\alpha \int g^\alpha \left (t, y, v_\alpha^{-1} \left (\frac{x-y}{t}  \right ) \right ) \mcD_\alpha\left (v_\alpha^{-1} \left (\frac{x-y}{t}  \right ) \right ) dy.
	\end{equation}
From this representation, the large time behavior becomes more transparent, as one expects
$$\rho(t,x) \sim 4\pi t^{-3} \sum_{\alpha=1}^N e_\alpha \mcD_\alpha\left (v_\alpha^{-1} \left (\frac{x}{t}  \right ) \right ) \int g^\alpha \left (t, y, v_\alpha^{-1} \left (\frac{x}{t}  \right ) \right )  dy$$
as $t \to \infty$, under the assumption that the spatial support of $g^\alpha$ remains uniformly bounded in time.
In order to better understand the higher-order corrections to this limit, we merely implement a multi-dimensional Taylor expansion up to order $\ell$ of the integrand around the point $\frac xt$, assuming that the distribution function is sufficiently smooth. To simplify notation, we denote
	\begin{equation}
	\label{Hrho}
	\mcH_\rho(t,y,q)
	= 
	\sum_{\alpha = 1}^N e_\alpha \mcD_\alpha \left (  v_\alpha^{-1}(q) \right ) g^\alpha \left (t, y, v_\alpha^{-1}(q) \right ) 
	\end{equation}
so that
$$\rho(t,x) =  4\pi t^{-3}\int \mcH_\rho \left (t,y, \frac{x-y}{t}\right ) dy.$$
Then, expanding to remove the $y$ dependence within the last argument, we find
$$\mcH_\rho \left(t,y, \frac{x-y}{t} \right)
=
\sum_{m=0}^{\ell} \sum_{|\delta| = m}  \frac{(-y)^\delta}{t^{m}\delta!} D^\delta_q\,\mcH_\rho\left(t,y,\frac{x}{t}  \right) + \sum_{|\delta| = \ell+1}  \frac{(-y)^\delta}{t^{\ell+1}\delta!}  D^\delta_q\,\mcH_\rho\left(t,y,\frac{x-\theta y}{t} \right)$$
for some $\theta \in [0,1]$. Here $\delta \in \mathbb{N}_0^3$ is a multi-index and we use the conventions that $\delta!=\delta_1!\delta_2!\delta_3!$ and $z^\delta=z_1^{\delta_1}z_2^{\delta_2}z_3^{\delta_3}$ for $z\in\R^3$.
Therefore, for any $\ell \in \mathbb{N}_0$ we arrive at 
	\begin{align*}
	t^3 \rho(t,x) 
	=&
	4\pi  \sum_{m=0}^{\ell} t^{-m} \sum_{|\delta| = m}  \frac{1}{\delta!} \int (-y)^\delta D^\delta_q\,\mcH_\rho\left(t,y,\frac{x}{t} \right) dy\\
	 &+
	4\pi t^{-{\ell-1}} \sum_{|\delta| = \ell+1}  \frac{1}{\delta!} \int (-y)^\delta D^\delta_q\,\mcH_\rho\left(t,y,\frac{x - \theta y}{t} \right) dy
	\end{align*}
for some $\theta \in [0,1]$.
Define for any multi-index $\delta \in \mathbb{N}_0^3$, the quantities
\begin{equation}
\label{eq:F-al}
F^{\alpha,\delta}(t, p) = \int (-y)^\delta g^\alpha(t,y,p) \ dy
 \end{equation}
and
\begin{equation}
\label{eq:A-l}
\mcA_{\delta}(t,q) = \sum_{\alpha=1}^N e_\alpha \mcD_\alpha \left ( v_\alpha^{-1}(q) \right ) F^{\alpha,\delta} \left (t, v_\alpha^{-1}(q) \right ),
\end{equation}
the latter of which mimics the structure of $\mcH_\rho(t,y,q)$ but incorporates spatial moments.
Moving the spatial powers and integration inside of velocity derivatives, this simplifies the expression for $\rho(t,x)$ to
\[
t^3 \rho(t,x) 
=
4\pi \sum_{m=0}^{\ell} t^{-m} \sum_{|\delta| = m}  \frac{1}{\delta!} D^\delta_q \mcA_\delta \left(t,\frac{x}{t} \right) +
4\pi t^{-{\ell-1}} \sum_{|\delta| = \ell+1}  \frac{1}{\delta!} \int (-y)^\delta D^\delta_q\,\mcH_\rho\left(t,y,\frac{x - \theta y}{t} \right) dy.
\]
Upon defining
\begin{equation}
\label{eq:rho-l}
\rho_m(t,x)
=
4\pi \sum_{|\delta| = m}  \frac{1}{\delta!} D^\delta_q \mcA_\delta \left(t,\frac{x}{t} \right)
=
4\pi \sum_{|\delta| = m}  \frac{1}{\delta!} D^\delta_q \biggl ( \mcA_\delta \left(t,q \right) \biggr ) \biggr |_{q = \frac{x}{t}}
\end{equation}
this becomes
\begin{equation}
\label{rho_exp}
t^3 \rho(t,x) 
=
\sum_{m=0}^{\ell} t^{-m} \rho_m(t,x) +
4\pi t^{-{\ell-1}} \sum_{|\delta| = \ell+1}  \frac{1}{\delta!} \int (-y)^\delta D^\delta_q\,\mcH_\rho\left(t,y,\frac{x - \theta y}{t} \right) dy
\end{equation}
with the final term representing the error in the approximation of the charge density expansion.

Analogously, the current density can be represented as
	\begin{equation}
	\label{jp}
	\begin{split}
	j(t,x) 
	&=
	 4\pi \sum_{\alpha=1}^N e_\alpha  \int_{\bfR^3} v_\alpha(p)f^\alpha(t,x,p) \,dp\\
	& =
	 4\pi t^{-3}\sum_{\alpha=1}^N e_\alpha \int \frac{x-y}{t} g^\alpha \left (t, y, v_\alpha^{-1} \left (\frac{x-y}{t}  \right ) \right ) \mcD_\alpha\left (v_\alpha^{-1} \left (\frac{x-y}{t}  \right ) \right ) dy.
	\end{split}
	\end{equation}
Hence, proceeding in the same fashion we define
	\begin{equation}
	\label{Hj}
	\mcH_j(t,y,q)
	= q \mcH_\rho(t,y,q) = 
	\sum_{\alpha = 1}^N e_\alpha q \mcD_\alpha \left (  v_\alpha^{-1}(q) \right ) g^\alpha \left (t, y, v_\alpha^{-1}(q) \right ) 
	\end{equation}
so that
$$j(t,x) =  4\pi t^{-3}\int \mcH_j \left (t,y, \frac{x-y}{t}\right ) dy.$$
Ultimately, this yields the expansion (for some $\theta \in [0,1]$)
$$
t^3 j(t,x) 
=
\sum_{m=0}^{\ell} t^{-m} j_m(t,x) +
4\pi t^{-{\ell-1}} \sum_{|\delta| = \ell+1}  \frac{1}{\delta!} \int (-y)^\delta D^\delta_q\,\mcH_j\left(t,y,\frac{x - \theta y}{t} \right) dy
$$
for any $\ell \in \mathbb{N}_0$ where
\begin{equation}
\label{eq:j-l}
j_m(t,x)
=
4\pi \sum_{|\delta| = m}  \frac{1}{\delta!} D^\delta_q \biggl ( q \mcA_\delta \left(t,q \right) \biggr ) \biggr |_{q = \frac{x}{t}}.
\end{equation}

Finally, an expansion of the electric and magnetic fields is also necessary.
For brevity, we focus on the electric field and note that the construction for the magnetic field is identical upon altering the associated kernel.
Recalling the framework for the decomposition of the fields in \cite{GS2} and small data solutions in \cite{GS}, we let $\beta > 0$ represent the maximal momentum on the support of all distribution functions.
As in \cite{Pan-BA2025}   we denote
	\begin{equation}
	\label{eq:gamma}
	\gamma = \max \left \{ \frac{1}{2}, \frac{2\beta}{\sqrt{1 + 4\beta^2}} \right \}
	\end{equation}
to capture the maximal velocity of particle propagation inside the light cone.
Below, in \eqref{eq:glassey-strauss-decomp}, we shall see that the electric field has the convenient decomposition $E(t,x) = E_T(t,x) + E_S(t,x) + E_\mathrm{data}(t,x)$, where the so-called \emph{tangential} part $E_T$ remains dominant for large times. Its precise expression appears in \eqref{ET_GS}. Considering that expression, we define its kernel as
$$\kappa_T^E(\omega,q)=\frac{(\omega + q)(1-|q|^2)}{(1+q\cdot\omega)^2}$$
so that for $t$ sufficiently large and $|x| \leq \gamma t$
$$t^2 E(t,x) \sim \int_{|u|\leq \mcU} \int \mcH_T^E \left (t,u,z, \frac{x+ut-z}{t(1-|u|)} \right ) \, dz \, \frac{du}{|u|^2(1-|u|)^3}$$
where we use the standard notation for a unit-length vector $\hat{u}=\frac{u}{|u|}$,  where the constant $\mcU<1$ is given by
	\begin{equation}
	\label{eq:mcU}
	\mcU = \frac{1}{2} + \frac{\gamma}{\gamma + 1},
	\end{equation}
and
$$\mcH_T^E(t,u,z,q) = \kappa^E_T{\left(\hat{u},q\right)} \mcH_\rho(t(1-|u|), z, q).$$
Expanding to remove the $z$ dependence within the last argument of $\mcH_T^E$, we find
\begin{align*}
\mcH_T^E \left (t,u,z, \frac{x+ut-z}{t(1-|u|)} \right )
& =
\sum_{m=0}^{\ell} \frac{1}{[t(1 -|u|)]^m}\sum_{|\delta| = m}  \frac{(-z)^\delta}{\delta!} D^\delta_q\,\mcH_T^E \left (t,u,z, \frac{x+ut}{t(1-|u|)} \right )\\
& \qquad + \frac{1}{[t(1 -|u|)]^{\ell+1}}\sum_{|\delta| = \ell+1}  \frac{(-z)^\delta}{\delta!}  D^\delta_q\,\mcH_T^E \left (t,u,z, \frac{x+ut-\theta z}{t(1-|u|)} \right )
\end{align*}
for some $\theta \in [0,1]$.
From this, we construct for any $\ell \in \mathbb{N}_0$
\begin{equation}
\label{eq:W-l}
W^E_\ell(t,u, q)= \frac{1}{(1 -|u|)^\ell}\sum_{|\delta| = \ell} \frac{1}{\delta!} D_q^\delta \biggl (\kappa_T^E(\hat{u},q) \mcA_\delta (t(1 -|u|),q) \biggr )
\end{equation}
for $t \geq 0$, $|u| \leq \mcU < 1$, and $|q| < 1$, which mimics the structure of $\mcH_T^E(t, u, z, q)$ and, as before, moves the spatial moments from the expansion within the $\mcA_\delta$ term. 
Finally, this induces an approximating electric field via
\begin{equation}
\label{eq:E-l}
E_\ell(t,x) =  \int_{|u|\leq \mcU} W^E_\ell\rndP{t, u, \frac{x + ut}{t(1 - |u|)}} \, \frac{du}{|u|^2\rndP{1-|u|}^3}
\end{equation}
with a similar construction for the magnetic field.
The $\ell$th approximation of the electric field is then
$$
t^2 E(t,x) 
=
\sum_{m=0}^{\ell} t^{-m} E_m(t,x)
$$
with an error term that can be expressed in terms of previously defined quantities and is $\mathcal{O}\left (t^{-\ell+1}\right )$.

It will be shown that $F^{\alpha, \delta}(t, p)$ converges as $t \to \infty$ to
	\begin{equation}
	\label{eq:F-al-inf}
	F^{\alpha, \delta}_\infty(p)
	=  \int (-y)^\delta f^\alpha_\infty(y,p) \ dy,
	 \end{equation}
where $f^\alpha_\infty$ is smooth with compact support, and thus
$\mcA_\delta(t,q)$ will converge as $t \to \infty$ to
	\begin{equation}
	\label{eq:A-l-inf}
	\mcA_{\delta,\infty}(q)
	=
	\sum_{\alpha=1}^N e_\alpha \mcD_\alpha \left ( v_\alpha^{-1}(q) \right ) F^{\alpha,\delta}_\infty \left (v_\alpha^{-1}(q) \right )
	\end{equation}
for $|q| < 1$, as the functions $\mcD_\alpha(p)$ and $v_\alpha^{-1}(q)$ are well-behaved.
Upon controlling derivatives of these quantities, we will show that $\rho_\ell(t,x)$, $j_\ell(t,x)$, $W^E_\ell(t,u,q)$, and $E_\ell(t,x)$ converge, respectively, to	
	\begin{equation}
	\label{eq:rho-l-inf}
	\rho_{\ell,\infty}(q)
	=
	4\pi \sum_{|\delta| = \ell}  \frac{1}{\delta!} D^\delta_q \mcA_{\delta,\infty} \left(q\right),
	\end{equation}
	\begin{equation}
	\label{eq:j-l-inf}
	j_{\ell,\infty}(q)
	=
	4\pi \sum_{|\delta| = \ell}  \frac{1}{\delta!} D^\delta_q \biggl (q\mcA_{\delta,\infty} \left(q\right) \biggr ),
	\end{equation}
	\begin{equation}
	\label{eq:W-l-inf}
	W^E_{\ell,\infty}(u, q)= \frac{1}{(1 -|u|)^\ell}\sum_{|\delta| = \ell} \frac{1}{\delta!} D_q^\delta \left [\kappa_T^E(\hat{u},q) \mcA_{\delta,\infty} (q) \right ],
	\end{equation}
	\begin{equation}
	\label{eq:E-l-inf}
	E_{\ell,\infty}\rndP{q}=  \int_{|u|\leq \mcU} W^E_{\ell,\infty}\rndP{u, \frac{q + u}{1 - |u|}} \, \frac{du}{|u|^2\rndP{1-|u|}^3},
	\end{equation}
	\begin{equation}
	\label{eq:WB-l-inf}
	W^B_{\ell,\infty}(u, q)= \frac{1}{(1 -|u|)^\ell}\sum_{|\delta| = \ell} \frac{1}{\delta!} D_q^\delta \left [\kappa_T^B(\hat{u},q) \mcA_{\delta,\infty} (q) \right ],
	\end{equation}
and
	\begin{equation}
	\label{eq:B-l-inf}
	B_{\ell,\infty}\rndP{q}=  \int_{|u|\leq \mcU} W^B_{\ell,\infty}\rndP{u, \frac{q + u}{1 - |u|}} \, \frac{du}{|u|^2\rndP{1-|u|}^3}
	\end{equation}
with these limits occurring along $q = \frac{x}{t}$ and where $f_\infty^\alpha$ are the limiting functions introduced in Theorems \ref{oldT1} and \ref{oldT2}.

As the small data solutions we will study remain bounded on any finite time interval, we are essentially concerned only with large time estimates, and thus we use the notation
$$A(t) \lesssim B(t)$$
to represent the statement that there is $C > 0$, independent of $t$,
such that
$A(t) \leq CB(t)$
for $t$ sufficiently large.
When a specific constant is desired, for instance when there is $\gamma >0$  such that $|x| \leq \gamma t$ for $t$ sufficiently large, we will include the constant in this notation, namely
\begin{equation}
\label{xlesssim}
|x| \lesssim \gamma t.
\end{equation}
Additionally, $C$ will denote a positive constant (independent of the solution) that may depend upon initial data and can change from line to line. Finally, the notation 
	\[A(t) \sim B(t)\]
means that both $A(t) \lesssim B(t)$ and $B(t) \lesssim A(t)$ hold.

\subsection{Previous Results}

\subsubsection{Small Data Solutions}
We first summarize some previous properties of small data solutions, which were obtained in \cite{GS}. We denote by $\Gamma_L$ the open ball of radius $L$ in $\R^3$.

\begin{theorem}
\label{T0}
For any $L > 0$ there exist $\epsilon_0 > 0$ and $\beta > 0$ with the following property.
Let $f^\alpha_0 \in C^1$, $\alpha=1,\dots,N$, be non-negative functions supported on $\overline{\Gamma}_L \times \overline{\Gamma}_L$.
Let $E_0, B_0 \in C^2$ be supported on $\overline{\Gamma}_L$ satisfying the compatibility conditions \eqref{compat}.
If the initial data satisfy
$$\sum_{\alpha = 1}^N \Vert f^\alpha_0 \Vert_{C^1} + \Vert E_0 \Vert_{C^2} + \Vert B_0 \Vert_{C^2} \leq \epsilon_0,$$
then there exists a unique classical solution of (\hyperref[Vlasov]{RVM}) for all $x, p \in \bfR^3$ and $t \geq 0$ such that
$$f^\alpha(t,x,p) = 0 \qquad \mathrm{for} \qquad |p| \geq \beta$$
for all $\alpha = 1, ..., N$, $t\geq 0$, and $x \in \bfR^3$.
Furthermore, 
$|E(t,x)| + |B(t,x)| = 0$ for $|x| > t + L$
and we have the estimates
$$s - |\mcXa(s,t,x,p)| + 2L \geq \frac{s + L}{2 (1+\beta^2)}$$
for all $s, t \geq 0$,  $x,p \in \bfR^3$
and

$$|E(t,x)| + |B(t,x)| \leq \frac{C\epsilon_0}{(t+1)\left (t - |x | + 2L \right )},$$
$$|\nabla_x E(t,x)| + |\nabla_x B(t,x)| \leq \frac{C\epsilon_0 \ln(t+2)}{(t+1)\left (t - |x | + 2L \right )^2},$$
$$|\nabla^2_x E(t,x)| + |\nabla^2_x B(t,x)| \leq \frac{C\epsilon_0 \ln(t+2)}{(t+1)\left (t - |x | + 2L \right )^3},$$
$$\Vert \nabla_x f^\alpha(t) \Vert_\infty \leq C \qquad \mathrm{and} \qquad \Vert \nabla_p f^\alpha(t) \Vert_\infty \leq C(1+t),$$
and
$$|\rho(t,x)|  + | j(t,x)|  \leq C\epsilon_0 (1+t)^{-3}$$
for some $C>0$ and all $t \geq 0$, $x \in \bfR^3$.
\end{theorem}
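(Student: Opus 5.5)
This is the Glassey--Strauss small data theorem, so I would reproduce their continuity (bootstrap) argument. The plan runs in four steps.

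\textbf{Step 1 (bootstrap setup).} Take local existence and the continuation criterion for granted: a classical solution extends as long as the momentum support stays bounded. Fix $\beta$, for instance $\beta = 2L+1$. Let $T^*$ be the supremum of times $T$ for which the following hold on $[0,T]$:
\[
f^\alpha(t,x,p)=0 \ \text{ for } |p|\ge \beta,
\qquad
|E|+|B| \le \frac{2C_1\epsilon_0}{(t+1)(t-|x|+2L)}.
\]

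\textbf{Step 2 (field estimates under the bootstrap).} On $[0,T^*)$ all particle speeds satisfy $|v_\alpha|\le \beta/\sqrt{m_\alpha^2+\beta^2}<1$. The support of $f$ therefore lies in the cone $|x|\le L+\hat v t$ with $\hat v<1$, which gives $t-|x|+2L \gtrsim (1+t)/(1+\beta^2)$ on that support. The same reasoning applied along characteristics yields the stated lower bound on $s-|\mcXa(s,t,x,p)|+2L$. Finite propagation speed for the wave equations \eqref{eq:wave-e}, \eqref{eq:wave-b}, with data supported in $\overline{\Gamma}_L$, gives $E=B=0$ for $|x|>t+L$.

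Next use the Glassey--Strauss representation $E=E_{\mathrm{data}}+E_T+E_S$, and similarly for $B$. The data term decays like $\epsilon_0/t$ and is localized near $|x|=t\pm L$, so it is bounded by $C\epsilon_0(t+1)^{-1}(t-|x|+2L)^{-1}$. The tangential part $E_T$ involves only $f$. Since $\|f\|_\infty\le\epsilon_0$ (the Vlasov flow is measure preserving) and the support is bounded in $p$ and lies inside the cone, integrating the kernel $|y-x|^{-2}$ over the backward cone gives the same bound. The $E_S$ part carries $K f$ with an $|x-y|^{-1}$ kernel, so it is quadratic: $\epsilon_0\cdot\epsilon_0$ times an integral estimated via the bootstrap bound. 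Altogether this gives $C_1\epsilon_0+C\epsilon_0^2$, which closes the field bound for small $\epsilon_0$.

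\textbf{Step 3 (momentum support).} Along characteristics, $|\dot{\mcPa}|\le |K|\lesssim \epsilon_0 (1+s)^{-2}$, because the particles sit where $s-|x|\gtrsim s$. This is integrable in time, so $|\mcPa|\le L+C\epsilon_0<\beta$. Both bootstrap assumptions therefore improve strictly, and $T^*=\infty$. Using the same inside-cone support, the density bounds follow from $|\rho|+|j|\lesssim \epsilon_0(1+t)^{-3}$; concretely, from \eqref{rhop} together with the fact that $g^\alpha$ has spatial support bounded uniformly in time.

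\textbf{Step 4 (derivative bounds) --- main obstacle.} The derivative bounds are the hardest part. Differentiating the characteristics gives $\|\nabla_x f\|_\infty\le C$ via Gronwall, provided $\int\|\nabla_x K\|\,ds<\infty$ on the support. For $\nabla_p$, the factor $t\nabla_x$ appearing in \eqref{RVMg} produces the $C(1+t)$ growth. To estimate $\nabla_x E$, I would differentiate the representation and move the derivatives onto $f$ using Glassey--Strauss's $T,S$ operators. The $t-|x|$ weights then give the extra powers $(t-|x|+2L)^{-2}$ and $(t-|x|+2L)^{-3}$, while the time integral of $(1+s)^{-1}$ produces the $\ln(t+2)$ factor. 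Closing this requires a second bootstrap coupling $\|\nabla_x f\|$ and $\|\nabla_x K\|$; it is the step needing the careful singular-integral estimates near $y=x$. I would split the integral into $|y-x|<\epsilon$ and $|y-x|\ge\epsilon$, with $\epsilon$ chosen depending on $t$, which produces the logarithm. The second derivatives then follow by iterating the same scheme with $C^2$ field data.
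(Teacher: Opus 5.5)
The paper does not prove this theorem; it quotes it from Glassey--Strauss \cite{GS}. Measured against their argument, your outline has a genuine gap in the order of the bootstrap.

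In Step 2 you close the field bound using only $\|f\|_\infty\le\epsilon_0$, the momentum bound and the cone support, but these give no decay. The tangential kernel has size $|y-x|^{-2}$, so your estimate reads $|E_T(t,x)|\lesssim\int_{|y-x|\le t}\bigl(\int f(t-|y-x|,y,p)\,dp\bigr)|y-x|^{-2}\,dy$. With merely $\int f\,dp\lesssim\epsilon_0$ on $|y|\le L+\hat v(t-|y-x|)$, this is of order $\epsilon_0 t$ at $x=0$. Likewise, $E_S$ only comes out $O(\epsilon_0^2)$. The $t^{-2}$ decay comes entirely from the pointwise dispersive bound $\int f(\tau,y,p)\,dp\lesssim\epsilon_0(1+\tau)^{-3}$. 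That bound must therefore be available inside the bootstrap, not derived afterwards.

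Your Step 3 derivation of it is also wrong: $g^\alpha$ does not have spatial support bounded uniformly in time. Under $|K^\alpha|\lesssim\epsilon_0 s^{-2}$ along characteristics, \eqref{gchar} gives only $|\mcY(t)|\lesssim L+\epsilon_0\ln t$ (as in Lemma \ref{Xsupp}). Theorem \ref{oldT1}(d) shows this logarithmic drift actually occurs whenever $K^\alpha_{0,\infty}\not\equiv 0$. Inserted into \eqref{rhop}, this yields only $\epsilon_0t^{-3}(1+\epsilon_0\ln t)^3$. The logarithmic loss then feeds back through $E_T$ into $K^\alpha$ and into the drift, so the argument does not close.

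The missing ingredient is the Glassey--Strauss dispersion (Jacobian) estimate. Write $\int f^\alpha(t,x,p)\,dp=\int f^\alpha_0\bigl(\mcXa(0,t,x,p),\mcPa(0,t,x,p)\bigr)\,dp$ and change variables $p\mapsto\mcXa(0,t,x,p)$. On $|p|\le\beta$ one has $\partial_p\mcXa(0,t,x,p)=-t\bigl(\mathbb{A}_\alpha(p)+O(\epsilon_0)\bigr)$. This gives injectivity and $|\det\partial_p\mcXa(0,t,x,p)|\gtrsim t^3$, hence $\int f^\alpha\,dp\lesssim\epsilon_0 t^{-3}$.

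Deriving that Jacobian bound from the linearized characteristic system requires $\int_0^t s\,|\nabla_xK^\alpha(s,\mcXa(s))|\,ds\lesssim\epsilon_0$. It also needs $\int_0^t|\nabla_pK^\alpha|\,ds\lesssim\epsilon_0$, which follows from the bound on $B$. In other words, you need the gradient estimate $|\nabla_x(E,B)|\lesssim\epsilon_0\ln(t+2)(t+1)^{-1}(t-|x|+2L)^{-2}$ on the particle support.

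So your Step 4 is not a post-processing step. The first-derivative field bound must be among the bootstrap hypotheses from the start. The same goes for the bound on $\|\nabla_x f^\alpha\|_\infty$, which handles the singular $|y-x|^{-3}$ part of $\nabla_x E_T$ near $y=x$ and produces the logarithm. The quantities $K$, $\nabla_x K$ and the density decay must then be closed simultaneously.
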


Furthermore, for more regular initial data the decay of the fields can be extended to higher derivatives, as in \cite{Bigorgne}. This can also be displayed by the new field decomposition (Lemma \ref{Krep}) occurring later herein.
\begin{lemma}
\label{lem:glassey-strauss}
Assuming the initial distributions and fields are sufficiently smooth, the Glassey-Strauss theorem \cite{GS} further implies
	\begin{equation}
	|\nabla^k_x E(t,x)| + |\nabla^k_x B(t,x)| \lesssim \frac{ \ln(t+2)}{(t+1)\left (t - |x | + 2L \right )^{k+1}}.
	\end{equation}
for any $k \in \mathbb{N}$.
\end{lemma}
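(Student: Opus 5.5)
The plan is induction on $k$, following the Glassey--Strauss scheme that gives the cases $k=0,1,2$ in Theorem \ref{T0}. Throughout, the solution is the one produced by Theorem \ref{T0}. The distribution functions are supported in $|p|\le\beta$, and the spatial support lies in $|x|\le \gamma t + C$. In particular, the estimate on $s-|\mcXa|$ keeps particles a distance comparable to $s+L$ away from the light cone.

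\textbf{Step 1: field representation.} We use the Glassey--Strauss decomposition $E=E_{\mathrm{data}}+E_T+E_S$, and likewise for $B$. The data term solves the free wave equation with smooth, compactly supported data, so it vanishes for $t>L+|x|$ and for $|x|>t+L$. Its derivatives are bounded on the shell $|t-|x||\le L$. There $t-|x|+2L\sim 1$, so $|\nabla^k E_{\mathrm{data}}|\lesssim (1+t)^{-1}$, which is consistent with the claimed bound.

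\textbf{Step 2: derivatives of $E_T$ and $E_S$.} We write $E_T$ and $E_S$ as integrals over the backward cone, $y=x+r\omega$ with $r=t-s$, of kernels times $f$. Because of this parametrization, $\nabla_x^k$ falls entirely on $f(t-r,x+r\omega,p)$. We do not differentiate $f$ in $x$ directly. Instead we use the Glassey--Strauss identity that writes $\partial_{x_i}$ as a combination of $S=\partial_t+\hat\omega\cdot\nabla_x$ and the Vlasov operator. The Vlasov part is converted to $\nabla_p$-derivatives of the force term and integrated by parts in $p$. The $S$ part is integrated by parts along the cone. Each application of the identity produces a kernel factor $(1+v\cdot\omega)^{-1}$, which is bounded because $|v|\le\gamma<1$, together with a factor $|y-x|^{-1}$.

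We iterate this $k$ times. Any derivatives that remain on the field go into a Gronwall/bootstrap step. Lower-order terms of the form $\nabla^j E\,\nabla^{k-j}f$ are controlled using the induction hypothesis and bounds on $\nabla_x^j f$. Those bounds are obtained by differentiating the characteristics \eqref{char} $j$ times and using field-derivative decay. Since $\int_0^\infty (1+s)^{-1}(s-|\mcXa|+2L)^{-(j+1)}\,ds<\infty$ for $j\ge1$, this gives $\|\nabla^j_x f\|_\infty\le C$, and the same integrability gives $\nabla_p$-growth at most polynomial in $t$. The resulting singular integrals in $r$ are estimated exactly as in the $k=2$ case. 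The spatial support of $f$ sits at distance $\gtrsim s$ from the cone, and this produces the extra factor $(t-|x|+2L)^{-1}$ at each order. The $\ln(t+2)$ arises from the borderline $r$-integral $\int dr/r$ over $1\lesssim r\lesssim t$.

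\textbf{Main obstacle.} The hardest part is the bookkeeping of the iterated integration by parts. The boundary terms at $r=0$ require a cutoff near the vertex with a local Taylor argument. We also need to verify that no more than one logarithm is generated, i.e.\ that only one $r$-integral per term is borderline. As a cleaner alternative, we could use the refined representation Lemma \ref{Krep} referenced in the text. There $E$ is expressed through self-similar variables, so each $\nabla_x$ acts on $\frac{x+ut}{t(1-|u|)}$ and yields a factor $t^{-1}(1-|u|)^{-1}$. The singular region $|u|\to1$ then accounts for the powers of $(t-|x|+2L)^{-1}$ near the cone.
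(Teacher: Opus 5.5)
\textbf{There is a genuine gap.} The paper does not reprove this bound. It imports it from \cite{GS} and \cite{Bigorgne}, with only a remark that Lemma~\ref{Krep} also exhibits it. Your iterated Glassey--Strauss argument therefore has to stand on its own.

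\textbf{Where it fails.} Near the light cone, and for the first derivative, your geometric mechanism is the right one. The problem is the interior $|x|\le\gamma t$, where $t-|x|+2L\sim t$ and every derivative must gain a full factor $t^{-1}$; this is also the only region the paper uses, through $\mcK_\ell$. There the vertex $x$ sits inside the particle support, so $|y-x|$ is not bounded below, and the fact that particles stay at distance $\gtrsim s$ from the cone gives nothing. The vertex cutoff you anticipate produces local terms such as $\nabla_x^{k-1}\int c(v_\alpha(p))f^\alpha(t,x,p)\,dp$, which must be $O(t^{-2-k})$. Your inputs, $\|\nabla_x^jf^\alpha\|_\infty\le C$ and polynomial growth of momentum derivatives, give only $O(t^{-3})$. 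The reason is that the set of $p$ with $f^\alpha(t,x,p)\neq0$ has measure $\sim t^{-3}$, and nothing else is used. For instance, $g(t,z,p)=\phi(z)\psi(tp)$ satisfies $\|\nabla_xf\|_\infty\le C$ and $\|\nabla_pf\|_\infty\lesssim t$, yet its density is $t^{-3}$ times a non-spreading profile.

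\textbf{The missing idea is dispersive.} From $(\nabla_pg^\alpha)(t,x-v_\alpha(p)t,p)=\nabla_pf^\alpha(t,x,p)+t\,\mathbb{A}_\alpha(p)\nabla_xf^\alpha(t,x,p)$, you can trade each $\nabla_x$ for $t^{-1}\mathbb{A}_\alpha^{-1}\nabla_p$ acting on $g^\alpha$, and integrate the $\nabla_pf^\alpha$ pieces by parts in $p$. This yields $t^{-2-k}$ only if $\|\nabla_p^jg^\alpha(t)\|_\infty$, $j\le k-1$, grows at most logarithmically. Theorem~\ref{T0} alone gives only $\|\nabla_pg^\alpha\|_\infty\le\|\nabla_pf^\alpha\|_\infty+Ct\|\nabla_xf^\alpha\|_\infty\lesssim t$, which destroys the gain.

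\textbf{Consequences for the induction.} It has to carry bounds on $\nabla_p^jg^\alpha$ jointly with the field bounds; those bounds in turn need $\nabla_x^jK^\alpha$ along characteristics, as in Lemmas~\ref{Dpg} and \ref{D2g}. Keeping a single $\ln(t+2)$ then requires absorbing the logarithmic growth of $\nabla_p^jg^\alpha$, for example through the boundedness of $\int\nabla_p^jg^\alpha\,dz$. It is not just the borderline $\int dr/r$ you describe.

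\textbf{The fallback via Lemma~\ref{Krep}.} This uses the right mechanism: each $\nabla_x$ becomes $t^{-1}(1-|u|)^{-1}\nabla_q$ acting on $\mcH_\rho$, i.e.\ on $\nabla_pg^\alpha$. As written, however, it does not prove the statement either.
\begin{itemize}
\item The representation holds only for $|x|\lesssim\gamma t$ and $t$ large.
\item It uses $\mcR(t)\lesssim1$, which the paper has only when $\rho_{0,\infty}\equiv0$.
\item The $u$-integral is already truncated to $|u|\le\mcU<1$, so no singular region $|u|\to1$ remains to produce the near-cone factors.
\item It is circular as a direct argument. Lemma~\ref{Kconv} bounds $\nabla_x^kE$ through $\mcG_p^k$ and through $\mcK_k$ at earlier times. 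The paper's bounds on $\mcG_p^k$ (Lemma~\ref{Dkg}) take the rates of this very lemma as input. That route therefore needs an explicit continuity argument on $(\mcK_j,\mcG_p^j)$.
\end{itemize}
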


\subsubsection{Modified Scattering}
Next, we recall the large time convergence results of small data solutions, which were obtained separately within \cite{Bigorgne} and \cite{Pan-BA2025}, with a shorter proof of the latter theorem refined in \cite{Breton}.
In particular, we state the main results of those studies in our current notation.
First, the scaled densities and fields converge to limits based upon the limiting spatial average, namely

\begin{theorem}
\label{oldT1}
Under the conditions of Theorem \ref{T0}, the small data solutions satisfy
\begin{enumerate}[(a)]

\item 
For every $\alpha = 1, ..., N$
there exist $F^{\alpha,0}_\infty \in C_c^1(\bfR^3)$ 
such that the spatial average
$$F^{\alpha,0}(t,p) = \int f^\alpha(t,x, p) \ dx$$
satisfies 
$F^\alpha(t,p) \to F^\alpha_\infty(p)$ uniformly as $t \to \infty$, namely
$$\| F^{\alpha,0}(t) - F^{\alpha,0}_\infty \|_\infty \lesssim t^{-1}\ln^{5}(t),$$
and for every $\alpha = 1, ..., N$,
$$\int  F^{\alpha,0}_\infty(p) \ dp = \mcM^\alpha.$$

\item Due to the compact support of each $F_\infty^{\alpha,0}$, define the limiting charge density for $q \in \Gamma_1$ by
$$\rho_{0,\infty}(q) = \sum_{\alpha=1}^N e_\alpha \mcD_\alpha \left ( v_\alpha^{-1}(q) \right ) F^{\alpha,0}_\infty \left (v_\alpha^{-1}(q) \right )$$
and smoothly extend $\rho_{0,\infty}(q) = 0$ for $q \in \bfR^3 \setminus \Gamma_1$.
Similarly, define the limiting current density by
$$ j_{0,\infty}(q) = q \rho_{0,\infty}(q)$$
for $q \in \bfR^3$.
Then, the charge and current densities have the self-similar 
asymptotic profiles 
\begin{align*}
\sup_{x \in \bfR^3}   \left | t^3 \rho(t,x) - \rho_{0,\infty} \left (\frac{x}{t} \right) \right | & \lesssim t^{-1} \ln^{6}(t),\\
\sup_{x \in \bfR^3}  \left | t^3 j(t,x) - j_{0,\infty} \left (\frac{x}{t} \right) \right | & \lesssim t^{-1}\ln^{6}(t),
\end{align*}
and $\rho_{0,\infty}(q)$ satisfies
\begin{equation}
\label{Pinfmass}
\int \rho_{0,\infty}(q) \ dq = \mcM_{\mathrm{net}} = 0.
\end{equation}
Furthermore, for every $i = 1,2,3$ the derivatives satisfy
\begin{align*}
\sup_{x \in \bfR^3} \left | t^4 \partial_{x_i} \rho(t,x) - \partial_{q_i}\rho_{0,\infty} \left (\frac{x}{t} \right) \right | & \lesssim t^{-1} \ln^{8}(t),\\
\sup_{x \in \bfR^3}  \left | t^4 \partial_{x_i}j(t,x) -  \partial_{q_i}j_{0,\infty}\left (\frac{x}{t} \right) \right | & \lesssim t^{-1}\ln^{8}(t),\\
\sup_{x \in \bfR^3}  \left | t^4 \partial_t j^i(t,x) + \left [3j^i_{0,\infty}\left (\frac{x}{t} \right) +  \frac{x}{t} \cdot \nabla_qj^i_{0,\infty}\left ( \frac{x}{t} \right) \right ]  \right | & \lesssim t^{-1}\ln^{8}(t).
\end{align*}

\item Let $\gamma < 1$ and $\mcU < 1$ be as defined in \eqref{eq:gamma} and \eqref{eq:mcU}, respectively. Denote the unit vector in the direction of $u$ by $\hat{u} = \frac{u}{|u|}$.
Define the functions $E_{0,\infty}(q)$ and $B_{0, \infty}(q)$ by
\begin{equation}
\label{E0inf}
	E_{0,\infty}\rndP{q}=  \int_{|u|\leq \mcU} W^E_{0,\infty}\rndP{u, \frac{q + u}{1 - |u|}} \, \frac{du}{|u|^2\rndP{1-|u|}^3}
\end{equation}
where
$$ W^E_{0,\infty}(u, q)= \kappa_T^E(\hat{u},q) \rho_{0,\infty} (q)$$
and
\begin{equation}
\label{B0inf}
	B_{0,\infty}\rndP{q}=   \int_{|u|\leq \mcU} W^B_{0,\infty}\rndP{u, \frac{q + u}{1 - |u|}} \, \frac{du}{|u|\rndP{1-|u|}^3}
\end{equation}
where
$$ W^B_{0,\infty}(u, q)= \kappa_T^B(\hat{u},q) \rho_{0,\infty} (q)$$
respectively, for $|q| < \gamma$ and $|u| \leq \mcU$.
Then, $E$ and $B$ have the self-similar asymptotic profiles
\begin{align*}
\sup_{|x| \lesssim \gamma t} \left | t^2 E(t,x) - E_{0,\infty} \left (\frac{x}{t} \right ) \right | & \lesssim t^{-1}\ln^{8}(t),\\
\sup_{|x| \lesssim \gamma t} \left | t^2 B(t,x) - B_{0,\infty} \left (\frac{x}{t} \right ) \right | & \lesssim t^{-1}\ln^{8}(t).
\end{align*}
Additionally, the Lorentz force of the $\alpha$th species satisfies
$$\sup_{\substack{|x| \leq \ln(t)\\ |p| \leq \beta}}  \left | t^2 K^\alpha(t, x + v_\alpha(p)t , p) - K^\alpha_{0,\infty} \left ( p \right ) \right |  \lesssim t^{-1}\ln^{8}(t)$$
for all $\alpha = 1, ..., N$
where 
$$K^\alpha_{0,\infty}(p) = e_\alpha \bigg [E_{0,\infty}(v_\alpha(p)) + v_\alpha(p) \times B_{0,\infty}(v_\alpha(p) ) \bigg].$$


\item
For every $\alpha = 1, ..., N$
there is $f^\alpha_\infty \in C(\bfR^6)$ 
such that
$$f^\alpha \biggl (t,x +v_\alpha(p)t - \frac{e_\alpha}{m_\alpha} \ln(t) \mathbb{A}_\alpha(p) K^\alpha_{0,\infty}(p),p \biggr) \to f^\alpha_\infty(x,p)$$
uniformly
as $t \to \infty$, namely we have the convergence estimate
$$\sup_{(x,p) \in \bfR^6} \left | f^\alpha \biggl (t,x +v_\alpha(p)t - \frac{e_\alpha}{m_\alpha} \ln(t) \mathbb{A}_\alpha(p) K^\alpha_{0,\infty}(p), p \biggr) - f^\alpha_\infty(x,p) \right |  \lesssim t^{-1}\ln^{8}(t).$$
In particular, $f^\alpha_\infty$ is directly related to the limiting spatial average via
$$ F_\infty^{\alpha,0}(p) = \int f^\alpha_\infty(x,p) \ dx$$
for every $\alpha = 1, ..., N$.
\end{enumerate}
\end{theorem}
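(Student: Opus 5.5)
The plan is to work entirely in the free-flow frame $g^\alpha(t,x,p)=f^\alpha(t,x+v_\alpha(p)t,p)$ and proceed in four stages: (a), then the densities in (b), then the fields and the Lorentz force in (c), and finally the scattering statement (d). Throughout I use Theorem \ref{T0}, which gives uniform momentum support $|p|\le\beta$, the field bounds $|E|+|B|\lesssim t^{-1}(t-|x|+2L)^{-1}$ with derivative analogues, and the characteristic estimate $s-|\mcXa|\gtrsim s$. The key preliminary step is a bootstrap-free bound on the spatial support of $g^\alpha$. Along characteristics of \eqref{RVMg}, the $x$-velocity is $-\frac{e_\alpha}{m_\alpha}t\A_\alpha K^\alpha$. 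Since particles stay a distance comparable to $t$ inside the light cone, the force satisfies $|K^\alpha|\lesssim t^{-2}$. Hence the $x$-drift is $O(t^{-1})$, and the support of $g^\alpha(t)$ lies in $|x|\lesssim \ln t$. Similarly $\|\nabla_{x,p} g^\alpha(t)\|_\infty \lesssim \ln^k t$, obtained by differentiating \eqref{RVMg} and using the gradient bounds of Theorem \ref{T0}.

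\textbf{Step (a).} Integrate \eqref{RVMg} in $x$. The transport term $-t\A_\alpha K\cdot\nabla_x g$ integrates by parts to $t\,\nabla_x\cdot(\A_\alpha K)\,g$, which is of size $t\cdot t^{-3}\ln t$. The $\nabla_p$ term gives $|\nabla_p g|\,|K|\lesssim \ln^k t\cdot t^{-2}$. Therefore $|\partial_t F^{\alpha,0}|\lesssim t^{-2}\ln^k t$, so $F^{\alpha,0}$ is Cauchy with rate $t^{-1}\ln^k t$. Mass conservation passes to the limit because the supports are uniformly bounded.

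\textbf{Step (b).} Use the representation \eqref{rho_exp} with $\ell=0$. The remainder involves $y\,D_q\mcH_\rho$ over $|y|\lesssim\ln t$, which is $O(t^{-1}\ln^{k} t)$. The leading term is $\rho_0(t,\cdot)$, which converges to $\rho_{0,\infty}$ by (a). This step needs $\nabla_p g$ convergence only in the weak sense of being bounded, since the Taylor remainder suffices. The relation $\int\rho_{0,\infty}=\mcM_{\rm net}$ follows by changing variables $q=v_\alpha(p)$, which cancels $\mcD_\alpha$. For derivatives, differentiate the representation in $x$. Since $\partial_x$ acting on the $(x-y)/t$ argument yields $t^{-1}D_q$, the same Taylor argument applies. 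For $\partial_t j$, use the continuity-type identity or differentiate $t^{-3}\int\mcH_j(t,y,(x-y)/t)dy$ directly in $t$. The $\partial_t g$ contribution is $O(t^{-2})$ after integrating in $y$, as in (a), which gives the stated $-3j_{0,\infty}-q\cdot\nabla j_{0,\infty}$.

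\textbf{Step (c)} is the main obstacle. I would use the Glassey–Strauss decomposition $E=E_T+E_S+E_{\rm data}$. $E_{\rm data}$ vanishes for $t>|x|+2L$, and $E_S$ carries an extra factor of $K$ and is $O(t^{-3}\ln t)$ inside $|x|\le\gamma t$. For $E_T$, I would rewrite the backward light-cone integral via $y=x+(t-s)\omega$ with the substitution $u=(t-s)\omega/t$. Inserting \eqref{rhop} at time $s=t(1-|u|)$ produces exactly the stated $\mcH_T^E$ form with the kernel $|u|^{-2}(1-|u|)^{-3}$. The region $|u|>\mcU$ must be shown negligible, using the support of $\rho$ in $|y|\le\gamma s+O(\ln s)$ together with $|x|\le\gamma t$. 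Replacing $\mcH_\rho(t(1-|u|),\cdot)$ by its limit and Taylor-expanding in $z$ costs $t^{-1}\ln^k t$ uniformly, since $1-|u|\ge1-\mcU>0$. The singularity $|u|^{-2}$ is integrable in 3D. The Lorentz force statement then follows by evaluating at $x+v_\alpha(p)t$ with $|x|\le\ln t$, because $|v_\alpha(p)|\le\gamma$ and the profiles are Lipschitz.

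\textbf{Step (d).} Write \eqref{RVMg} along its characteristics. The $x$-component satisfies $\dot X=-\frac{e_\alpha}{m_\alpha}t\A_\alpha K=-\frac{e_\alpha}{m_\alpha}t^{-1}\A_\alpha(P)K^\alpha_{0,\infty}(P)+O(t^{-2}\ln^k t)$, while $\dot P=O(t^{-2})$. Hence $P\to P_\infty$ and $X+\frac{e_\alpha}{m_\alpha}\ln t\,\A_\alpha K_{0,\infty}(P)$ converges, both at rate $t^{-1}\ln^k t$. Inverting the flow, which is a near-identity perturbation with Lipschitz control from the $\nabla g$ bounds, gives uniform convergence of $g^\alpha(t,x-\frac{e_\alpha}{m_\alpha}\ln t\,\A_\alpha K^\alpha_{0,\infty},p)$ to some $f^\alpha_\infty$. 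Integrating over $x$ identifies $\int f^\alpha_\infty\,dx=F^{\alpha,0}_\infty$. I expect the delicate points to be the uniform $\ln$-power bookkeeping for $\nabla_p g$ and, above all, the field representation in (c).
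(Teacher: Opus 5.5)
The paper does not prove this theorem. It is quoted from \cite{Pan-BA2025}, with a shorter proof in \cite{Breton} and a vector-field proof in \cite{Bigorgne}. The closest in-paper argument is the machinery of Section \ref{sec:lemmas}, which generalizes that proof.

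Your route is the same one:
\begin{itemize}
\item the free-streaming frame;
\item $\partial_tF^{\alpha,0}$ via integration by parts in $x$ (Lemma \ref{DF0} with $k=0$);
\item the zeroth-order Taylor expansion of \eqref{rhop};
\item in Step (c), essentially Lemma \ref{Krep}, including the truncation to $|u|\le\mcU$.
\end{itemize}
You correctly replace the bounded support assumed there by $\mcR(t)\lesssim\ln t$, and this is where the logarithms come from.

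One point should be made explicit. Your bound on $E_S$ requires the form in which the momentum derivative sits on the kernel rather than on $f$, because $\|\nabla_pf(t)\|_\infty$ grows like $t$. The paper gets this form by integrating \eqref{ES_GS} by parts in $p$, using $\nabla_p\cdot K^\alpha=0$. With the $\ln t$-size $z$-support you then get $O(t^{-3}\ln^3t)$, not $O(t^{-3}\ln t)$.

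\textbf{The gap.} You never go beyond first derivatives of $g^\alpha$, so you only obtain uniform convergence of $F^{\alpha,0}(t)$. You therefore get neither $F^{\alpha,0}_\infty\in C^1$, which (a) asserts, nor any convergence of $\nabla_pF^{\alpha,0}(t)$. Every derivative statement downstream depends on this.
\begin{itemize}
\item In (b), the leading term of $t^4\partial_{x_i}\rho$ is $4\pi\,\partial_{q_i}\mcA_0(t,x/t)$, which is built from $\nabla_pF^{\alpha,0}(t)$. The Taylor remainder now involves $D_q^2\mcH_\rho$, i.e. $\nabla_p^2g^\alpha$, which you have not bounded. The same holds for $\partial_{x_i}j$ and $\partial_tj$, so ``the same Taylor argument applies'' does not close.
\item In (c), ``the profiles are Lipschitz'' is not automatic. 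Differentiating \eqref{E0inf} puts the derivative on $\rho_{0,\infty}$, and for a merely continuous density such a singular-integral field need not be Lipschitz.
\item In (d), both $\frac{d}{dt}\bigl[\ln t\,\A_\alpha(P)K^\alpha_{0,\infty}(P)\bigr]$ and the $p$-Lipschitz control you invoke when inverting the flow require $\nabla_pK^\alpha_{0,\infty}$.
\end{itemize}

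\textbf{The fix.} Run your scheme one order higher.
\begin{itemize}
\item Using the $\nabla_x^2(E,B)$ bound of Theorem \ref{T0} and Gr\"onwall, get $\|\nabla_x^2g^\alpha\|_\infty\lesssim1$, $\|\nabla_x\nabla_pg^\alpha\|_\infty\lesssim\ln^2t$ and $\|\nabla_p^2g^\alpha\|_\infty\lesssim\ln^4t$, as in Lemma \ref{D2g}.
\item Repeat Step (a) for $\nabla_pF^{\alpha,0}$: this is Lemma \ref{DF0} with $k=1$, plus a $\ln^3t$ volume factor. It gives $\|\nabla_p(F^{\alpha,0}(t)-F^{\alpha,0}_\infty)\|_\infty\lesssim t^{-1}\ln^7t$.
\item This yields $F^{\alpha,0}_\infty\in C^1_c$, hence $C^1$ profiles $\rho_{0,\infty},E_{0,\infty},B_{0,\infty}$. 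It also gives a Taylor remainder of size $t^{-1}\ln t\cdot\ln^4t\cdot\ln^3t$, which is the source of the $\ln^8$ rates.
\end{itemize}

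For (d), it is also simpler to differentiate $g^\alpha\bigl(t,x-\frac{e_\alpha}{m_\alpha}\ln t\,\A_\alpha(p)K^\alpha_{0,\infty}(p),p\bigr)$ directly in $t$. The Lorentz-force asymptotics make this time derivative $O(t^{-2}\ln^8t)$, so no flow inversion is needed.
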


If the plasma is non-neutral, i.e. $\mcM \neq 0$, then $\rho_{0,\infty} \not\equiv 0$ due to \eqref{Pinfmass} and these estimates are sharp, up to a correction in the logarithmic powers of the error terms.
Hence, $\| \rho(t) \|_\infty, \| j(t) \|_\infty \sim \mathcal{O}\left(t^{-3} \right)$ and $\| (E,B)(t) \|_\infty \sim \mathcal{O}\left(t^{-2} \right)$.
However, when the plasma is neutral, i.e. $\mcM = 0$, it is possible that the limiting charge density $\rho_{0,\infty}$ (and hence, the current density $j_{0,\infty}$ and limiting fields $E_{0,\infty}$ and $B_{0,\infty}$) is identically zero, which implies stronger decay of $\| \rho(t) \|_\infty$, $\| E(t) \|_\infty$, $\|B(t) \|_\infty$, and related quantities.
Indeed, as we will show these quantities can decay at any polynomial rate greater than the above powers, depending upon the behavior of the limiting spatial averages. Note that a nontrivial neutral plasma is necessarily comprised of more than one species of charged particles, so that $N\geq2$ in what follows.

\begin{theorem}
\label{oldT2}
If $\rho_{0,\infty} \equiv 0$, then the asymptotic behavior described above is improved in the following manner:

\begin{enumerate}[(a)]

\item We have the faster decay estimates
\begin{align*}
& \| \rho(t) \|_\infty + \| j(t) \|_\infty \lesssim t^{-4}\ln^6(t),\\
& \| \nabla_x \rho(t) \|_\infty + \| \nabla_x j(t) \|_\infty + \| \partial_t j(t) \|_\infty \lesssim t^{-5}\ln^8(t), & 
\end{align*}
as well as
$$\sup_{|x| \lesssim \gamma t}  \left ( \left |E(t, x) \right | + \left |B(t,x) \right | \right ) \lesssim t^{-3}\ln^8(t).$$

\item The distribution functions scatter linearly, namely
for each $\alpha = 1, ..., N$ there is $f^\alpha_\infty \in C_c^1(\bfR^6)$  such that
$$f^\alpha(t,x +v_\alpha t,v_\alpha) \to f^\alpha_\infty(x,p)$$
uniformly
as $t \to \infty$ with the convergence estimate
$$\sup_{(x,p) \in \bfR^6} \left | f^\alpha(t,x +v_\alpha t, v_\alpha) - f^\alpha_\infty(x,p) \right |  \lesssim t^{-1}.$$

\end{enumerate}
\end{theorem}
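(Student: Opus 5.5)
We work in the free-streaming frame of \eqref{RVMg}, with $g^\alpha(t,x,p)=f^\alpha(t,x+v_\alpha(p)t,p)$, and argue in two stages. First we improve the decay of $\rho$, $j$ and the fields using $\rho_{0,\infty}\equiv 0$. Then we use the improved Lorentz force decay to show that $g^\alpha$ converges with rate $t^{-1}$.

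\textbf{Stage 1: improved density decay.} Apply the expansion \eqref{rho_exp} with $\ell=0$:
$$t^3\rho(t,x)=\rho_0(t,x)+\mathcal{O}(t^{-1}).$$
The remainder is $\mathcal{O}(t^{-1})$ because the spatial support of $g^\alpha$ grows at most like $\ln t$ (this follows from Theorem \ref{oldT1}(d)), while $D_q\mcH_\rho$ is bounded. That bound uses $\|\nabla_p g^\alpha\| \lesssim \ln t$ together with the smoothness of $\mcD_\alpha$ and $v_\alpha^{-1}$ on $|q|\le\gamma$, and these account for the logarithms. Next,
$$\rho_0(t,x)=4\pi\mcA_0(t,x/t),$$
and $\mcA_0(t,\cdot)$ differs from $\rho_{0,\infty}\equiv 0$ by $\mathcal{O}(t^{-1}\ln^5 t)$ by Theorem \ref{oldT1}(a). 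Together these give $\|\rho(t)\|_\infty\lesssim t^{-4}\ln^6 t$. The same argument with $\mcH_j=q\mcH_\rho$ handles $j$. For $\nabla_x\rho$ and $\nabla_x j$, differentiate the representation \eqref{rhop}: each $x$-derivative produces a factor $t^{-1}D_q$, and the leading term is $t^{-4}\nabla_q\rho_{0,\infty}\equiv 0$. This gives $t^{-5}\ln^8 t$, using the derivative estimates in Theorem \ref{oldT1}(b) with the limit profile set to zero. For $\partial_t j$, use the identity from Theorem \ref{oldT1}(b): its limiting profile $-(3j_{0,\infty}+q\cdot\nabla_q j_{0,\infty})$ vanishes identically, so the same error bound applies.

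\textbf{Stage 2: field decay and scattering.} For $E$ and $B$ on $|x|\lesssim\gamma t$, use the Glassey--Strauss decomposition $E=E_T+E_S+E_{\mathrm{data}}$.
\begin{itemize}
\item[(i)] $E_{\mathrm{data}}$ vanishes for $t>L+|x|$, so it contributes nothing inside the cone for large $t$.
\item[(ii)] $E_T$ is the integral over $|u|\le\mcU$ of $\kappa_T^E\,\rho$ evaluated at time $t(1-|u|)\gtrsim t$. Inserting the new bound $\rho\lesssim s^{-4}\ln^6 s$ in place of $s^{-3}$ raises the decay by one power of $t$.
\item[(iii)] $E_S$ involves $\int (\ldots) K f\,dp$ weighted by $(t-\tau)^{-1}$. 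It is quadratic in the fields, of order $\epsilon_0 t^{-3}$ or better, since $K\lesssim\tau^{-2}$ and $|x-y|\sim t-\tau$ inside the cone.
\end{itemize}
This yields $t^{-3}\ln^8 t$ in part (a). The main obstacle is controlling $E_S$ and the contribution of $E_T$ from $|u|$ near $\mcU$ uniformly. We will split the $\tau$-integral at $\tau=t/2$ and use the cone estimate $s-|\mcXa|\gtrsim s$ from Theorem \ref{T0}.

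Along characteristics, for $|p|\le\beta$ and $x$ in the support, the point $x+v_\alpha(p)t$ lies inside $|x|\lesssim\gamma t$. Hence
$$|K^\alpha(t,x+v_\alpha t,p)|\lesssim t^{-3}\ln^8 t.$$
In \eqref{RVMg} the coefficient of $\nabla_x$ is therefore $t\cdot t^{-3}\ln^8 t=t^{-2}\ln^8 t$, which is integrable in time. This has two consequences. First, the characteristics of $\mcV_g$ converge, with no logarithmic shift, so the supports of $g^\alpha$ stay uniformly bounded. This removes the $\ln t$ losses and improves Stage 1 consistently; if needed we rerun Stage 1 with bounded supports. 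Second, $\nabla_p g^\alpha$ stays bounded, by a Gronwall argument on the linearized characteristic system whose coefficients are integrable.

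Finally, $g^\alpha$ is constant along characteristics $(X(t),P(t))$ with $|\dot X|+|\dot P|\lesssim t^{-2}\ln^8 t$. This gives
$$|g^\alpha(t,x,p)-g^\alpha(s,x,p)|\lesssim \|\nabla_{x,p}g^\alpha\|_\infty\int_s^t \tau^{-2}\ln^8\tau\,d\tau.$$
To sharpen the rate to $t^{-1}$ without logarithms, we run a second bootstrap. With bounded supports, the Stage 1 bounds become $\rho\lesssim t^{-4}$ and $K\lesssim t^{-3}$, the integral above becomes $\mathcal{O}(t^{-1})$, and we get a Cauchy limit $f^\alpha_\infty\in C^1_c$ with $\sup|g^\alpha(t)-f^\alpha_\infty|\lesssim t^{-1}$. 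Regularity $C^1$ of the limit follows from the uniform bounds on $\nabla_{x,p}g^\alpha$.
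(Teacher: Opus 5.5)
This theorem is quoted from \cite{Pan-BA2025} and is not reproved in the paper. Step~1 of the base case in Section~\ref{sec:proof} carries out exactly the improvement it requires, so I use that as the benchmark.

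\textbf{Part (a).} This part is fine, but it needs much less than you do. The condition $\rho_{0,\infty}\equiv 0$ forces $j_{0,\infty}$, $E_{0,\infty}$, $B_{0,\infty}$, the $\partial_t j$ profile, and all their $q$-derivatives to vanish. So every estimate in (a) can be read off directly from Theorem~\ref{oldT1}(b),(c). Your Glassey--Strauss rederivation of the field bound is therefore unnecessary, and step (ii) is not right as written. The kernel $\kappa_T^E$ depends on $v_\alpha(p)$, so $E_T$ is not an integral of $\kappa_T^E\rho$: cancellation between species in $\rho$ does not pass through a $p$-dependent weight. You can repair it by using that on the support $v_\alpha(p)=(y-z)/s$ with $|z|\le\mcR(s)$. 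This replaces the kernel by $\kappa_T^E(\hat u,y/s)$ at a pointwise cost $O(\mcR(s)s^{-4})$.

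\textbf{Part (b): the sharp rate.} Here there is a genuine gap. Two of your steps silently require decay of $\nabla_x(E,B)$ along particle paths that you never establish. The only bound you have is $\mcK_1(t)\lesssim t^{-3}\ln t$ from Lemma~\ref{lem:glassey-strauss}.

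(1) The linearized characteristic system for $g^\alpha$ does not have integrable coefficients. Differentiating $-t\mathbb{A}_\alpha K^\alpha(t,\mathcal{Y}^\alpha+v_\alpha(\mathcal{P}^\alpha)t,\mathcal{P}^\alpha)$ in $p_0$ produces the coefficient $t^2\mathbb{A}_\alpha\nabla_xK^\alpha\mathbb{A}_\alpha$ in front of $\partial_{p_0}\mathcal{P}^\alpha$. This has size $t^{2}\mcK_1\sim t^{-1}\ln t$, so Lemma~\ref{Dpg} gives only $\mcG^1_p\lesssim\ln^2t$, not a uniform bound.

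(2) Bounded support alone does not remove the logarithms. The rate $t^{-1}\ln^5t$ for $F^{\alpha,0}$, which controls $\rho_0=4\pi\mcA_0(t,x/t)$, is an input you never improve. Redoing it via Lemma~\ref{DF0} gives $\int_t^\infty(s\mcK_1+\mcK_0\mcG^1_p)\,ds\lesssim t^{-1}\ln t$ at best, and the Taylor remainder still carries $\mcG^1_p$. So your second bootstrap returns $\rho\lesssim t^{-4}\ln^kt$, $\mcK_0\lesssim t^{-3}\ln^kt$, and a scattering rate $t^{-1}\ln^kt$, not $t^{-1}$.

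The missing idea is to exploit the cancellation in the field derivative itself. In Lemma~\ref{Krep}, each $x$-derivative becomes $t^{-1}D_q$ acting on $\mcH_T^E$, whose leading part is controlled by $\nabla_p(F^{\alpha,0}-F^{\alpha,0}_\infty)$. Lemma~\ref{Kconv} with $k=1$ then gives $\mcK_1\lesssim t^{-4}\ln^4t$; this uses $\mcG^2_p\lesssim\ln^4t$ from Lemma~\ref{D2g}, i.e.\ second derivatives of $g^\alpha$. Only then does the chain close:
\begin{itemize}
\item Lemma~\ref{Dpg} gives $\mcG^1_p\lesssim1$;
\item Lemma~\ref{DF0} gives $\|F^{\alpha,0}(t)-F^{\alpha,0}_\infty\|_\infty\lesssim t^{-2}\ln^8t$;
\item Lemma~\ref{Kconv} with $k=\ell=0$ gives $\mcK_0\lesssim t^{-3}$;
\item finally $\|\partial_tg^\alpha\|_\infty\lesssim t\mcK_0\mcG^1_{x,p}+\mcK_0\mcG^1_p\lesssim t^{-2}$, which yields the rate $t^{-1}$.
\end{itemize}

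\textbf{Part (b): regularity of the limit.} Uniform bounds on $\nabla_{x,p}g^\alpha$ only make $f^\alpha_\infty$ Lipschitz, not $C^1$. You need uniform convergence of $\nabla_{x,p}g^\alpha(t)$, for example integrability of $\|\partial_t\nabla_{x,p}g^\alpha\|_\infty$ as at the end of Section~\ref{sec:proof}. That again requires the improved $\mcK_1$ together with bounds on second derivatives of $g^\alpha$.
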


Of course, the upper bounds listed above do not guarantee a sharp rate of decay or an identification of the correct asymptotic behavior of these quantities.
For instance, $\rho(t,x)  = j(t,x) \equiv 0$ satisfy these estimates and can be constructed from a rudimentary initial profile in a neutral plasma, for example, in which the distributions of all positive and negative charges overlap everywhere in phase space.
Therefore, we now turn our attention to stating new results that every order of decay is actually attained by some solution of \eqref{Vlasov}-\eqref{Maxwell}.

\subsection{Main Results}

We are now ready to present our main results, which extend the asymptotic behavior described by Theorems \ref{oldT1} and \ref{oldT2} to higher order expansions and close the estimates with bounds on higher order momentum derivatives of $g^\alpha$.
\begin{theorem}
\label{T1}
Let $m \in \N_0$ be given. There exist nontrivial $f^\alpha \in C^{m+9} \left((0,\infty) \times \mathbb{R}^6 \right)$  for every $\alpha = 1, ..., N$ satisfying \eqref{Vlasov}-\eqref{Maxwell} such that $\mcM_\mathrm{net} = 0$, and
$$
\begin{gathered}
\| \rho(t)\|_\infty, \| j(t)\|_\infty \sim t^{-m-3},\\
\| E(t)\|_\infty, \| B(t)\|_\infty \sim t^{-m-2}.
\end{gathered}
$$
If $m = 0$, then we have 
$$\| \nabla_x (E(t), B(t)) \|_\infty \sim t^{-3}$$
and for every $\alpha = 1, ..., N$ there is $f^\alpha_\infty \in C_c^{m+9}(\bfR^6)$ such that
$$\sup_{(x,p) \in \bfR^6} \left | f^\alpha \biggl (t,x +v_\alpha(p)t - \frac{e_\alpha}{m_\alpha}\ln(t) \mathbb{A}_\alpha(p) K^\alpha_{0,\infty}(p), p \biggr) - f^\alpha_\infty(x,p) \right |  \lesssim t^{-1}\ln^{8}(t).$$
In contrast, for $m \geq 1$  we have for every $k =1, ..., m$
$$\| \nabla_x^k (E(t), B(t))\|_\infty \sim t^{-m-3}, $$
and for every $\alpha = 1, ..., N$ there is $f^\alpha_\infty \in C_c^{m+9}(\bfR^6)$ such that
$$\sup_{(x,p) \in \bfR^6} \left | f^\alpha(t,x +v_\alpha(p)t, p) - f^\alpha_\infty(x,p) \right |  \lesssim t^{-m}. $$
\end{theorem}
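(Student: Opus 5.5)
The strategy is to establish the higher-order analogue of Theorems \ref{oldT1}--\ref{oldT2}: all expansion coefficients $\mcA_{\delta}$ with $|\delta|<m$ vanish in the limit, while a nonzero order-$m$ coefficient is realized. Then the rates follow from the expansion \eqref{rho_exp} and its analogues for $j$ and $E,B$.

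\textbf{Step 1: a general conditional asymptotic theorem.} For small data of class $C^{m+9}$, I will prove the following by induction on $\ell\le m$. Suppose $\mcA_{\delta,\infty}\equiv 0$ for all $|\delta|<\ell$. Then:
\begin{itemize}
\item[(i)] $\|\rho(t)\|_\infty+\|j(t)\|_\infty\lesssim t^{-3-\ell}$, and $|E|+|B|\lesssim t^{-2-\ell}$ on $|x|\lesssim\gamma t$.
\item[(ii)] $t^{3+\ell}\rho(t,x)-\rho_{\ell,\infty}(x/t)\to0$ at rate $t^{-1}$ up to logarithms, with analogous statements for $j$, $E$, $B$ and their derivatives.
\end{itemize}
The mechanism is a bootstrap on the transformed equation \eqref{RVMg}. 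For $\ell\ge1$ the force satisfies $K^\alpha=O(t^{-3})$, so the coefficient $tK^\alpha$ in $\mcV_g$ is integrable in time. Hence $g^\alpha$, its momentum derivatives up to order $m+8$, and the moments $F^{\alpha,\delta}$ converge at rate $t^{-\ell}$ (up to logs), giving linear scattering. Higher momentum derivatives of $g^\alpha$ are controlled by differentiating \eqref{RVMg}. This uses Lemma \ref{lem:glassey-strauss} for the field derivatives and the support bounds of Theorem \ref{T0}. The differentiation is where the $m+9$ regularity is consumed.

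The field expansion uses the representation \eqref{eq:E-l}. Its remainder has order $t^{-\ell-1}$ times integrals of $z$-moments of $D^{\ell+1}_q\mcH^E_T$, which are bounded by the derivative bounds on $g$. The time dependence inside $\mcA_\delta(t(1-|u|),q)$ is harmless since $1-|u|\ge1-\mcU>0$. The lower-order coefficients $E_k$ for $k<\ell$ are not zero a priori for finite $t$. They are $O(t^{-(\ell-k)})$ because $\mcA_\delta(t)-\mcA_{\delta,\infty}=O(t^{-\ell+|\delta|})$, and this is fed back into the bootstrap. The main obstacle is closing this bootstrap uniformly with the logarithmic losses, especially near the light cone where only Theorem \ref{T0}'s bounds are available. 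The decay outside $|x|\lesssim\gamma t$ must be handled separately. For $E$ the sup norm should still be attained inside the cone, since data fields contribute only $O(t^{-1}(t-|x|+2L)^{-1})$ near $|x|\approx t$, and $(E_0,B_0)$ can be chosen to vanish so that the free part is absent. I would first try to check that for $m\ge1$ the region $t-|x|\lesssim t$ but outside $\gamma t$ decays faster using the $(t-|x|)^{-k-1}$ gain. This is the delicate point for the lower bound $\|\nabla^k_x(E,B)\|_\infty\sim t^{-m-3}$.

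\textbf{Step 2: construction of data.} Take two species with $e_1=-e_2$ and $m_1=m_2$, so that $v_1=v_2$ and $\mcD_1=\mcD_2$. Choose $f_0^2(x,p)=f_0^1(x-a,p)$ for a small displacement, or more generally a signed combination $h(x,p)=\epsilon\,\partial_x^{\eta}\phi(x)\chi(p)$ with $|\eta|=m$ and nonnegativity ensured by adding a common background. Then the spatial moments of the net $e_1f^1+e_2f^2$ vanish to order $m-1$ and are nonzero at order $m$. For general masses and charges one chooses the data so that the corresponding weighted combination in $\mcA_\delta$ cancels.

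Because of linear scattering (Step 1, for $\ell\ge1$) and smallness, $F^{\alpha,\delta}_\infty=F^{\alpha,\delta}(0)+O(\epsilon^2)$. To make the cancellation exact at the limit rather than initially, I would use a continuity/degree argument. Alternatively, I would exploit the symmetry $f^2(t,x,p)=f^1(t,-x,\dots)$-type invariance of the system, which makes the odd/even structure persist for all time. Applying a parity symmetry to the data would force $\mcA_{\delta,\infty}=0$ for $|\delta|<m$ exactly, while $\rho_{m,\infty}\not\equiv0$ by a perturbative nondegeneracy check.

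\textbf{Step 3: sharpness.} With $\rho_{m,\infty}\not\equiv0$, Step 1(ii) gives $\|\rho(t)\|_\infty\sim t^{-m-3}$, and similarly for $j_{m,\infty}$. For the fields, $E_{m,\infty}\not\equiv0$ follows since $E_{m,\infty}$ determines $\rho_{m,\infty}$ through $\nabla\cdot E=\rho$ in self-similar variables. For $m=0$, the modified scattering and gradient rates are Theorem \ref{oldT1}. For $m\ge1$, differentiating the self-similar profile in $x$ gains only $t^{-1}$ once. Further derivatives fall on $E_{m+1}$-type corrections and boundary-layer terms, and the claimed uniform rate $t^{-m-3}$ for $k\le m$ follows from the induction.
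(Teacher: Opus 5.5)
Your Step 1 is essentially the paper's Theorem \ref{T2}: induction on the order of vanishing of $\mcA_{\delta,\infty}$, bootstrapping field decay against growth of $\nabla_p^k g^\alpha$ and convergence of the moments $F^{\alpha,\delta}$. So the rates are fine conditionally. The genuine gap is Step 2. The hypotheses $\mcA_{\delta,\infty}\equiv 0$ for $|\delta|<m$ concern the \emph{scattering state}, which is produced by the nonlinear flow. Cancelling moments in $f^\alpha_0$ only gives $F^{\alpha,\delta}_\infty=F^{\alpha,\delta}(0)+O(\epsilon^2)$, and an uncancelled residual at some order $\ell<m$ generically yields $\|\rho(t)\|_\infty\sim t^{-3-\ell}$ instead of $t^{-3-m}$.

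Neither proposed repair works. The constraints are identities in $q$, i.e.\ infinitely many scalar conditions, so a finite-parameter continuity or degree argument cannot impose them. An infinite-dimensional fixed-point version would need uniform-in-time Lipschitz dependence of $F^{\alpha,\delta}_\infty$ on the data and invertibility of the data-to-limit map; in effect this is a final-state problem you have not solved. The symmetry route fails outright. Take $e_2=-e_1$, $m_1=m_2$, and charge conjugation composed with $(x,p)\mapsto(-x,-p)$, under which invariant solutions have $E$ even and $B$ odd in $x$. This symmetry does propagate $f^2(t,x,p)=f^1(t,-x,-p)$. But then $g^2(t,y,p)=g^1(t,-y,-p)$ and $F^{2,\delta}(t,p)=(-1)^{|\delta|}F^{1,\delta}(t,-p)$, so
\[
\mcA_\delta(t,q)=e_1\mcD_1(p)\bigl[F^{1,\delta}(t,p)-(-1)^{|\delta|}F^{1,\delta}(t,-p)\bigr],\qquad p=v_1^{-1}(q).
\]
For $\delta=0$ you are left with twice the odd part of $F^{1,0}_\infty$, which nothing forces to vanish. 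Adding parity invariance of each species to kill it forces $f^1\equiv f^2$, hence $\rho\equiv 0$.

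The missing idea is to prescribe the limit rather than the initial data. The paper builds $f^\alpha_\infty(x,p)=\phi^\alpha_m(x)\psi_m(v_\alpha(p))$ with $\sum_\alpha e_\alpha m_\alpha^3\phi^\alpha_m=\mu_m$. Here $\mu_m(x)=\Phi_m(x_1)\Psi(x_2)\Psi(x_3)$ comes from a weighted Gegenbauer polynomial, so that $\int x^\delta\mu_m\,dx=0$ for $|\delta|<m$ and $=1$ for $\delta=(m,0,0)$. The identity $\mcD_\alpha(v_\alpha^{-1}(q))=m_\alpha^3(1-|q|^2)^{-5/2}$ then makes $\mcA_{\delta,\infty}$ vanish exactly for $|\delta|<m$ and not at $\delta=(m,0,0)$, for arbitrary masses. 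For $m=0$ an analogous choice gives $\mcA_{0,\infty}=(\int\phi_0)\,\eta\not\equiv 0$ with $\int\eta=0$. After scaling by a small $\epsilon$, the scattering map of \cite[Theorem 1.8]{BigorgneScattering} supplies a small-data solution converging to this state. The relation $F^{\alpha,\delta}_\infty(p)=\int(-y)^\delta f^\alpha_\infty(y,p)\,dy$ then transfers the moment structure.

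Separately, your Step 3 heuristic for $\nabla_x^k$ with $k\ge 2$ is not right. Each $x$-derivative of $t^{-m-2}E_{m,\infty}(x/t)$ gains a full factor $t^{-1}$, so the induction only gives the upper bound $\lesssim t^{-m-3}$ there. For $k=1$ a matching lower bound on $\nabla_x E$ follows directly from $\nabla\cdot E=\rho$ and the sharp rate of $\rho$, but for $k\ge 2$ your argument supplies none.
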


\begin{remark}
To the best of our knowledge, Theorem \ref{T1} is the first result to demonstrate decay rates of the densities and fields that are nontrivial (i.e., $\rho, j, E, B \not\equiv 0$) and faster than the dispersive rates (of $t^{-3}$ and $t^{-2}$, respectively) for \eqref{Vlasov}-\eqref{Maxwell}.
\end{remark}

Theorem \ref{T1} will be implied by the following result (taking $m = n+1$ and $\mcA_{\delta,\infty} \not\equiv 0$ for some $|\delta| = n+1$) when $m \geq 1$, which will be established inductively.

\begin{theorem}
\label{T2}
Let $n\in\N_0$ be given. Consider the class of initial data $(f_0^\alpha, E_0, B_0) \in C_c^{n+10}(\R^6)$ launching solutions $f^\alpha\in C^{n+10}((0,\infty)\times\R^6)$ and $E,B \in C^{n+10}((0,\infty)\times\R^3)$ of \eqref{Vlasov}-\eqref{Maxwell} for $\alpha = 1, ..., N$. If
\begin{equation}
\label{Avanish}
\mcA_{\delta,\infty} \equiv 0 \qquad \mathrm{ for \ every \ } |\delta| \in \{0, ..., n\},
\end{equation}
then for every $\alpha = 1, ..., N$ there is $f^\alpha_\infty \in C_c^{n+10}(\bfR^6)$ with $\rho_{n+1,\infty}$, $j_{n+1,\infty}$, $E_{n+1,\infty}$, and $B_{n+1,\infty}$ defined by \eqref{eq:rho-l-inf}, \eqref{eq:j-l-inf}, \eqref{eq:E-l-inf}, and \eqref{eq:B-l-inf} respectively,
such that
$$
\begin{gathered}
\sup_{x \in \mathbb{R}^3} \left | t^{n+4}\rho(t,x) - \rho_{n+1, \infty} \left( \frac{x}{t}\right)  \right | \lesssim t^{-1}, \\
\sup_{x \in \mathbb{R}^3} \left | t^{n+4}j(t,x) - j_{n+1, \infty} \left( \frac{x}{t}\right)  \right | \lesssim t^{-1}, \\
\sup_{x \in \mathbb{R}^3} \left | t^{n+3} E(t,x) - E_{n+1,\infty} \left( \frac{x}{t}\right)  \right | \lesssim t^{-1}, \\ 
\sup_{x \in \mathbb{R}^3} \left | t^{n+3} B(t,x) - B_{n+1,\infty} \left( \frac{x}{t}\right)  \right | \lesssim t^{-1}, \\ 
\end{gathered}
$$
with
\begin{gather}
\sup_{x \in \mathbb{R}^3} \left | t^{n+3}\nabla^k_x(E,B)(t,x) - \nabla_q^k (E,B)_{n+1-k,\infty} \left( \frac{x}{t}\right)  \right | \lesssim t^{-1}, 
\label{eq:derivs-e} \\
\mcG^{n+2}_{x,p}(t) + \mcG^{n+2}_{p}(t) \lesssim 1\nonumber
\end{gather}
and
$$
\begin{gathered}
\sup_{x \in \mathbb{R}^3} \left | t^{n+4}\nabla^{n+2}_x(E,B)(t,x) - \nabla_q^{n+2}(E,B)_{0,\infty} \left( \frac{x}{t}\right)  \right | \lesssim t^{-1} \ln^2(t), \\
\end{gathered}
$$
In each of these cases, we further have
$$\sup_{(x,p) \in \bfR^6} \left | f^\alpha(t,x +v_\alpha(p)t, p) - f^\alpha_\infty(x,p) \right |  \lesssim t^{-n-1}$$
and
$$\sup_{(x,p) \in \bfR^6} \left | \nabla_x^i \nabla_p^j f^\alpha \left(t,x+v_\alpha(p) t, p \right) - \nabla_x^i \nabla_p^j  f^\alpha_\infty(x,p) \right | \lesssim \max \left \{ t^{-n-2}, t^{j -n -2} \right \}$$
for $i,j \in \mathbb{N}_0$ with $1 \leq i+j \leq n+1$.
\end{theorem}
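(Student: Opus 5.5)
We argue by induction on $n$. The base case $n=0$ is essentially Theorem \ref{oldT2}, upgraded by the expansion machinery of Section 1.1. For the inductive step we assume the conclusions of Theorem \ref{T2} at level $n-1$. Since $\mcA_{\delta,\infty}\equiv 0$ also for $|\delta|\le n-1$, those conclusions give $\|\rho(t)\|_\infty,\|j(t)\|_\infty\lesssim t^{-n-3}$ and $|E|+|B|\lesssim t^{-n-2}$ on $|x|\lesssim\gamma t$. Near the light cone no particles are present, so the fields there are governed by the data and the tangential part only. The plan is to bootstrap one additional power of $t$ using the extra vanishing $\mcA_{\delta,\infty}\equiv0$ for $|\delta|=n$.

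\emph{Step 1 (convergence of moments).} Differentiate $F^{\alpha,\delta}(t,p)$ in time using \eqref{RVMg}. Integrating by parts in $y$ gives
\[
\partial_t F^{\alpha,\delta} = -\tfrac{e_\alpha}{m_\alpha}\int (-y)^\delta\, \mcV_g\text{-terms},
\]
which is bounded by $t\,|K^\alpha(t,y+v_\alpha t,p)|\,(|\nabla_x g|+|\nabla_p g|)$ times moments. The spatial support of $g^\alpha$ stays bounded (at most $\ln t$ growth). Using the inductive field bound $|K^\alpha(t,x+v_\alpha t,p)|\lesssim t^{-n-2}$ and the bounds $\mcG^{n+1}_{x,p}+\mcG^{n+1}_p\lesssim1$ on $\nabla_p^j g$, we obtain $|\partial_t F^{\alpha,\delta}|\lesssim t^{-n-2}$. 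Hence $F^{\alpha,\delta}\to F^{\alpha,\delta}_\infty$ at rate $t^{-n-1}$, together with its $p$-derivatives up to order $n+2$. The same integration gives $g^\alpha\to f^\alpha_\infty$ at rate $t^{-n-1}$, and derivative convergence follows by differentiating \eqref{RVMg}. Each $\nabla_p$ produces an extra factor $t$ from $t\A_\alpha\nabla_x$, which explains the $t^{j-n-2}$ loss in the statement.

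\emph{Step 2 (densities).} Apply the expansion \eqref{rho_exp} with $\ell=n+1$. The terms with $m\le n$ are $t^{-m}\rho_m(t,x)$, and $\rho_m$ is built from $D^\delta_q\mcA_\delta(t,\cdot)$ with $|\delta|=m$. By hypothesis \eqref{Avanish} and Step 1,
\[
|D_q^\delta\mcA_\delta(t,q)|=|D_q^\delta(\mcA_\delta-\mcA_{\delta,\infty})(t,q)|\lesssim t^{-n-1},
\]
so these terms contribute $O(t^{-n-1-m})\le O(t^{-n-1})$ to $t^3\rho$. The $m=n+1$ term converges to $\rho_{n+1,\infty}$ at rate $t^{-1}$. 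The remainder is $O(t^{-n-2})$ provided $n+2$ $q$-derivatives of $\mcH_\rho$ are bounded, which is exactly $\mcG^{n+2}_p\lesssim1$. This yields the $\rho$ statement after multiplying by $t^{n+1}$; the current $j$ is handled identically with $\mcH_j$.

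\emph{Step 3 (fields, the main obstacle).} Use the Glassey--Strauss decomposition $E=E_T+E_S+E_{\rm data}$. The data part vanishes inside $|x|\le t-L$. The $S$ part is quadratic in the solution, so the inductive decay of $K$ makes it $O(t^{-n-4})$ at least. For $E_T$ we use the self-similar representation \eqref{eq:E-l} with the expansion to order $n+1$ in $1/(t(1-|u|))$. The weight $|u|\le\mcU<1$ keeps $(1-|u|)^{-m}$ harmless. The lower-order $W_m^E$ vanish up to $t^{-n-1}$ as in Step 2, since $\mcA_\delta(t(1-|u|),\cdot)$ is evaluated at a time comparable to $t$. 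The $m=n+1$ term converges to $E_{n+1,\infty}$.

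The delicate points are, first, the region near the light cone $|x|\ge\gamma t$, where the self-similar representation fails. There we must show that $E_T$ still decays like $t^{-n-3}$, using the integral over the backward cone of $\rho$ and $j$ (now $O(s^{-n-4})$) and the lack of particles near the cone. Second, the gradient estimates \eqref{eq:derivs-e}: we obtain these by differentiating the representation of Lemma \ref{Krep} in $x$. Each $x$-derivative falls on $q=x/t$ and produces $t^{-1}$, which shifts the index to $(E,B)_{n+1-k,\infty}$. The top-order case $k=n+2$ is where $\ln^2$ losses appear, via Lemma \ref{lem:glassey-strauss}.

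Finally, we close the bootstrap $\mcG^{n+2}_{x,p}+\mcG^{n+2}_p\lesssim1$ by a Gronwall argument along characteristics of the differentiated \eqref{RVMg}. The improved field decay $t^{-n-3}$ and its derivatives make the coefficients $t^{j}\,|\nabla^{j}K|$ integrable, and this is where the $C^{n+10}$ regularity is consumed.
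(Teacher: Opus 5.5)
The gap is in Steps 1--2, and it recurs for $E_T$ in Step 3. The convergence rates you obtain for the lower-order coefficients are one power of $t$ too weak, and the proposal has no mechanism for gaining that power.

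\textbf{The rates are one power short.} Multiplying \eqref{rho_exp} (with $\ell=n+1$) by $t^{n+1}$ gives $t^{n+4}\rho=\sum_{m=0}^{n}t^{n+1-m}\rho_m+\rho_{n+1}+O(t^{-1})$. So you need $\|\rho_m(t)\|_\infty\lesssim t^{m-n-2}$ for every $m\le n$, and likewise $\|\nabla_q^kW^E_m(t)\|_\infty\lesssim t^{k+m-n-2}$ in Lemma \ref{Kconv}.
\begin{itemize}
\item Even granting your uniform rate $t^{-n-1}$, the $m=0$ term is $t^{n+1}\cdot t^{-n-1}=O(1)$. Step 2 therefore does not even give convergence.
\item That uniform rate is also not correct. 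Each $\nabla_p$ falling on $K^\alpha(t,y+v_\alpha(p)t,p)$ produces $t\nabla_xK^\alpha$, so Lemmas \ref{DF0}--\ref{DFell} involve $\int_t^\infty s^{k+1}\mcK_{k+1}(s)\,ds$. With the inductive bounds $\mcK_j\lesssim t^{-n-3}$ you only get $\|\rho_m\|_\infty\lesssim t^{m-n-1}$.
\item With your stated $|K^\alpha|\lesssim t^{-n-2}$, even $|\partial_tF^{\alpha,\delta}|\lesssim t^{-n-2}$ fails, because of the term $tK^\alpha\cdot\mathbb{A}_\alpha\nabla_xg^\alpha$. You must use that $\mcA_{\delta,\infty}\equiv0$ for $|\delta|=n$ kills $\rho_{n,\infty}$ and $(E,B)_{n,\infty}$ in $(\mcQ_{n-1})$.
\end{itemize}

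\textbf{Why the missing power is hard to get.} It can only come from improving the field decay to $\mcK_j\lesssim t^{-n-4}$ for $j\le n+1$. For instance, $\|F^{\alpha,0}(t)-F^{\alpha,0}_\infty\|_\infty\lesssim\int_t^\infty(s\mcK_1+\mcK_0\mcG^1_p)\,ds$ requires $\mcK_1\lesssim t^{-n-4}$. But that improvement comes from the same expansion, Lemma \ref{Kconv} with $k=j$ and $\ell=n+1-j$. That lemma needs $\|\nabla_p^j(F^{\alpha,0}-F^{\alpha,0}_\infty)\|_\infty\lesssim t^{j-n-2}$, and hence $\mcK_{j+1}\lesssim t^{-n-4}$. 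The argument is circular as it stands.

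The paper breaks the circularity from the top down:
\begin{enumerate}
\item Lemma \ref{Dng}, fed with $\mcK_{n+2},\mcK_{n+3}$ from Lemma \ref{lem:glassey-strauss}, gives $\mcG^{n+2}_p,\mcG^{n+3}_p\lesssim\ln^2t$.
\item Lemma \ref{Kconv} with $k=n+2$, $\ell=0$ then gives $\mcK_{n+2}\lesssim t^{-n-5}\ln^2t$, hence $\mcG^{n+2}_p\lesssim1$ by \eqref{DngDnE}.
\item Next, $\mcK_{n+1}\lesssim t^{-n-4}$ follows from $k=n+1$, $\ell=0$.
\item A descending induction $(\mathcal{R}_j)$, $j=n,\dots,0$, follows. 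At each level, $\mcK_{j+1}\lesssim t^{-n-4}$ upgrades the moment rates, and Lemma \ref{Kconv} with $\ell=n+1-j$ upgrades $\mcK_j$.
\item Only then are the densities treated, via Lemma \ref{rhoconv} with $\ell=n+1$.
\end{enumerate}
The same bootstrap is already needed at $n=0$. The base case does not follow from Theorem \ref{oldT2} plus the expansion alone.

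\textbf{Two further steps fail as written.}
\begin{itemize}
\item Your Gr\"onwall closure of $\mcG^{n+2}_p\lesssim1$ cannot work with the decay you have. The forcing for $\nabla_p^{n+2}g^\alpha$ is $s^{n+3}\mcK_{n+2}(s)$, and Lemma \ref{lem:glassey-strauss} only gives $\mcK_{n+2}\lesssim t^{-n-4}\ln t$, so the integral grows like $\ln^2t$. Boundedness requires $\mcK_{n+2}\lesssim t^{-n-5}\ln^2t$, which is precisely the top-order field statement. You attribute that statement to Lemma \ref{lem:glassey-strauss}, but that lemma is a full power of $t$ weaker. The $\ln^2t$ actually arises from the term $t^{-1}\mcG^{n+3}_p$ in Lemma \ref{Kconv}. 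You also use $\mcG^{n+2}_p\lesssim1$ for the Taylor remainder in Step 2 before it is available.
\item Do not try to prove $E_T=O(t^{-n-3})$ for $|x|\ge\gamma t$. There the backward light cone meets the particles at times $s=O(1)$, which in general contribute a term of size $t^{-2}$ to $E_T$. Also $E_{\mathrm{data}}\neq0$ for $t-L\le|x|\le t+L$. So the claim is false. Lemma \ref{Krep} requires $|x|\lesssim\gamma t$ and the quantities $\mcK_\ell$ only see that region; the field statements should be read there.
\end{itemize}
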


\begin{remark}
\label{Finf}
Throughout the proof of Theorem \ref{T2}, we will establish and utilize the functions $F^{\alpha, \delta}_\infty(p)$, which will be initially understood only as the large time asymptotic limits of the functions $F^{\alpha, \delta}(t,p)$.
However, at the end of the proof we will justify the regularity needed to assert the ultimate relationship between the limiting distributions $f^\alpha_\infty(x,p)$ and the limiting spatial moments $F^{\alpha, \delta}_\infty(p)$ given by \eqref{eq:F-al-inf}.
Hence, this relationship is not needed to establish our results. 
Only the existence of the limits $F^{\alpha, \delta}_\infty(p) = \lim_{t\to\infty} F^{\alpha, \delta}(t,p)$ will be necessary, independent of an explicit formula for the limiting functions. 
\end{remark}

\begin{remark}
The additional regularity of solutions required by our main results is unlikely to be sharp, but in order to utilize the scattering map developed within \cite{BigorgneScattering}, it is necessary to assume considerably smooth scattering states (i.e., $f^\alpha_\infty \in C_c^{m+9}(\bfR^6)$).
\end{remark}


\begin{remark}
Our methods can further be extended, without the neutrality assumption or cancellation conditions on $\mcA_{\delta,\infty}$, in order to arrive at polyhomogeneous expansions of solutions, analogous to results obtained for the Vlasov-Poisson system in \cite{Bigorgne-Ruiz, Schlue-Taylor}. In this case, the spatial support of the translated distribution function $g^\alpha$ may grow in time like $\mathcal{O} \left ( \ln(t) \right )$ which introduces additional factors in the expansion of the form $\left ( \frac{\ln(t)}{t} \right )^{k}$ rather than $t^{-k}$ for $k \in \N_0$.
\end{remark}

\subsection{Organization of the paper}
Section \ref{sec:lemmas} is dedicated to some preliminary lemmas that provide asymptotic bounds (as $t \to \infty$) on the densities $\rho$ and $j$ and fields $E$ and $B$, as well as their derivatives. 
Then, these bounds are used to prove the two main theorems in Section \ref{sec:proof}. 
A rather lengthy argument establishing asymptotic bounds for derivatives of $g^\alpha$ is postponed until Section \ref{sec:g_deriv} in order to facilitate the exposition.

\section{Lemmas}
\label{sec:lemmas}

\subsection{Preliminaries}
To begin, the spatial characteristics in the moving reference frame are given by
\begin{equation}
\label{gcharalt}
\mcY(t,\tau, x,p) = \mcXa(t,\tau, x, p) - v_\alpha \bigg (\mcPa(t, \tau, x, p) \bigg ) t
\end{equation}
with $\mcY(\tau) = x - v_\alpha(p)\tau$. Furthemore, the spatial-momentum support in the original reference frame
and the translated reference frame are given by
		\[\mcS_f^\alpha(t) = \overline{\left \{ (x,p) \in \R^6 : f^\alpha(t,x,p) \neq 0 \right \}},\]
		and 
		\[\mcS_g^\alpha(t) = \overline{\left \{ (x,p) \in \R^6 : g^\alpha(t,x,p) \neq 0 \right \}},\]
respectively. We note that $\mcS_g^\alpha(0) = \mcS_f^\alpha(0)$ for all $\alpha = 1, ..., N$ and define the maximal spatial support amongst all particles by 
		\[\mcR(t) = \max_{\alpha=1,\dots,N}\sup \{|\mcY(t,0,x,p)| : (x,p) \in \mcS_g^\alpha(0) \}.\]
As we must track the asymptotic decay properties of the electromagnetic field, we define for every for $\ell \in \mathbb{N}_0$, the quantity
\begin{align*}
	\mcK_\ell(t) &:= \max_{\alpha=1,\dots,N}\sup_{\substack{|x| \lesssim \mcR(t)\\ |p| \leq \beta} } \biggl (\left |\nabla^\ell_xE(t, x +v_\alpha(p)t) \right | + \left |\nabla^\ell_xB(t,x +v_\alpha(p)t) \right | \biggr )\\
	& \quad +  \max_{\alpha=1,\dots,N}\sup_{\substack{\tau \in [0,t]\\ (x,p) \in \mcS_f^\alpha(\tau)}}  \biggl  ( \left |\nabla^\ell_x E(t, \mcXa(t, \tau, x, p) ) \right | + \left |\nabla^\ell_xB(t, \mcXa(t, \tau, x, p)  )\right | \biggr ).
\end{align*} 
We only consider fields within the interior of the light cone to take full advantage of the decay properties therein. Therefore, the suprema above are necessarily bounded for all time.
Additionally, we must track the time-asymptotic growth rates of spatial and momentum derivatives of $g^\alpha$, which leads us to define for every $\ell\in\N$
	\[
	\mcG^\ell_p(t) = 1+\max_{\alpha=1,\dots,N}\sum_{k=1}^{\ell} \left\| \nabla_p^k g^\alpha (t) \right\|_\infty
	\]
	and
	\[\mcG^\ell_{x,p}(t) =  1+ \max_{\alpha = 1, ..., N} \sum_{\substack{|{\beta_p}| + |{\beta_x}|\leq \ell \\ |\beta_x| > 0}} \| D_p^{\beta_p} D_x^{\beta_x}  g^\alpha(t) \|_\infty,
	\]
the former of which traces the behavior of only momentum derivatives, while the latter groups together all other combinations of mixed spatial and momentum derivatives.
Note that these quantities are nested so that, in particular, $\mcG^n_p(t) \lesssim 1$ implies $\mcG^\ell_p(t) \lesssim 1$ for every $1 \leq \ell \leq n$ with the same property for $\mcG^n_{x,p}(t)$.

With these definitions, we first establish some preliminary results and fundamental estimates that will be crucial to the proof of the main theorem within the next section.
\begin{lemma}
\label{lem:v}
The symmetric, matrix-valued function $\mathbb{A}_\alpha: \bfR^3 \to \bfR^{3 \times 3}$ defined by $\mathbb{A}_\alpha(p) = \nabla v_\alpha(p)$, so that its entrywise representation is
	\[
	\left (\mathbb{A}_\alpha \right)_{ij}(p)
	=
	\frac{ (m_\alpha^2 + |p|^2) \delta_{ij} - p_i p_j }{(m_\alpha^2 + |p|^2)^{3/2}}
	\]
satisfies
$$ |\mathbb{A}_\alpha(p)| \leq (m_\alpha^2 + |p|^2)^{-1/2} \leq m_\alpha^{-1}$$
for all $p \in \bfR^3$ and
$$ \left |\det  \left (\mathbb{A}_\alpha(p) \right ) \right | = m_\alpha^2\left (m_\alpha^2 + |p|^2\right)^{-5/2}.$$
Therefore, the inverse determinant of $\mathbb{A}(p)$ has the representation
\begin{equation}
\label{Ddef}
\mcD_\alpha(p) := m_\alpha^{-2}\left (m_\alpha^2 + |p|^2\right)^{5/2}.
\end{equation}
%
Finally, the functions $v_\alpha(p)$, $v_\alpha^{-1}(q)$, $\mcD_\alpha(p)$ and all of their derivatives are $C^\infty$ and bounded on any compact subset of their respective domains.

\end{lemma}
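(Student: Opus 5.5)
The proof is a direct computation. We write $\langle p\rangle_\alpha := \sqrt{m_\alpha^2+|p|^2}$, so that $v_\alpha(p) = p/\langle p\rangle_\alpha$.

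\emph{Entries of $\mathbb{A}_\alpha$.} Differentiating the quotient componentwise, and using $\partial_{p_j}\langle p\rangle_\alpha = p_j/\langle p\rangle_\alpha$, gives
\[
(\mathbb{A}_\alpha)_{ij}(p) = \frac{\delta_{ij}}{\langle p\rangle_\alpha} - \frac{p_ip_j}{\langle p\rangle_\alpha^3}.
\]
This is the stated formula, and it is manifestly symmetric.

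\emph{Norm bound.} We diagonalize $\mathbb{A}_\alpha$ in an orthonormal basis containing $\hat p$, assuming $p\neq 0$ (the case $p=0$ is immediate). On $p^\perp$ the matrix acts as multiplication by $\langle p\rangle_\alpha^{-1}$, with multiplicity two. In the direction of $p$ it acts as multiplication by $\langle p\rangle_\alpha^{-1} - |p|^2\langle p\rangle_\alpha^{-3} = m_\alpha^2 \langle p\rangle_\alpha^{-3}$. Both eigenvalues are positive, and both are at most $\langle p\rangle_\alpha^{-1}$, since $m_\alpha^2\le \langle p\rangle_\alpha^2$. For a symmetric matrix the operator norm equals the largest eigenvalue in absolute value, so $|\mathbb{A}_\alpha(p)|\le \langle p\rangle_\alpha^{-1}\le m_\alpha^{-1}$.

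The norm is read here as the operator norm. If the paper's $|\cdot|$ were a Frobenius-type norm, a constant factor would enter. I would flag this convention, but the operator norm is clearly the intended reading.

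\emph{Determinant.} The determinant is the product of the eigenvalues:
\[
\langle p\rangle_\alpha^{-2}\cdot m_\alpha^2\langle p\rangle_\alpha^{-3} = m_\alpha^2\langle p\rangle_\alpha^{-5}.
\]
This quantity is positive. Inverting it gives $\mcD_\alpha(p) = m_\alpha^{-2}\langle p\rangle_\alpha^5$, which is \eqref{Ddef}.

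\emph{Smoothness and boundedness.} Since $m_\alpha>0$, the map $p\mapsto \langle p\rangle_\alpha$ is a smooth, strictly positive function on $\bfR^3$. Hence $v_\alpha$ and $\mcD_\alpha$ are $C^\infty$ there, as compositions of smooth maps with powers of a positive function.

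For the inverse map \eqref{Rel_vel_inv}, $v_\alpha^{-1}(q) = m_\alpha q(1-|q|^2)^{-1/2}$ is smooth on the open unit ball $|q|<1$, because $1-|q|^2>0$ there.

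Continuous functions are bounded on compact subsets of their domains. Every derivative of these maps is again continuous on the respective domain, so all derivatives are bounded on any compact subset. For $v_\alpha^{-1}$, a compact subset $K$ of the open ball satisfies $\sup_K|q|<1$, so no blow-up occurs on $K$.

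There is no real obstacle. The only point needing care is the eigenvalue decomposition, which gives the norm bound and the determinant at the same time, without expanding a $3\times3$ determinant by hand.
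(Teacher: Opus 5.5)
Your proposal is correct and is exactly the kind of direct computation the paper has in mind (the paper omits the proof as ``straightforward calculations''). Decomposing along $\hat p$ and $p^\perp$ gives the norm bound and the determinant at once. Your caveat about the norm is also right: the bound $|\mathbb{A}_\alpha(p)|\le (m_\alpha^2+|p|^2)^{-1/2}$ holds for the operator norm but fails for the Frobenius norm. So the operator-norm reading is the one under which the lemma holds as stated.
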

\begin{proof}
The proof consists of straightforward calculations and is therefore omitted.
\end{proof}

\subsection{Properties of the Translated Distribution Function}

With the basic properties of characteristics determined, we introduce some notation relating to the translated distribution functions.
As mentioned in the introduction, for every $\alpha = 1, ..., N$ we let
$$g^\alpha(t,x,p) = f^\alpha(t,x+v_\alpha(p)t, p)$$
so that 
$f^\alpha(t,x, p) = g^\alpha(t,x-v_\alpha(p)t,p) $.
The spatial characteristics $\mcY(t)$ of $g^\alpha$, introduced in \eqref{gcharalt}, satisfy
\begin{equation}
\label{gchar}
 \dot{\mcY}(t) = -t \mathbb{A}_\alpha(\mcPa(t)) K^\alpha \biggl (t, \mcY(t) +  v_\alpha\left (\mcPa(t) \right ) t, \mcPa(t) \biggr).
\end{equation}
In addition, note that $\Vert g^\alpha(t) \Vert_\infty \leq \Vert f_0 \Vert_\infty$ for all $t \geq 0$ and
$$g^\alpha(t,x,p) = 0 \qquad \mathrm{for} \qquad |p| \geq \beta.$$

\begin{lemma}
\label{Xsupp}
For every $\tau \geq 0$, and $(x,p) \in \mcS_g^\alpha(\tau)$ the characteristics satisfy
$$\left | \mcY(t, \tau, x, p)  - (x -v_\alpha(p)\tau) \right | \lesssim \int_\tau^t s \mcK_0(s) ds$$
and 
$$\mcR(t) \lesssim 1 + \int_0^t s \mcK_0(s) ds.$$
In particular, if $\rho_{0,\infty} \equiv 0$, then Theorem \ref{oldT2} further implies
$$\left | \mcY(t, \tau, x, p) \right | \lesssim 1 \qquad \mathrm{and} \qquad 
\mcR(t) \lesssim 1.$$
\end{lemma}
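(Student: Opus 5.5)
The plan is to integrate the characteristic equation \eqref{gchar} in time and bound the integrand using $\mcK_0$. Start from $\mcY(\tau,\tau,x,p) = x - v_\alpha(p)\tau$, which follows from \eqref{gcharalt}. Then for $t \geq \tau$,
$$\mcY(t,\tau,x,p) - (x - v_\alpha(p)\tau) = -\int_\tau^t s\, \mathbb{A}_\alpha(\mcPa(s))\, K^\alpha\bigl(s, \mcY(s)+v_\alpha(\mcPa(s))s, \mcPa(s)\bigr)\, ds .$$
By Lemma \ref{lem:v}, $|\mathbb{A}_\alpha| \leq m_\alpha^{-1}$. Since $|v_\alpha|<1$, we also have $|K^\alpha| \leq |e_\alpha|(|E|+|B|)$.

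The only point needing care is that the field is evaluated along a characteristic. Note that $\mcY(s)+v_\alpha(\mcPa(s))s = \mcXa(s,\tau,\tilde x,p)$, where $\tilde x = x + v_\alpha(p)\tau$ and $(\tilde x,p)\in \mcS_f^\alpha(\tau)$ because $(x,p) \in \mcS_g^\alpha(\tau)$. This is exactly the second supremum appearing in the definition of $\mcK_0(s)$. Hence
$$|E|+|B| \leq \mcK_0(s)$$
at these points, and the first estimate follows with constant $\max_\alpha |e_\alpha|/m_\alpha$. Characteristics remain in the support, and $|\mcPa|\le\beta$ by Theorem \ref{T0}, so there is no issue of leaving the region where $\mcK_0$ controls the fields. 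One subtlety is the $\tau$-range: the definition of $\mcK_0(s)$ takes the supremum over $\tau\in[0,s]$, which is satisfied since $\tau\le s\le t$.

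For $\mcR(t)$, take $\tau=0$ and $(x,p)\in\mcS_g^\alpha(0)$, which lies in $\overline\Gamma_L\times\overline\Gamma_L$. The first estimate then gives
$$|\mcY(t,0,x,p)| \leq L + C\int_0^t s\mcK_0(s)\,ds .$$
Taking the supremum over such $(x,p)$ and over $\alpha$ yields the bound on $\mcR(t)$.

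For the final claim, assume $\rho_{0,\infty}\equiv0$. I would show $\mcK_0(s)\lesssim s^{-3}\ln^8 s$, after which $\int_0^\infty s\mcK_0(s)\,ds<\infty$ and both quantities are $\lesssim 1$. Theorem \ref{oldT2}(a) gives this bound for $|x|\lesssim\gamma s$, so the step I expect to need the most care is checking that every point entering $\mcK_0$ lies in that cone.
\begin{itemize}
\item For the second supremum, Theorem \ref{T0} gives $s-|\mcXa|+2L\geq (s+L)/(2(1+\beta^2))$. Alternatively, $|v_\alpha(p)|\leq \beta/\sqrt{m_\alpha^2+\beta^2}$ is bounded strictly below $\gamma$ if $\gamma$ is chosen appropriately, consistent with \eqref{eq:gamma}.
\item For the first supremum, the points are $x+v_\alpha(p)s$ with $|x|\lesssim\mcR(s)$. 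This is circular, since $\mcR$ is what we are bounding. I would close it by a continuity/bootstrap argument: first use Theorem \ref{T0}, whose bound $|E|+|B|\lesssim s^{-1}(s-|x|+2L)^{-1}$ gives the crude estimate $\mcR(s)\lesssim s$ with a small constant (via $\epsilon_0$). That places the points inside $|x|\lesssim\gamma s$, and the improved bound then applies.
\end{itemize}
If this bootstrap proves delicate, the fallback is to bound only the second supremum, which is what actually appears in the integrand above. That alone suffices for the estimates on $\mcY$ and $\mcR$.
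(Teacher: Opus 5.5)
Your proposal is correct and follows the paper's argument: integrate \eqref{gchar}, use $|\mathbb{A}_\alpha|\le m_\alpha^{-1}$, and bound the Lorentz force along the characteristic by $\mcK_0$; then set $\tau=0$ and take suprema, with the final claim coming from the decay in Theorem \ref{oldT2}, which the paper simply cites (your remark that only the second supremum in $\mcK_0$ is needed is the cleanest way to avoid circularity). Two small fixes: keep one convention for $(x,p)$, since $\mcY(\tau,\tau,x,p)=x-v_\alpha(p)\tau$ means the field is evaluated at $\mcXa(s,\tau,x,p)$ with $(x,p)\in\mcS_f^\alpha(\tau)$, not at a shifted $\tilde x$; and for the cone check rely on the speed bound $|v_\alpha(p)|<\gamma$ for $|p|\le\beta$ (implicitly assumed in Lemma \ref{Krep}) rather than Theorem \ref{T0}, whose estimate only gives $|\mcXa|\le \frac{1+2\beta^2}{2(1+\beta^2)}s+C$ and so fails to place the support in $|x|\le\gamma s$ when $\beta^2<\frac{1}{12}$ (where $\gamma=\frac12$).
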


\begin{proof}
Using \eqref{gchar} we immediately find
$$\left | \dot{\mcY}(t) \right |  \leq t |\mathbb{A}_\alpha(\mcPa(t))| |K^\alpha(t,\mcXa(t), \mcPa(t))| \lesssim t \mcK_0(t),$$
and thus
$$\left | \mcY(t, \tau, x, p)  - (x - v_\alpha(p) \tau) \right | \leq \int_\tau^t \left |  \dot{\mcY}(s, \tau, x, p)  \right | \ ds \lesssim \int_\tau^t s \mcK_0(s) \ ds$$
for fixed $\tau \geq 0$ and $(x,p) \in \bS_g(\tau)$.
Furthermore, this implies
$$\left | \mcY(t, 0, x, p) \right | \lesssim  |x| + \int_0^t s |K^\alpha(s, \mcXa(s), \mcPa(s))| \ ds \lesssim 1 + \int_0^t s \mcK_0(s) \ ds$$
for $(x,p) \in \bS_g(0)$.
The estimate on the spatial radius then follows as
$$\mcR(t) = \max_{\alpha=1,\dots,N}\sup_{(x,p) \in \bS_g(0)} \left | \mcY(t, 0, x, p) \right |  \lesssim 1 + \int_0^t s \mcK_0(s) \ ds.$$
\end{proof}

\subsection{New tools for \eqref{RVMg}}

Throughout this section, we will assume $\mcA_{0,\infty} \equiv 0$, which implies $\rho_{0,\infty} \equiv 0$.
Hence, the spatial support of $g^\alpha$ is uniformly bounded in time by Lemma \ref{Xsupp}, namely
\begin{equation}
\label{Rbound}
\mcR(t) \lesssim 1.
\end{equation}
In order to emphasize the uniform boundedness of the spatial support, we define for use in subsequent lemmas the constant
\[\mcR = \sup_{t\geq 0} \mcR(t). \] 

\begin{lemma}
\label{DF0}
	For every $\alpha = 1, ..., N$ and $k \in \N_0$, we have 
$$	 \infnorm{\nabla_p^{k} \biggl (F^{\alpha,0}(t) - F^{\alpha,0}_\infty \biggr)}  \lesssim \int_t^\infty  \sum_{j=0}^k \sum_{m=0}^j  s^{j-m}\Bigl[ s\mcK_{j-m+1}(s)\mcG_p^{m}(s)+\mcK_{j-m}(s)\mcG_p^{k-j+1}(s)\Bigr] ds$$
	under the assumption that the integral on the right side is finite.
Moreover, if  $\mcG_p^{k-1}(t)\lesssim 1$, then this reduces to 
$$\infnorm{\nabla_p^k \biggl (F^{\alpha,0}(t)-F^{\alpha,0}_\infty\biggr)}\lesssim \int_t^\infty \left\{ \sum_{m=2}^{k+1} s^m \mcK_{m}(s) +\mcK_0(s)\mcG_p^{k+1}(s)+\big(s\mcK_1(s)+\mcK_0(s)\big)\mcG_p^k(s) \right\} ds.$$
\end{lemma}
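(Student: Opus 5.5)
We compute the time derivative of $\nabla_p^k F^{\alpha,0}(t,p)$ using \eqref{RVMg}, bound it pointwise by the integrand on the right side, and integrate from $t$ to $\infty$. Since the integral is assumed finite, $\nabla_p^k F^{\alpha,0}(t)$ is Cauchy as $t\to\infty$ and converges uniformly. Its limit must coincide with $\nabla_p^k F^{\alpha,0}_\infty$, because $F^{\alpha,0}(t)\to F^{\alpha,0}_\infty$ uniformly by Theorem \ref{oldT1} and uniform convergence of derivatives identifies the limit.

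\textbf{Divergence form of the time derivative.} Integrate \eqref{RVMg} in $x$. Write $\tilde K(t,x,p)=K^\alpha(t,x+v_\alpha(p)t,p)$. Then
\[
\partial_t F^{\alpha,0}(t,p)=\frac{e_\alpha}{m_\alpha}\int \Bigl( t\,\A_\alpha(p)\tilde K\cdot\nabla_x g^\alpha-\tilde K\cdot\nabla_p g^\alpha\Bigr)\,dx .
\]
In the first term we integrate by parts in $x$. The boundary terms vanish by the compact spatial support of $g^\alpha$, using \eqref{Rbound}. This gives
\[
\partial_t F^{\alpha,0}=-\frac{e_\alpha}{m_\alpha}\int\Bigl(t\,\A_\alpha(p)^{T}:\nabla_x\tilde K\; g^\alpha+\tilde K\cdot\nabla_p g^\alpha\Bigr)dx .
\]
The magnetic part of $\tilde K$ contains $v_\alpha(p)\times B$, whose $x$-divergence after contraction with $\A_\alpha$ still involves only $\nabla_x B$, so it is treated the same way.

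\textbf{Applying $\nabla_p^k$.} Apply $\nabla_p^k$ and use the Leibniz rule. The key observation is the chain rule
\[
\nabla_p\bigl[\nabla_x^\ell E(t,x+v_\alpha(p)t)\bigr]=t\,\A_\alpha(p)\,\nabla_x^{\ell+1}E(t,x+v_\alpha(p)t).
\]
Hence each $p$-derivative falling on the field gains a factor $t$ and one more spatial derivative. Derivatives falling on $\A_\alpha$ or $v_\alpha$ are harmless by Lemma \ref{lem:v}.

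For the first term, suppose $j-m$ derivatives hit $\tilde K$, where $0\le j\le k$ counts the derivatives not landing on $g^\alpha$ in some grouping. That factor contributes $s\cdot s^{j-m}\mcK_{j-m+1}(s)$. The remaining derivatives on $g^\alpha$ are bounded by $\mcG_p^m(s)$ (taking $\mcG_p^0=1$). We use that the $x$-integration runs over a region of bounded measure, by \eqref{Rbound}, and that $|p|\le\beta$. The supremum of the field over the support is controlled by $\mcK_\ell$, since the points $x+v_\alpha(p)t$ with $|x|\lesssim\mcR$ are exactly those in its definition.

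For the second term, $j-m$ derivatives hit $\tilde K$, giving $s^{j-m}\mcK_{j-m}$. The rest fall on $\nabla_p g^\alpha$, costing at most $\mcG_p^{k-j+1}$.

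Collecting these terms gives $|\partial_t\nabla_p^kF^{\alpha,0}|$ bounded by the stated integrand, up to relabeling indices. This relabeling, namely matching the Leibniz combinatorics precisely to the double sum $\sum_j\sum_m$, is the main bookkeeping obstacle. I would simply overcount, since all terms are nonnegative and $\mcG$ is nested.

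\textbf{Reduction when $\mcG_p^{k-1}\lesssim 1$.} Every $\mcG_p^{m}$ with $m\le k-1$ is $\lesssim1$, and only $\mcG_p^k$ and $\mcG_p^{k+1}$ survive.

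In the first group, the terms with $j-m\ge1$ can be reindexed as $s^{\ell}\mcK_{\ell}$ for $2\le\ell\le k+1$. The term $j=m$ with $m=k$ gives $s\mcK_1\mcG_p^k$.

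In the second group, $j=0$ gives $\mcK_0\mcG_p^{k+1}$ and $j=1$ gives $(\mcK_0+s\mcK_1)\mcG_p^k$. The remaining terms with $j\ge2$ have $\mcG_p^{k-j+1}\lesssim1$ and are of the form $s^{j-m}\mcK_{j-m}$. Those with $j-m\ge2$ are already in the sum $\sum_{m=2}^{k+1} s^m\mcK_m$. Those with $j-m\in\{0,1\}$ are absorbed into $\mcK_0\mcG_p^k$ and $s\mcK_1\mcG_p^k$, since $\mcG\ge1$.

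This yields the reduced estimate.
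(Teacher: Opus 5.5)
Your proposal is correct and follows the paper's proof essentially step for step. You integrate \eqref{RVMg} in $x$, integrate by parts in the $t\A_\alpha\nabla_x$ term, and apply the Leibniz rule with $|D_p^\nu K^\alpha(t,x+v_\alpha(p)t,p)|\lesssim\sum_{\ell\le|\nu|}t^\ell\mcK_\ell(t)$ on the bounded spatial support. You then integrate the bound on $\partial_t\nabla_p^kF^{\alpha,0}$ over $[t,\infty)$ and absorb lower-order terms using $\mcG_p\ge1$. Your explicit identification of the limit of $\nabla_p^kF^{\alpha,0}(t)$ with $\nabla_p^kF^{\alpha,0}_\infty$ is a small addition the paper leaves implicit. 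In the reduction you did not list the first-group terms with $j=m<k$; they are absorbed into $s\mcK_1\mcG_p^k$ exactly as in your treatment of the second group.
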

\begin{proof}
We follow the ideas contained in the proof of \cite[Lemma 3.6]{Pan-BA2025}.
Recall that
$$F^{\alpha, 0}(t, p) 
	= \int_{\R^3}  g^\alpha \left (t, y, p \right ) \ dy.$$
Because the momentum support of $g^\alpha$ is uniformly bounded, we take $|p| \leq \beta$ throughout the proof.	
Using \eqref{RVMg} we find
\begin{align*}
	\partial_t F^{\alpha, 0} &(t, p)
	=	 \int_{\R^3}  \partial_tg^\alpha \left (t, x, p \right ) dx\\
	& =	\frac{e_\alpha}{m_\alpha} \int_{\R^3}  K^\alpha(t,x+v_\alpha(p)t, p)\cdot\big( t \mathbb{A}_\alpha(p) \nabla_{x}-\nabla_{p}\big) g^\alpha(t,x,p) \ dx\\
	& =	-\frac{e_\alpha}{m_\alpha} \int_{\R^3}  \Big( t\ \mathrm{tr}\big\{\mathbb{A}_\alpha(p)\nabla_{x} K^\alpha(t,x+v_\alpha(p)t, p)\big\} g^\alpha(t,x,p)+K^\alpha(t,x+v_\alpha(p)t, p) \cdot\nabla_{p}g^\alpha(t,x,p)\Big) \ dx
\end{align*}
upon integrating by parts in $x$ in the first term.
Taking any $k$th order momentum derivative with multi-index $\eta$ so that $|\eta|=k$, we have
\begin{align*}
	\partial_tD_p^\eta F^{\alpha, 0}(t, p)
	= -\frac{e_\alpha}{m_\alpha} \int_{\R^3}  D_p^\eta \Big ( &t \ \mathrm{tr}\big\{\mathbb{A}_\alpha(p)\nabla_{x} K^\alpha(t,x+v_\alpha(p)t, p)\big\} \, g^\alpha(t,x,p) \\
	& + K^\alpha(t,x+v_\alpha(p)t, p) \cdot\nabla_{p}g^\alpha(t,x,p) \Big ) \, dx.
\end{align*}
Applying this momentum derivative to the product of terms in the integrand yields derivatives ranging up to order $k$ on each, so that
\begin{align*}
    \left |\partial_tD_p^\eta F^{\alpha, 0}(t, p) \right | \lesssim
	\int_{\R^3}     \Bigg[&
		\:t\!\! \sum_{\substack{\nu_1+\nu_2+\nu_3 \preceq \eta\\ |\nu_1|+|\nu_2|+|\nu_3|=k}} \!\!\!\sum_{i,j=1}^3 D_p^{\nu_1}D_{x_j} K_i^\alpha(t,x+v_\alpha(p)t, p) D_p^{\nu_2}g^\alpha(t,x,p) \: D_p^{\nu_3} \mathbb{A}_{\alpha}^{i,j}(p)  \\
		& + \sum_{\substack{\nu \preceq \eta\\ 0 \leq |\nu| \leq k}} \!\!\!
 \sum_{i=1}^3 D_p^\nu \biggl (K_i^\alpha(t,x+v_\alpha(p)t, p) \biggr ) D_p^{\eta - \nu} D_{p_i} g^\alpha(t,x,p) 
    \Bigg] dx.
\end{align*}
Due to the bounded spatial support of $g^\alpha$ given by \eqref{Rbound}, the integral here is taken over $|x| \leq \mcR(t) \lesssim 1$. As the derivatives of $v_\alpha(p)$ and $\mathbb{A}_\alpha(p)$ of any order are uniformly bounded for $|p| \leq \beta$ and 
$$\left |D_p^\nu \biggl (K^\alpha(t,x+v_\alpha(p)t, p) \biggr ) \right | = \left |D_p^\nu E(t,x+v_\alpha(p)t) + D_p^\nu \biggl ( v_\alpha(p) \times B(t,x+v_\alpha(p)t) \biggr ) \right  | \lesssim \sum_{\ell=0}^{|\nu|}t^\ell \mathcal{K}_\ell(t),$$ 
taking the supremum over all momenta gives
\begin{align*}
	\infnorm{\partial_t  D_p^\eta F^{\alpha,0} (t)} &\lesssim  \sum_{\substack{\nu_1+\nu_2+\nu_3 \preceq \eta\\ |\nu_1|+|\nu_2|+|\nu_3|=k}}  \sum_{b=0}^{|\nu_1|}t^{b+1} \mathcal{K}_{b+1}(t) \mcG_p^{|\nu_2|}(t) +
 \sum_{\substack{\nu \preceq \eta\\ 0 \leq |\nu| \leq k}} \sum_{b=0}^{|\nu|} t^{b}\mcK_{b}(t) \mcG_p^{|\eta - \nu|+1}(t) : = I + II.
\end{align*}
To simplify the first term, we let $|\nu_3|=k-|\nu_1|-|\nu_2|$, $|\nu_1|=j-m$ for $j\in\{0,...,k\}$, and $|\nu_2|=m\in\{0,...,j\}$ to find 
\begin{align*}
I \lesssim \sum_{j=0}^k \sum_{m=0}^j  t^{j-m+1} \mcK_{j-m+1}(t)\mcG_p^{m}(t).
\end{align*}
To simplify the second sum, recall that $|\eta| = k$ and let $|\nu| = j$, so that it yields
$$II  \lesssim \sum_{j = 0}^k \sum_{b=0}^j t^{b}\mcK_{b}(t) \mcG_p^{k-j+1}(t) = \sum_{j = 0}^k \sum_{m=0}^j t^{j-m}\mcK_{j-m}(t) \mcG_p^{k-j+1}(t)$$
upon reindexing via $m=j-b$.
Putting these together gives
\begin{align*}
	\infnorm{\partial_t D_p^\eta  F^{\alpha,0} (t)} \lesssim \sum_{j=0}^k \sum_{m=0}^j  t^{j-m}\Big( t\, \mcK_{j-m+1}(t)\mcG_p^{m}(t)+\mcK_{j-m}(t)\mcG_p^{k-j+1}(t)\Big).
\end{align*}
Hence, for $\tau\geq t$, we can write
\begin{align*}
    \infnorm{ D_p^\eta  F^{\alpha,0} (t) - D_p^\eta  F^{\alpha,0} (\tau)}=\infnorm{\int_t^\tau \partial_t D_p^\eta  F^{\alpha,0} (s)\,ds} \leq \int_t^\tau \infnorm{\partial_t D_p^\eta  F^{\alpha,0} (s)}\, ds.
\end{align*}
Thus, for integrands that decay sufficiently fast, we let $\tau\rightarrow\infty$ and find
\begin{align*}
    \infnorm{ D_p^\eta  F^{\alpha,0} (t) - D_p^\eta  F^{\alpha,0}_\infty} \lesssim \int_t^\infty  \sum_{j=0}^k \sum_{m=0}^j  s^{j-m}\Big( s\mcK_{j-m+1}(s)\mcG_p^{m}(s)+\mcK_{j-m}(s)\mcG_p^{k-j+1}(s)\Big) ds.
\end{align*}
Summing over all such derivatives with $|\eta|=k$ gives the first result for any $\alpha=1,...,N$.

Assuming $\mcG_p^{k-1}(t)\lesssim 1$   and breaking up the sums to eliminate such terms, this estimate immediately simplifies to
\begin{align*}
\infnorm{\nabla_p^{k} \biggl (F^{\alpha,0}(t) - F^{\alpha,0}_\infty \biggr)}  \lesssim  \int_t^\infty  \Biggl [&\ \sum_{j=0}^k \sum_{m=0}^{j-1}  s^{j-m+1}\mcK_{j-m+1}(s) +  s\mcK_{1}(s) + s\mcK_1(s) \mcG_p^k(s)\\
& + 
\sum_{j=2}^k \sum_{m=0}^{j}  s^{j-m} \mcK_{j-m}(s) + \mcK_0(s) \mcG_p^{k+1}(s) +   \left(\mcK_0(s) + s\mcK_1(s) \right ) \mcG_p^{k}(s)\Biggr] ds.
\end{align*}
Reindexing the remaining sums and using $\mcG_p^k(t) \geq 1$, which implies
$$\mcK_0(s) + s\mcK_1(s) \leq \left(\mcK_0(s) + s\mcK_1(s) \right ) \mcG_p^{k}(s),$$
yields the stated conclusion.
\end{proof}

\begin{lemma}
\label{DFell}
For every $\alpha = 1, \hdots, N$, $k \in \N_0$, and $|\delta|\in\mathbb{N}$, the spatial density $\nabla_p^k F^{\alpha,\delta}(t,p)$ satisfies
$$\infnorm{\nabla_p^k \rndP{F^{\alpha,\delta}(t)- F^{\alpha,\delta}_\infty}} \lesssim \int_t^\infty \sum_{j=0}^{k} \sum_{m=0}^j s^{j-m} \Big( s \mcK_{j-m+1}(s)\mcG_p^m(s) +s \mcK_{j-m}(s)\mcG_p^m(s) +\mcK_{j-m}(s)\mcG_p^{k-j+1}(s) \Big) ds$$
under the assumption that the integral on the right side is finite.
Moreover, if  $\mcG_p^{k-1}(t)\lesssim 1$, this reduces to 
\[ \infnorm{\nabla_p^k \rndP{F^{\alpha,\delta}(t)-F^{\alpha,\delta}_\infty}} \lesssim \int_t^\infty \Bigg[ \sum_{m=0}^{k} s^{m+1} \mcK_{m}(s) + s^{k+1}\mcK_{k+1}(s)+\big( \mcK_0(s)+s\mcK_1(s)\big)\mcG_p^k(s) +\mcK_0(s)\mcG_p^{k+1}(s)\Bigg] ds \]
\end{lemma}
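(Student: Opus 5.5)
We mirror the proof of Lemma \ref{DF0}, the only new feature being the weight $(-y)^\delta$ in
$$F^{\alpha,\delta}(t,p)=\int (-y)^\delta g^\alpha(t,y,p)\,dy.$$
Throughout we restrict to $|p|\le\beta$ and use \eqref{Rbound}, so that the integrals run over $|y|\le\mcR\lesssim 1$. On this set every weight $y^\delta$ and all its derivatives are bounded, so they only affect the implicit constants.

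First differentiate in time using \eqref{RVMg}:
$$\partial_t F^{\alpha,\delta}=\frac{e_\alpha}{m_\alpha}\int (-y)^\delta K^\alpha(t,y+v_\alpha(p)t,p)\cdot\bigl(t\A_\alpha(p)\nabla_y-\nabla_p\bigr)g^\alpha\,dy.$$
Integrate by parts in $y$ in the first term. The derivative now falls either on $K^\alpha$, giving the trace term $t\,\mathrm{tr}\{\A_\alpha\nabla_x K^\alpha\}(-y)^\delta g^\alpha$ exactly as before, or on the weight $(-y)^\delta$. The latter produces a new term
$$t\,\sum_i\bigl(\A_\alpha(p) K^\alpha\bigr)_i\,\partial_{y_i}(-y)^\delta\, g^\alpha,$$
which contains no spatial derivative of the field but still carries the factor $t$. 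This is the source of the extra summand $s\mcK_{j-m}(s)\mcG_p^m(s)$ in the statement. No boundary terms arise, since $g^\alpha$ is compactly supported in $y$.

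Next apply $D_p^\eta$ with $|\eta|=k$ and distribute by Leibniz over the three types of terms.
\begin{itemize}
\item For the trace term, use $|D_p^\nu(\nabla_xK^\alpha(t,y+v_\alpha(p)t,p))|\lesssim\sum_{b\le|\nu|}t^b\mcK_{b+1}(t)$ together with bounded derivatives of $\A_\alpha$. As in Lemma \ref{DF0}, this gives $\sum_{j,m}t^{j-m+1}\mcK_{j-m+1}\mcG_p^m$.
\item For the new weight term, the same bookkeeping with $\mcK_{b}$ in place of $\mcK_{b+1}$ gives $\sum_{j,m}t^{j-m+1}\mcK_{j-m}\mcG_p^m$.
\item For the $\nabla_p g^\alpha$ term, the weight does not interact and we obtain $\sum_{j,m}t^{j-m}\mcK_{j-m}\mcG_p^{k-j+1}$ verbatim.
\end{itemize}
The same reindexing $|\nu_1|=j-m$, $|\nu_2|=m$ used in Lemma \ref{DF0} organizes these sums. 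Integrating $\partial_tD_p^\eta F^{\alpha,\delta}$ over $[t,\tau]$ and letting $\tau\to\infty$ then yields the first estimate, under the finiteness assumption, which also guarantees that the limit $F^{\alpha,\delta}_\infty$ exists (cf. Remark \ref{Finf}).

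For the reduced form, assume $\mcG_p^{k-1}\lesssim1$ and retain only the factors $\mcG_p^k$ and $\mcG_p^{k+1}$.
\begin{itemize}
\item In the first group, $\mcG_p^m$ with $m=j=k$ appears only with $s\mcK_1\mcG_p^k$. The remaining terms are $s^{r+1}\mcK_{r+1}$ with $r\le k$, that is $s^{k+1}\mcK_{k+1}$ together with lower powers.
\item In the second group, the only term with a possibly unbounded $\mcG_p^k$ is $s\mcK_0\mcG_p^k$. The others are $s^{r+1}\mcK_r$ for $0\le r\le k$, which gives $\sum_{m=0}^k s^{m+1}\mcK_m$.
\item In the third group, $j=0$ gives $\mcK_0\mcG_p^{k+1}$ and $j=1$ gives $(\mcK_0+s\mcK_1)\mcG_p^k$. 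For $j\ge2$ the terms are bounded by $s^{r}\mcK_r$ with $r\le k$.
\end{itemize}
The term $s\mcK_0\mcG_p^k$ is not listed separately in the statement. We handle it by keeping $\mcG_p^k\lesssim1$ only where allowed; in fact, since $\mcG_p^k\ge1$, the lower-order terms $s^{r}\mcK_r$ and $s^r\mcK_{r}$ for $r\le k$ are absorbed by $\sum_{m\le k}s^{m+1}\mcK_m$.

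The main obstacle is this last step. The term $s\mcK_0\mcG_p^k$ must be dominated by the displayed right side, and this requires $\mcG_p^k$ to be bounded. We do not have that: only $\mcG_p^{k-1}$ is assumed bounded, so $\mcG_p^k$ may grow. My first attempt would be to check whether the weight term can be rewritten to avoid the $\nabla_p^k g$ factor, for instance by moving that derivative onto $y^\delta$-free factors. Failing that, I would accept an additional term $s\mcK_0\mcG_p^k$ in the reduced bound, which is harmless in later applications since $\mcK_0$ decays rapidly.
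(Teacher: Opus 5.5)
Your argument for the first estimate is the same as the paper's. You differentiate $F^{\alpha,\delta}$ in time via \eqref{RVMg} and integrate by parts in $y$, so the derivative falls either on $K^\alpha$ (the trace term of Lemma \ref{DF0}) or on the weight $(-y)^\delta$ (the new term, which carries a factor $t$ but no field derivative). You then apply $D_p^\eta$ with Leibniz, reindex, and integrate over $[t,\infty)$.

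Your concern about the reduced form is correct, and it is not an artifact of your approach. The $j=m=k$ summand of the new weight group is $s\mcK_0(s)\mcG_p^k(s)$. This term is absent from the displayed reduced bound, and it cannot be absorbed when only $\mcG_p^{k-1}\lesssim 1$ is assumed.

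The paper's proof only says the simplification proceeds ``in the same manner as for Lemma \ref{DF0}''. In Lemma \ref{DF0} this group does not exist, so the omission is an oversight in the statement. Your fallback is the right fix: the reduced bound should have $\bigl((1+s)\mcK_0(s)+s\mcK_1(s)\bigr)\mcG_p^k(s)$ in place of $\bigl(\mcK_0(s)+s\mcK_1(s)\bigr)\mcG_p^k(s)$.

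Rewriting the weight term to avoid $\nabla_p^k g^\alpha$ will not help. The bound is taken pointwise in $p$, and $F^{\alpha,\delta}(t,p)$ has no $p$-integration onto which that derivative could be shifted.

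The correction is harmless for the rest of the paper, for a more specific reason than the decay of $\mcK_0$.
\begin{itemize}
\item In the base case, the paper applies the full form and keeps $s\mcK_0(s)\mcG_p^1(s)$ explicitly.
\item In the induction step, every application of this lemma comes after \eqref{Gnp2} gives $\mcG_p^{n+2}\lesssim 1$. There $s\mcK_0\mcG_p^k\lesssim s\mcK_0$ is already part of $\sum_{m\le k}s^{m+1}\mcK_m$.
\end{itemize}
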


\begin{remark}
We note that the difference between Lemmas \ref{DF0} and \ref{DFell} is one additional term appearing with the integration by parts in $x$ within the latter result.
\end{remark}

\begin{proof}
	We follow the ideas contained in the proof of \cite[Lemma 2.6]{Pan-BA2025}. First, recall from \eqref{eq:F-al} that
$$	
F^{\alpha, \delta}(t, p) 
= \int_{\R^3} (-x)^\delta g^\alpha \left (t, x, p \right )  dy.
$$
Taking a time derivative and using \eqref{RVMg}, we find
\begin{align*}
	\partial_t F^{\alpha, \delta}(t, p) &= \int_{\R^3} (-x)^\delta \partial_t g^\alpha (t, x, p ) dx\\
	&=
	\frac{e_\alpha}{m_\alpha} \int_{\R^3} (-x)^\delta \Big[ K^\alpha(t,x+v_\alpha(p)t, p)\cdot\big( t \mathbb{A}_\alpha(p) \nabla_{x}-\nabla_{p}\big) g^\alpha(t,x,p) \Big] \ dx\\
	&=
	-\frac{e_\alpha}{m_\alpha} \int_{\R^3}  \Big( t\, \mathrm{tr}\Big\{(-x)^\delta \nabla_{x} K^\alpha(t,x+v_\alpha(p)t, p) \mathbb{A}_\alpha(p) \Big\} g^\alpha(t,x,p)\\
	&\quad +t\, \mathrm{tr}\Big\{ \left(\nabla_{x}(-x)^\delta \right) K^\alpha(t,x+v_\alpha(p)t, p) \mathbb{A}_\alpha(p) \Big\}g^\alpha +(-x)^\delta K^\alpha(t,x+v_\alpha(p)t, p) \cdot\nabla_{p}g^\alpha\Big) \, dx
\end{align*}
upon integrating by parts in $x$.
Applying any $k$th order momentum derivative with multi-index $\eta$ satisfying $|\eta|=k$ gives
\begin{align*}
	\partial_t D_p^\eta F^{\alpha, \delta}(t, p) &=-\frac{e_\alpha}{m_\alpha} \int_{\R^3}  D_p^\eta \Big( t\, \mathrm{tr}\Big\{(-x)^\eta \nabla_{x} K^\alpha(t,x+v_\alpha(p)t, p) \mathbb{A}_\alpha(p) \Big\} g^\alpha(t,x,p)\\
	&\quad +t\, \mathrm{tr}\Big\{ \left(\nabla_{x}(-x)^\eta\right) K^\alpha(t,x+v_\alpha(p)t, p) \mathbb{A}_\alpha(p) \Big\}g^\alpha +(-x)^\delta K^\alpha(t,x+v_\alpha(p)t, p) \cdot\nabla_{p}g^\alpha\Big) \, dx.
\end{align*}

Taking the supremum over all $|p|\leq \beta$, we note that derivatives of $v_\alpha(p)$ and $\mathbb{A}_\alpha(p)$ are uniformly bounded, and the spatial moments $(-x)^\eta$ are bounded on the support of $g^\alpha$ due to \eqref{Rbound}. Additionally, recalling the estimate $|D_p^{\nu} K^\alpha| \lesssim \sum_{\ell=0}^{|\nu|}t^\ell \mathcal{K}_\ell(t)$ from the previous lemma, we find
\begin{align*}
    \infnorm{D_p^\eta \partial_t F^{\alpha,\delta} (t)} & \lesssim 
    \sum_{\substack{\nu_1+\nu_2+\nu_3 \preceq \eta \\ |\nu_1|+|
    \nu_2|+|\nu_3|=k}} \sum_{b=0}^{|\nu_1|} \Big ( t^{b+1}\mathcal{K}_{b+1}(t) \mcG_p^{|\nu_2|}(t) + t^{b+1} \mathcal{K}_{b}(t) \mcG_p^{|\nu_2|}(t) \Big )\\
    & \qquad  +  \sum_{\substack{\nu \preceq \eta\\ 0 \leq |\nu| \leq k}} \sum_{b=0}^{|\nu|} t^{b}\mcK_{b}(t) \mcG_p^{|\eta - \nu|+1}(t) 
\end{align*}
where the last term has been estimated as in the previous lemma.
Letting $|\nu_3|=k-|\nu_1|-|\nu_2|$, $|\nu_2|=m\in\{0,...,j\}$, and $|\nu_1|=j-m$ for $j\in\{0,...,k\}$ (i.e. $0\leq b\leq m\leq j\leq k$) within the first two sums and reindexing the last sum as in the previous lemma, this becomes
\begin{align*}
	\infnorm{\partial_t D_p^\eta  F^{\alpha,\delta} (t)} \lesssim \sum_{j=0}^{k} \sum_{m=0}^j  t^{j-m}\Big( t\,\mcK_{j-m+1}(t)\mcG_p^{m}(t)+t\,\mcK_{j-m}(t)\mcG_p^{m}(t)+\mcK_{j-m}(t)\mcG_p^{k-j+1}(t)\Big).
\end{align*}
Hence, for $\tau\geq t$, we can write
\begin{align*}
    \infnorm{ D_p^\eta  F^{\alpha,\delta} (t) - D_p^\eta  F^{\alpha,\delta} (\tau)}=\infnorm{\int_t^\tau \partial_t D_p^\eta  F^{\alpha,\ell} (s)\,ds} \leq \int_t^\tau \infnorm{\partial_t D_p^\eta  F^{\alpha,\ell} (s)}\, ds.
\end{align*}
Thus, for integrands that decay sufficiently fast, we let $\tau\rightarrow\infty$ and find
\begin{align*}
    \infnorm{ D_p^\eta  F^{\alpha,\delta} (t) - D_p^\eta  F^{\alpha,\delta}_\infty} \lesssim \int_t^\infty  \sum_{j=0}^{k} \sum_{m=0}^j  s^{j-m}\Big( s\,\mcK_{j-m+1}(s)\mcG_p^{m}(s)+s\,\mcK_{j-m}(s)\mcG_p^{m}(s)+\mcK_{j-m}(s)\mcG_p^{k-j+1}(s)\Big) ds.
\end{align*}
Summing over all such derivatives with $|\eta|=k$ gives the first result for any $\alpha=1,...,N$.
Finally, assuming $\mcG_p^{k-1}(t)\lesssim 1$  and breaking up the sums to eliminate such terms, this estimate simplifies in the same manner as for Lemma \ref{DF0}.
\end{proof}

\begin{lemma}[Refined Field Representation]
\label{Krep}
Recall the definitions $\mcU := \frac{1}{2} + \frac{\gamma}{1+\gamma} < 1$ and $\hat{u} = \frac{u}{|u|}$.
Then, for $t$ sufficiently large and $x \in \bfR^3$ satisfying $|x| \lesssim \gamma t$, the fields admit the following representation
$$	E(t,x) = E_T(t,x) + E_S(t,x) \qquad \mathrm{and} \qquad
	B(t,x) = B_T(t,x) + B_S(t,x) $$
where the $T$ component satisfies
\begin{equation}
\label{ETrep}
E_T(t,x) = \frac{1}{t^2}\int_{|u|\leq \mcU} \int_{|z| \leq \mcR} \mcH_T^E \left (t,u,z, \frac{x+ut-z}{t(1-|u|)} \right ) \, dz \, \frac{du}{|u|^2(1-|u|)^3}
\end{equation}
with 
\begin{equation}
\label{HT}
\mcH_T^E(t,u,z,q) = \kappa^E_T{\left(\hat{u},q\right)} \mcH_\rho(t(1-|u|), z, q)
\end{equation}
and
\begin{equation}
\label{kappaT}
\kappa_T^E(\omega,q)=\frac{(\omega + q)(1-|q|^2)}{(1+q\cdot\omega)^2}.
\end{equation}
Additionally, the $S$ component satisfies
\begin{equation}
\label{ESrep}
E_S(t,x) = \frac{1}{t}\int_{|u|\leq \mcU}\int_{|z| \leq \mcR}\mcH^E_S\left(t,u,z,\frac{x+ut-z}{t(1-|u|)}\right)dz\frac{du}{{|u|(1-|u|)}^3}.
\end{equation}
with
\begin{equation}
\label{HS}
\mcH^E_S(t,u,z,q) = \sum_{\alpha = 1}^N e_\alpha^2 \kappa^E_S(\hat{u}, v_\alpha^{-1}(q)) \mcD_\alpha \rndP{v_\alpha^{-1}(q)} K^\alpha(t(1-|u|),z+qt(1-|u|))g^\alpha{\big(t(1-|u|),z,v_\alpha^{-1}(q)\big)}
\end{equation}
and
\begin{equation}
\label{kappaS}
\kappa^E_S(\omega,p)=\nabla_p\cdot \left ( \frac{\omega+v_\alpha(p)}{1+v_\alpha(p)\cdot\omega} \right ).
\end{equation}
For the magnetic field $B(t,x)$, there is an analogous representation that merely replaces $\hat{u} + q$ within $\kappa^E_T$ by $\hat{u} \times q$ and $\hat{u}+v_\alpha(p)$ within $\kappa^E_S$ by $\hat{u} \times v_\alpha(p)$ in order to form the kernels $\kappa^B_T$ (whence $\mcH^B_T$) and $\kappa^B_S$ (whence $\mcH^B_S$), respectively.
\end{lemma}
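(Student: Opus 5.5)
We start from the Glassey--Strauss decomposition of the electric field, which we take as known from \cite{GS2,GS}. For $t$ large and $|x|\lesssim \gamma t$ it reads $E = E_{\mathrm{data}} + E_T + E_S$. Here
\[
E_T(t,x)=\int_{|y-x|\le t}\int \frac{(\omega+v_\alpha(p))(1-|v_\alpha(p)|^2)}{(1+v_\alpha(p)\cdot\omega)^2}\, f^\alpha(t-|y-x|,y,p)\,dp\,\frac{dy}{|y-x|^2},
\]
with $\omega=(y-x)/|y-x|$ and a sum over $\alpha$ weighted by $e_\alpha$. The term $E_S$ is the analogous integral with weight $|y-x|^{-1}$, the kernel $\nabla_p\cdot\bigl(\tfrac{\omega+v_\alpha}{1+v_\alpha\cdot\omega}\bigr)$, and the factor $K^\alpha f^\alpha$. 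Since $E_0,B_0,f_0^\alpha$ are supported in $\overline{\Gamma}_L$, the term $E_{\mathrm{data}}$ is supported near $|x|=t$ and vanishes for $|x|\lesssim \gamma t$, $t$ large. This is why only the $T$ and $S$ parts remain.

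The main step is a change of variables in each integral. First write $y-x=tu$, so $|y-x|=t|u|$ and $dy=t^3du$, and the retarded time becomes $s=t(1-|u|)$. Next replace $f^\alpha$ by $g^\alpha$ through $f^\alpha(s,y,p)=g^\alpha(s,y-v_\alpha(p)s,p)$. Finally change the momentum variable $p\mapsto z=y-v_\alpha(p)s$. This gives $v_\alpha(p)=\frac{x+ut-z}{t(1-|u|)}=:q$ and $dp=\mcD_\alpha(v_\alpha^{-1}(q))\,[t(1-|u|)]^{-3}dz$, exactly as in the derivation of \eqref{rhop}.

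Collecting the powers for $E_T$: $t^3/(t^2|u|^2)$ times $t^{-3}(1-|u|)^{-3}$ gives the prefactor $t^{-2}|u|^{-2}(1-|u|)^{-3}$. The integrand becomes $\kappa^E_T(\hat u,q)\sum_\alpha e_\alpha \mcD_\alpha g^\alpha = \kappa_T^E(\hat u,q)\,\mcH_\rho(t(1-|u|),z,q)$, which is \eqref{ETrep}. For $E_S$ the weight is $|y-x|^{-1}$ instead of $|y-x|^{-2}$, so one fewer power of $t$ and of $|u|$ appears, giving \eqref{ESrep}. The Lorentz force is evaluated at $y=z+qt(1-|u|)$, and the factor $e_\alpha^2$ arises from $e_\alpha K^\alpha$ with $K^\alpha$ already containing $e_\alpha$. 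If the Glassey--Strauss $S$-kernel carries a $\nabla_p$ acting on $K^\alpha f$, we integrate by parts in $p$ first so that it falls on the kernel, as in \eqref{kappaS}. The magnetic representation is identical, with the kernels of \cite{GS2} for $B$ replaced by the cross-product versions.

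Two restrictions on the domains of integration must be checked. The $z$-domain $|z|\le\mcR$ follows from \eqref{Rbound}, since $g^\alpha$ vanishes otherwise. The $u$-domain $|u|\le \mcU$ requires the support claim: the integrand is nonzero only when $q=v_\alpha(p)$ with $|p|\le\beta$, which gives $|q|\le \beta/\sqrt{m_\alpha^2+\beta^2}\le\gamma$ after normalization (possibly enlarging $\gamma$). We must also have $|q|<1$ so that $v_\alpha^{-1}$ is defined. From $|x|\lesssim\gamma t$ and $|q|\le\gamma$ we get $t|u| \le |x| + |q|t(1-|u|) + \mcR$, i.e. $|u|(1-\gamma)\le \gamma+\gamma+o(1)$. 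That alone is not sharp enough, so the bound must be tuned. Choose the implicit constant in $|x|\lesssim\gamma t$ as $|x|\le \tfrac{\gamma}{2}t$ (or similar), and use $|u|\le |x|/t + \gamma(1-|u|)+\mcR/t$. This gives $|u|\le \frac{|x|/t+\gamma}{1+\gamma}+o(1) < \tfrac12+\tfrac{\gamma}{1+\gamma}=\mcU$.

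I expect the main obstacle to be this constant bookkeeping: verifying that $|u|$ stays strictly below $\mcU$, that $1-|u|$ is bounded away from zero (so the Jacobian and the Taylor expansions in $q$ stay legitimate), and that the $S$-term integration by parts and the $\mathcal D_\alpha$ Jacobian match the stated formula exactly.
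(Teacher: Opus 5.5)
Your plan follows the paper's proof step for step. The steps are the Glassey--Strauss decomposition; the vanishing of $E_{\mathrm{data}}$, because the sphere $|y-x|=t$ lies in $|y|\ge(1-\gamma)t>L$; the substitutions $u=(y-x)/t$, $f^\alpha\to g^\alpha$ and $p\mapsto z$ (with $dp=\mcD_\alpha\,[t(1-|u|)]^{-3}dz$); integration by parts in $p$ for $E_S$; and the support argument forcing $|u|\le\mcU$.

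\textbf{The slip in the $u$-bound.} From $t|u|\le|x|+\gamma t(1-|u|)+\mcR$ you get $(1+\gamma)|u|\le |x|/t+\gamma+\mcR/t$, not $(1-\gamma)|u|\le 2\gamma+o(1)$. With the full range $|x|\le\gamma t$ this already gives
$|u|\le\frac{2\gamma}{1+\gamma}+\frac{\mcR}{(1+\gamma)t}\le\mcU$ once $t\ge 2\mcR/(1-\gamma)$, because $\frac{\gamma}{1+\gamma}<\frac12$. Shrinking to $|x|\le\frac{\gamma}{2}t$ is therefore unnecessary. It would also prove the representation on a smaller region than the lemma asserts, since in the paper's convention \eqref{xlesssim}, $|x|\lesssim\gamma t$ means $|x|\le\gamma t$.

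\textbf{The integration by parts for $E_S$.} It needs $\nabla_p\cdot K^\alpha=0$, which holds because $v_\alpha=\nabla_p\sqrt{m_\alpha^2+|p|^2}$ is curl-free. The paper invokes this explicitly, and you should state it. The integration by parts actually produces $-(K^\alpha\cdot\nabla_p)\frac{\omega+v_\alpha}{1+v_\alpha\cdot\omega}$, i.e. the $p$-derivative matrix of the vector kernel applied to $K^\alpha$. So \eqref{kappaS} has to be read in that sense rather than as a scalar divergence times $K^\alpha$. This is harmless later, where only smoothness and boundedness of this kernel for $|p|\le\beta$ are used.
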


\begin{proof}
Throughout, we will focus on the electric field decomposition and note that an analogous representation follows for the magnetic field via the same argument.
The main idea generally involves changing variables to the reference frame along free transport.
We begin by recalling the Glassey-Strauss decomposition of the electric field \cite{GS2}.
For any $t \geq 0$ and $x \in \bfR^3$, we may write
	\begin{equation}
	\label{eq:glassey-strauss-decomp}
	E(t,x) = E_T(t,x) + E_S(t,x) + E_\mathrm{data}(t,x)
	\end{equation}
where
\begin{equation}
\label{ET_GS}
E_T(t,x) = - \sum_{\alpha = 1}^N e_\alpha \int_{|y-x|\leq t} \int \frac{(\omega + v_\alpha(p))(1-|v_\alpha(p)|^2)}{(1+v_\alpha(p)\cdot \omega)^2}f^\alpha(t-|y-x|,y,p) \, dp\,\frac{dy}{|y-x|^2},
\end{equation}
\begin{equation}
\label{ES_GS}
	E_S(t,x)
	= \sum_{\alpha = 1}^N e_\alpha^2
	\int_{|y-x|\leq t}\int\frac{\omega+v_\alpha(p) }{1+v_\alpha(p)\cdot\omega}(E+v_\alpha\times B)(t-|y-x|,y)\cdot\nabla_pf^\alpha(t-|y-x|,y,p)dp\frac{dy}{|y-x|}
\end{equation}
and
\begin{align*}
 E_\mathrm{data}(t,x) & = \frac{1}{4\pi t^2}\int_{|y-x|=t} \biggl (E_0(y)+(y-x)\cdot\nabla E_0(y)+t\nabla\times B_0(y)\biggr )\,dS_y\\
 & \quad -\sum_{\alpha = 1}^N \frac{e_\alpha}{t} \int_{|y-x|=t}\int \left (v_\alpha(p)  f^\alpha_0(y,p)
+ \frac{\omega-(v_\alpha(p)\cdot\omega)v_\alpha(p) }{1+v_\alpha(p)\cdot\omega}f^\alpha_0(y,p) \right )\,dp\,dS_y
 \end{align*}
 where $\omega=\frac{y-x}{|y-x|}$.
Beginning from this representation, we proceed as follows.\\ 

\textbf{Step 1:}  Vanishing of $E_\mathrm{data}$\\

We first claim that the data term vanishes for sufficiently large time because $|x| \lesssim \gamma t$.
Indeed, each of the terms within the above formula involves an integral over the boundary of the light cone $\{ y: |x - y| = t\}$, and we will show that the corresponding integrands vanish on this set for $t$ large.
As we consider only $|x| \lesssim \gamma t$,  we have
$$ |y| \geq |y-x| - |x| \gtrsim  t - \gamma t = (1- \gamma) t > L$$
on the spatial set of integration by taking $t$ sufficiently large, and in particular, $t > \frac{L}{1-\gamma}$.
However, due to the assumptions on the initial data, $E_0(y)$, $B_0(y)$, and $f^\alpha_0(y,p)$ all vanish for $|y| > L$.
Therefore, the region of integration lies strictly outside of the support of any of the integrands in the above expression, and it follows that 
$E_\mathrm{data}(t,x) = 0$
for $|x| \lesssim \gamma t$.\\

\textbf{Step 2:} Representation of $E_T(t,x)$\\

Next, we recall the representation \eqref{ET_GS}.
Transforming the spatial variable $y$ to a velocity variable $u$ via $u=\frac{y-x}{t}$ and defining the prefactor
$$\kappa^E_T(\omega,q)=\frac{(\omega + q)(1-|q|^2)}{(1+q\cdot\omega)^2},$$
for any $|q| < 1$,
the integral becomes
$$    E_T(t,x)=-t \sum_{\alpha=1}^N e_\alpha  \int_{|u|\leq 1} \int \kappa^E_T{\left(\hat{u},v_\alpha(p)\right)}f^\alpha (t(1-|u|),x+ut,p) \, dp \, \frac{du}{|u|^2}.$$
%
With this, we change the reference frame via
$$f^\alpha(t,x,p)=g^\alpha(t,x-v_\alpha(p)t,p)$$
to find
\begin{align*}
    E_T(t,x)=-t \sum_{\alpha=1}^N e_\alpha\int_{|u|\leq 1} \int \kappa^E_T{\left(\hat{u},v_\alpha(p)\right)} g^\alpha{\Big(t(1-|u|),x+ut-v_\alpha(p)t(1-|u|),p\Big)} \, dp \, \frac{du}{|u|^2}.
\end{align*}
Transforming the momentum variable $p$ to a spatial variable $z$ via 
$$z=x+ut-v_\alpha(p)t(1-|u|)$$
yields
\[v_\alpha(p)=\frac{x+ut-z}{t(1-|u|)} \quad \Rightarrow \quad p=v_\alpha^{-1}\rndP{\frac{x+ut-z}{t(1-|u|)}}. \]
Therefore, upon noting the compact spatial support of $g^\alpha$, we arrive at the representation
\begin{equation}
\label{ETa_rep}
    E_T(t,x)=\frac{1}{t^2}\int_{|u|\leq 1} \int_{|z| \leq \mcR} \mcH^E_T \left (t,u,z, \frac{x+ut-z}{t(1-|u|)} \right ) \, dz \, \frac{du}{|u|^2(1-|u|)^3}
\end{equation}
where
\begin{align}
\label{HET}
\mcH^E_T(t,u,z,q) & =  \sum_{\alpha=1}^N e_\alpha \kappa^E_T{\left(\hat{u},q\right)}\mcD_\alpha \rndP{v_\alpha^{-1}(q)}  g^\alpha{\big(t(1-|u|),z,v_\alpha^{-1}(q)\big)}\\
& =  \kappa^E_T{\left(\hat{u},q\right)} \mcH_\rho(t(1-|u|), z, q) \nonumber
\end{align}
which nearly provides the stated representation of $E_T(t,x)$.\\

\textbf{Step 3:} Representation of $E_S(t,x)$\\

We recall the representation \eqref{ES_GS}.
Using the fact that the Lorentz force is divergence-free in momentum, we integrate by parts in $p$, obtaining
$$E_S(t,x) = - \sum_{\alpha=1}^N e_\alpha^2 \int_{|y-x|\leq t}\int \nabla_p\cdot\left( \frac{\omega+v_\alpha(p)}{1+v_\alpha(p) \cdot\omega}\right)K^\alpha(t-|y-x|,y,p) f^\alpha(t-|y-x|,y,p)dp\frac{dy}{|y-x|}.$$
To simplify notation, we define
$$\kappa^E_S(\omega,p)=\nabla_p\cdot \left ( \frac{\omega+v_\alpha(p)}{1+v_\alpha(p)\cdot\omega} \right ).$$
Then, we make the change of variables $u=\frac{y-x}{t}$
so that $\kappa^E_S(\omega,p)=\kappa^E_S\left(\hat{u},p\right)$ and
$$	E_S(t,x)
	=
	- t^2 \sum_{\alpha=1}^N e_\alpha^2 \int_{|u|\leq 1}\int \kappa^E_S\left(\hat{u},p\right)K^\alpha(t-t|u|,x+ut,p)f^\alpha(t(1-|u|),x+ut,p)dp \frac{du}{|u|},
$$
where $ \hat{u}=\frac{u}{|u|}$.
Proceeding as for $E_T$ by changing the reference frame using
$$f^\alpha(t,x,p)=g^\alpha(t,x-v_\alpha(p)t,p)$$
and changing variables via
$$z=x+ut-v_\alpha(p)t(1-|u|)$$
yields
\begin{equation}
\label{ESa_rep}
E_S(t,x)
	=
	\frac{1}{t}\int_{|u|\leq 1}\int_{|z| \leq \mcR}\mcH^E_S\left(t,u,z,\frac{x+ut-z}{t(1-|u|)}\right)dz\frac{du}{{|u|(1-|u|)}^3}
\end{equation}
where
$$ \mcH^E_S(t,u,z,q)= \sum_{\alpha=1}^N e_\alpha^2 \kappa^E_S{\left(\hat{u},v_\alpha^{-1}(q)\right)} \mcD_\alpha \rndP{v_\alpha^{-1}(q)} K^\alpha(t(1-|u|),z+qt(1-|u|),v_\alpha^{-1}(q))g^\alpha{\big(t(1-|u|),z,v_\alpha^{-1}(q)\big)}, $$
which nearly provides the stated representation of $E_S(t,x)$.\\

\textbf{Step 4:} Removal of Singularity\\

With the field representation close to complete, it only remains to remove the singularity within the $u$ integration appearing in both $E_T$ and $E_S$. 
To this end, we note that this region of integration can be dramatically simplified for $t$ sufficiently large due to the support of $g^\alpha$.
Because $g^\alpha$ has compact spatial and momentum support, we note that
$g^\alpha(t,z,p) = 0$ for $|z| \geq \mcR$ or $|p| \geq \beta.$
Therefore, the integrals within \eqref{ETa_rep} and \eqref{ESa_rep} are constrained by $|z| \leq \mcR$ and
$$ \left |v_\alpha^{-1}\rndP{\frac{x+ut-z}{t(1-|u|)}} \right | \leq \beta,$$
respectively.
Notice that $|p| \leq \beta$ implies $|v_\alpha(p)| \leq \gamma$. 
Thus, letting $p = v_\alpha^{-1}\rndP{\frac{x+ut-z}{t(1-|u|)}}$, we find
$$ \left |\frac{x+ut-z}{t(1-|u|)} \right | \leq \gamma.$$
or
$$|x+ut-z| \leq \gamma t(1-|u|)$$
as $|u| \leq 1$.
Because
$$|u|t - |x| - |z| \leq |x+ut-z|,$$
rearranging the above inequality implies
$$(1+\gamma)|u|t \leq |x| + |z| + \gamma t.$$
Finally, we take $t$ sufficiently large, consider only $|x| \leq \gamma t$, and use the constraint $|z| \leq \mcR$ to find
$$|u| \leq \frac{2\gamma}{1+\gamma} + \frac{\mcR}{t(1+ \gamma)}.$$
Note that $\gamma < 1$ guarantees $\frac{\gamma}{1+\gamma} < \frac{1}{2}$.
Finally, taking $t$ sufficiently large, in particular
\begin{equation}
\label{tcond}
t \geq \frac{2\mcR}{(1+\gamma) \left ( 1 -  \frac{2\gamma}{1+\gamma} \right )},
\end{equation}
gives
$$|u| \leq \frac{1}{2} + \frac{\gamma}{1+\gamma} = \mcU < 1.$$
Hence, the support of $g^\alpha(t)$ is a subset of $\{(z,u): |z| \leq \mcR, |u| \leq \mcU\}$.
Thus, for $t$ sufficiently large and $|x| \leq \gamma t$, the integrals above are equal to 
$$ E_T(t,x)=\frac{1}{t^2}\int_{|u|\leq \mcU} \int_{|z| \leq \mcR} \mcH^E_T \left (t,u,z, \frac{x+ut-z}{t(1-|u|)} \right ) \, dz \, \frac{du}{|u|^2(1-|u|)^3}$$
and
$$ E_S(t,x)=\frac{1}{t^2}\int_{|u|\leq \mcU} \int_{|z| \leq \mcR} \mcH^E_S \left (t,u,z, \frac{x+ut-z}{t(1-|u|)} \right ) \, dz \, \frac{du}{|u|(1-|u|)^3}.$$
This removes any singularity at $|u| = 1$ within these expressions.
As $\mcH^E_T(t,u,z,q)$ and $\mcH^E_S(t,u,z,q)$ vanish for $|v_\alpha^{-1}(q)| \geq \beta$, due to the compact momentum support of $g^\alpha$, the functions $\kappa^E_T$, $\kappa^E_S$, and $D_\alpha$ remain $C^\infty$ and uniformly bounded on the support of $g^\alpha(t)$.
\end{proof}

\begin{remark}
We note that the representation herein is significantly more useful, in terms of computing and estimating field derivatives, than the original Glassey-Strauss field representation \cite{GS2}. In particular, the spatial derivatives of $E$ and $B$ now clearly depend upon momentum derivatives of $g^\alpha$ and gain an extra $t^{-1}$ decay factor with each derivative.
Additionally, unlike the decomposition in \cite{GS2}, there is no need to estimate additional projections of the $T$ and $S$ operators within representations of derivatives, e.g., no $E_{TT}$, $E_{TS}$, $E_{ST}$, or $E_{SS}$ terms are necessary to estimate $\Vert \nabla^k_x E(t) \Vert_\infty$ or $\Vert \nabla^k_x B(t) \Vert_\infty$ for $k \in \mathbb{N}$.
\end{remark}


\begin{lemma}
\label{Kconv}
For any $k \in \N_0$ and $\ell \in \mathbb{N}$, we have 
\begin{align*}
	\sup_{x \in \bfR^3} \left | t^{k+\ell+2} \nabla^k_xE(t,x) - \nabla^k_qE_{\ell,\infty}\left(\frac{x}{t} \right) \right |
	\lesssim \sum_{m=0}^{\ell-1} t^{\ell-m} \Vert \nabla^k_qW^E_m (t) \Vert_\infty 
+  \frac{1}{t^{1+k}} \left (\sum_{j=0}^k t^j \sup_{|u| \leq \mcU} \mcK_j (t (1-|u|)) \right ) \mcG_p^k (t)\\
	 + \sum_{j=0}^{k+\ell} \max_{\substack{\alpha = 1,..., N \\ |\delta| = \ell}} \sup_{|u| \leq \mcU} \left \Vert \nabla^j_q \biggl (F^{\alpha,\delta} (t(1-|u|)) - F_\infty^{\alpha,\delta} \biggr )\right \Vert_\infty
	+ t^{-1} \mcG_p^{k+\ell+1}(t).
\end{align*}
In the case $\ell = 0$, this reduces to
\begin{align*}
\sup_{x \in \bfR^3} \left | t^{k+2} \nabla^k_xE(t,x) - \nabla^k_qE_{0,\infty}\left(\frac{x}{t} \right) \right | 
& \lesssim \frac{1}{t^{1+k}} \left (\sum_{j=0}^k t^j \sup_{|u| \leq \mcU} \mcK_j (t (1-|u|)) \right ) \mcG_p^k (t)\\
& \quad  +  \sum_{j=0}^k \max_{\alpha = 1,..., N} \sup_{|u| \leq \mcU} \left \Vert \nabla^j_q \biggl (F^{\alpha, 0} (t(1-|u|)) - F_\infty^{\alpha, 0} \biggr )\right \Vert_\infty + t^{-1} \mcG_p^{k+1}(t)
\end{align*}
for any $k \in \N_0$.
The convergence estimates for $B(t,x)$ are identical.
\end{lemma}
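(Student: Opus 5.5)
Our plan is to start from the refined field representation of Lemma \ref{Krep} and split $E = E_T + E_S$ for $|x| \lesssim \gamma t$, with $t$ large enough that \eqref{tcond} holds. Since the quantities are being measured along $x/t$, we take $k$ spatial derivatives directly under the integrals in \eqref{ETrep} and \eqref{ESrep}. In both representations $x$ enters only through the last argument $q = \frac{x+ut-z}{t(1-|u|)}$. Each $\partial_{x_i}$ therefore becomes $\frac{1}{t(1-|u|)}\partial_{q_i}$, and $(1-|u|)^{-1} \le (1-\mcU)^{-1}$ is harmless. This gives
\[
t^{k+2}\nabla^k_x E_T(t,x) = \int_{|u|\le\mcU}\int_{|z|\le\mcR} \frac{\nabla^k_q \mcH^E_T\bigl(t,u,z,\tfrac{x+ut-z}{t(1-|u|)}\bigr)}{(1-|u|)^k}\,dz\,\frac{du}{|u|^2(1-|u|)^3},
\]
and an analogous expression for $E_S$ carrying an extra factor $t^{-1}$ in front.

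\emph{The $E_S$ term.} The $q$-derivatives fall on three factors.
\begin{itemize}
\item On $\kappa^E_S$ and $\mcD_\alpha\circ v_\alpha^{-1}$ they are bounded on the support.
\item On $g^\alpha(\cdot,z,v_\alpha^{-1}(q))$ they produce at most $\mcG^k_p$.
\item On $K^\alpha(t(1-|u|), z+qt(1-|u|),\cdot)$, $j$ derivatives produce $t^j\mcK_j(t(1-|u|))$ after using the chain rule. Here we use that the evaluation point lies on a transported characteristic in the support, so the definition of $\mcK_j$ applies.
\end{itemize}
The singularity $|u|^{-1}$ is integrable, so $t^{k+2}\nabla^k_x E_S$ is bounded by $t^{-1-k}\bigl(\sum_{j\le k} t^j \sup_{|u|\le\mcU}\mcK_j(t(1-|u|))\bigr)\mcG^k_p(t)$. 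This is exactly the second error term.

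\emph{The $E_T$ term.} We Taylor expand $\nabla^k_q\mcH^E_T$ in the $z$-slot around $q_0=\frac{x+ut}{t(1-|u|)}$ up to order $\ell$, exactly as in the introduction. The order-$m$ term equals $t^{-m}\,\nabla^k_q W^E_m\bigl(t,u,\tfrac{x+ut}{t(1-|u|)}\bigr)$: integrating $(-z)^\delta g^\alpha$ in $z$ produces $F^{\alpha,\delta}$ and hence $\mcA_\delta$ in \eqref{eq:W-l}. The Taylor remainder involves $\ell+1+k$ $q$-derivatives of $\mcH_\rho$, giving $t^{-\ell-1}\mcG_p^{k+\ell+1}$. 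After multiplying by $t^\ell$, this becomes the term $t^{-1}\mcG_p^{k+\ell+1}(t)$. Consequently
\[
t^{k+\ell+2}\nabla^k_xE_T = \sum_{m=0}^{\ell} t^{\ell-m}\int \nabla^k_qW^E_m\,\frac{du}{|u|^2(1-|u|)^3} + O\bigl(t^{-1}\mcG_p^{k+\ell+1}\bigr).
\]
For $m<\ell$ we bound crudely by $t^{\ell-m}\|\nabla^k_qW^E_m(t)\|_\infty$, using integrability of $|u|^{-2}$ in three dimensions. For $m=\ell$ we compare $W^E_\ell(t,\cdot)$ with $W^E_{\ell,\infty}$. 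The difference is a sum of at most $k+\ell$ $q$-derivatives of $\kappa_T^E\,\mcD_\alpha\circ v_\alpha^{-1}$ (bounded) times derivatives of $F^{\alpha,\delta}(t(1-|u|))-F^{\alpha,\delta}_\infty$ composed with $v_\alpha^{-1}$. The chain rule with bounded derivatives of $v_\alpha^{-1}$ on $|q|\le\gamma$ gives the third error term. Evaluating at $q_0 = \frac{x/t+u}{1-|u|}$, the integral of $\nabla^k_qW^E_{\ell,\infty}$ is exactly $\nabla^k_q E_{\ell,\infty}(x/t)$ by \eqref{eq:E-l-inf}, after moving $\nabla_q$ through the $u$-integral. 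This step needs the scaling factor $(1-|u|)^{-k}$ to be absorbed consistently; we would track it and, if necessary, fold it into the definition of the derivative of $E_{\ell,\infty}$, since $\partial_q$ of $W(u,\frac{q+u}{1-|u|})$ produces exactly $(1-|u|)^{-1}$ per derivative.

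\emph{The region $|x|\gtrsim\gamma t$.} The main obstacle is the supremum over all $x\in\bfR^3$, since the representation holds only for $|x|\lesssim\gamma t$. The plan for the outer region is as follows.
\begin{itemize}
\item On the limit side, $E_{\ell,\infty}(q)$ is supported where $\frac{q+u}{1-|u|}$ lies in the velocity support (modulus at most $\gamma$), so the limit terms vanish or are handled by the support condition.
\item On the solution side, we use the same support argument as Step 4 of Lemma \ref{Krep}, applied directly to the Glassey–Strauss integrals. For $|x|$ well outside $\gamma t$ but inside the light cone, the integrands still vanish unless $\frac{x+ut-z}{t(1-|u|)}$ lies in the velocity support. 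The data term vanishes for $|x|<t-L$, and the fields vanish for $|x|>t+L$.
\item The thin shell $t-L\le|x|\le t+L$ is controlled by Theorem \ref{T0} and Lemma \ref{lem:glassey-strauss}; we expect this shell to need the most care.
\end{itemize}
Finally, $B$ is treated identically with $\kappa^B_T$ and $\kappa^B_S$. The case $\ell=0$ is the same computation with no Taylor terms below order $0$.
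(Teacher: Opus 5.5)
Inside $|x|\lesssim\gamma t$ your argument is the paper's: Lemma~\ref{Krep}, the substitution $\nabla_x=(t(1-|u|))^{-1}\nabla_q$ under the integrals, the bound $t^j\mcK_j(t(1-|u|))$ for $q$-derivatives of the Lorentz force in $\mcH^E_S$, the Taylor expansion in $z$ producing $t^{-m}\nabla^k_qW^E_m$ with remainder $t^{-1}\mcG_p^{k+\ell+1}(t)$, and the comparison of $W^E_\ell$ with $W^E_{\ell,\infty}$. Your worry about $(1-|u|)^{-k}$ is unfounded: differentiating \eqref{eq:E-l-inf} $k$ times gives exactly the weight $|u|^{-2}(1-|u|)^{-3-k}$ that appears in $t^{k+2}\nabla^k_xE_T$.

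\emph{First gap: the power of $t$ in the $E_S$ step.} The prefactor in \eqref{ESrep} is $t^{-1}$, against $t^{-2}$ in \eqref{ETrep}, so relative to $E_T$ there is an extra factor $t^{+1}$, not $t^{-1}$. The quantity $t^{-1-k}\bigl(\sum_{j\le k}t^j\sup_{|u|\le\mcU}\mcK_j(t(1-|u|))\bigr)\mcG^k_p(t)$ bounds $\nabla^k_xE_S$ itself; this is \eqref{ES_estimate}. Its contribution to $t^{k+\ell+2}\nabla^k_xE$ is therefore $t^{\ell+1}\bigl(\sum_{j\le k}t^j\sup_{|u|\le\mcU}\mcK_j(t(1-|u|))\bigr)\mcG^k_p(t)$. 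So your claim that this is ``exactly the second error term'' does not hold, and for $\ell\ge1$ you also dropped the extra $t^\ell$. The paper's proof makes the identical slip: it adds \eqref{ES_estimate} to $I+II+III$ without the factor $t^{k+\ell+2}$. The statement should therefore carry $t^{\ell+1}$ in place of $t^{-1-k}$. The corrected term is still harmless where the lemma is applied. When $\ell\ge1$ one has $\mcK_j\lesssim t^{-n-3}$ for $j\le k$ and $k+\ell\le n+1$, so the term is $\lesssim t^{-1}$. When $\ell=0$ one loses at most a logarithm, which one further iteration removes.

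\emph{Second gap: the region $|x|\gtrsim\gamma t$.} This part of your plan cannot be completed, and the paper does not attempt it. Its proof only uses Lemma~\ref{Krep}, hence only $|x|\lesssim\gamma t$. The supremum in the statement must be read over that region, as in Theorem~\ref{oldT1}(c), and this is all that is used later, since $\mcK_j$ only samples the field at interior points $z+v_\alpha(p)s$ with $|p|\le\beta$.
\begin{itemize}
\item \emph{The shell $t-L\le|x|\le t+L$.} Near the light cone the estimate is false in general. The field there carries radiation, for example the free part $E_{\mathrm{data}}$ generated by $E_0$, $B_0$, $f_0$, which is generically of size $t^{-1}$, the rate of Theorem~\ref{T0}. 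Meanwhile $E_{\ell,\infty}(x/t)=0$ once $|x|/t$ is close to $1$. So $t^{k+\ell+2}\nabla^k_xE$ is unbounded there, and Theorem~\ref{T0} and Lemma~\ref{lem:glassey-strauss} give only $O(t^{-1}\ln t)$.
\item \emph{The intermediate region $\gamma t\lesssim|x|<t-L$.} The support argument of Step~4 of Lemma~\ref{Krep} no longer confines $u$ to $|u|\le\mcU$. The surviving contributions come from $|u|$ near $1$, that is, from sources at early times $t(1-|u|)$, where no asymptotic information on $\mcK_j$ or on $F^{\alpha,\delta}$ is available. They also come with unbounded weights $(1-|u|)^{-3-k}$.
\end{itemize}
You should restrict the lemma to $|x|\lesssim\gamma t$ and delete this part of the argument.
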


\begin{proof}
We begin by invoking the field representation of $E(t,x)$, along with an analogous decomposition for $B(t,x)$, provided by Lemma \ref{Krep}, namely
$$E(t,x) = E_T(t,x) + E_S(t,x)$$
where 
$$E_T(t,x) = \frac{1}{t^2}\int_{|u|\leq \mcU} \int_{|z|\leq \mcR} \mcH^E_T \left (t,u,z, \frac{x+ut-z}{t(1-|u|)} \right ) \, dz \, \frac{du}{|u|^2(1-|u|)^3}$$
with
$$ \mcH^E_T(t,u,z,q)= \kappa^E_T{\left(\hat{u},q\right)} \ \mcH_\rho\rndP{t(1-|u|), z, q},$$
where $\kappa^E_T$ and $\mcH_\rho$ are given by \eqref{kappaT} and \eqref{Hrho},
and
$$E_S(t,x) = \frac{1}{t}\int_{|u|\leq \mcU}\int_{|z|\leq \mcR} \mcH^E_S\left(t,u,z,\frac{x+ut-z}{t(1-|u|)}\right)dz\frac{du}{{|u|(1-|u|)}^3}$$
with
$$\mcH^E_S(t,u,z,q) = \sum_{\alpha = 1}^N e_\alpha^2 \kappa^E_S(\hat{u}, v_\alpha^{-1}(q)) \mcD_\alpha \rndP{v_\alpha^{-1}(q)} K^\alpha(t(1-|u|),z+qt(1-|u|), v_\alpha^{-1}(q))g^\alpha{\big(t(1-|u|),z,v_\alpha^{-1}(q)\big)}$$
where $\kappa^E_S$ is given by \eqref{kappaS}.
Here,  $K^\alpha(t,x,p) = E(t,x)+ v_\alpha(p) \times B(t,x)$.
Throughout the proof we will use the fact that $\kappa^E_S$, $\kappa^E_T$ and $\mcD_\alpha$ are infinitely smooth with derivatives that are uniformly bounded on the support of $g^\alpha$.

Taking spatial derivatives of order $k \in \mathbb{N}_0$ within this representation, we find
$$D_x^kE(t,x) = D_x^kE_T(t,x) + D_x^kE_S(t,x)$$
where
\begin{equation}
\label{DxKET}
D_x^kE_T(t,x)  = \frac{1}{t^{2+k}}\int_{|u|\leq \mcU} \int_{|z| \leq \mcR} D^k_q\mcH^E_T \left (t,u,z, \frac{x+ut-z}{t(1-|u|)} \right ) \, dz \, \frac{du}{|u|^2(1-|u|)^{3+k}},
\end{equation}
and due to the compact support of $g^\alpha$,
$$\left |D_x^kE_S(t,x) \right | \lesssim \frac{1}{t^{1+k}}\int_{|u|\leq \mcU} \sup_{\substack{|z| \leq \mcR \\|q| \leq \gamma}} \left |D^k_q\mcH^E_S(t,u,z, q) \right | \, \frac{du}{|u|(1-|u|)^{3+k}}.$$

\textbf{Step 1:} Estimate of $D_x^kE_S(t,x)$\\

First, we estimate the $D_x^kE_S$ term, which should ultimately decay faster than the convergence rate of the corresponding $D_x^kE_T$ term.
In particular, within $D_x^kE_S$, we wish to bound derivatives of $\mcH_S^E(t,u,z,q)$.
We first note that the arguments of the field terms remain inside the set $|x| \lesssim \gamma t$.
Considering the field term $D_x^jK^\alpha(t(1-|u|),z+qt(1-|u|),v_\alpha^{-1}(q))$ for $j \in \mathbb{N}_0$ we claim that the magnitude of the spatial argument is bounded by $\gamma$ multiplied by its time argument, meaning
$$\left | z+qt(1-|u|) \right | \leq \gamma t(1-|u|),$$
for $t$ sufficiently large. As we restrict $|u|$ away from 1 for sufficiently large time via \eqref{tcond}, this is equivalently written
$$\left | \frac{z}{t(1-|u|)} + q \right | \leq \gamma .$$
Beginning with the uniform bound on the spatial support of $g^\alpha$ and $|u| \leq \mcU < 1$, we find
$$\left | \frac{z}{t(1-|u|)}\right | \leq t^{-1} \frac{\mcR}{1-\mcU}.$$
The bound on the momentum support of $g^\alpha$, namely $|v_\alpha^{-1}(q)| \leq \beta$, further yields
$|q| \leq \frac{\beta}{\sqrt{1+ \beta^2}}$.
Hence, assuming \eqref{tcond} and further taking times satisfying 
$$t \geq \frac{\mcR}{1-\mcU} \left ( \gamma - \frac{\beta}{\sqrt{1+ \beta^2}}\right )^{-1}$$
gives
$$\begin{aligned}
\left | \frac{z}{t(1-|u|)} + q \right | & \leq \gamma.
\end{aligned}$$
Therefore, upon rearranging we have
$$\left | z+qt(1-|u|) \right | \lesssim \gamma t(1-|u|).$$
Applying this to the fields allows us to estimate for each $j\in\N_0$ and $\alpha=1,\dots,N$,
\begin{align*}
\biggl |D_x^jK^\alpha(t(1-|u|),z+qt(1-|u|),v_\alpha^{-1}(q)) \biggr | &\lesssim \sup_{|y| \lesssim \gamma t(1-|u|)} \biggl ( |D_x^jE(t(1-|u|),y)| + |D_x^jB(t(1-|u|),y)| \biggr )\\
& \lesssim  \mcK_j \left ( t (1 - |u|) \right ). 
\end{align*}
The analogous argument holds for $p$ derivatives of the force, as well, so that for $\alpha=1,\dots,N$ and $j\in\N$,
$$\biggl |D_p^jK^\alpha(t(1-|u|),z+qt(1-|u|),v_\alpha^{-1}(q)) \biggr | \lesssim \sup_{|y| \lesssim \gamma t(1-|u|)}  |B(t(1-|u|),y)| \lesssim  \mcK_0 \left ( t (1 - |u|) \right )$$
and for $j=0$
$$\biggl |K^\alpha(t(1-|u|),z+qt(1-|u|),v_\alpha^{-1}(q)) \biggr |  \lesssim  \mcK_0 \left ( t (1 - |u|) \right ).$$
Thus, taking derivatives of $\mcH^E_S$ and using the increasing nature of $\mcG_p^j(t)$ in $j$, we find
$$\left|D_x^kE_S(t,x)\right| \lesssim
	\frac{1}{t^{1+k}}\int_{|u|\leq \mcU} \left (\sum_{j=0}^k [t(1-|u|)]^j \mcK_j (t(1-|u|)) \right ) \mcG_p^k (t(1-|u|)) \frac{du}{{|u|(1-|u|)}^{3+k}}
$$
and, as $\mcG_p^k(t)$ is increasing in $t$,  this yields
	\begin{equation}
	\label{ES_estimate}
\left|D_x^kE_S(t,x)\right|
\lesssim \frac{1}{t^{1+k}} \left (\sum_{j=0}^k t^j \sup_{|u| \leq \mcU} \mcK_j (t (1-|u|)) \right ) \mcG_p^k (t) .
	\end{equation}

\textbf{Step 2:} Field Limit and Convergence of $E_T(t,x)$\\

Next, we return to \eqref{DxKET} and consider the difference between the tangential component and the limiting electric field.
First, the kernel of the electric field can be expanded within its last argument so that
\begin{align*}
\mcH^E_T \left (t,u,z, \frac{x+ut-z}{t(1-|u|)} \right ) &= 
	\sum_{m=0}^\ell\sum_{|\delta|=m}\frac{(-z)^\delta}{(t(1-|u|))^m\delta!}D^\delta_q\mcH^E_T\left(t,u,z, \frac{x+ut}{t(1-|u|)} \right)\\
	& \qquad +
	\sum_{|\delta|=\ell+1}\frac{(-z)^\delta}{(t(1-|u|))^{\ell+1}\delta!}D^\delta_q\mcH^E_T\left(t,u,z,\frac{x+ut - \theta z}{t(1-|u|)}  \right )
\end{align*}
for some $\theta \in[0,1]$.
Moreover, by \eqref{eq:F-al}, \eqref{eq:A-l}, \eqref{eq:W-l}, and \eqref{HT} the $z$ integral of the first term on the right side can be rewritten in terms of the function $W_m^E(t,u,q)$ 
so that
\begin{align*}
\sum_{|\delta|=m}\frac{1}{(1-|u|)^m\delta!} & \int_{|z| \leq \mcR} (-z)^\delta D^\delta_q\mcH_T^E \left (t,u,z, q \right)  \ dz\\
&  =  \sum_{|\delta|=m}\frac{1}{(1-|u|)^m\delta!} D^\delta_q \left (\sum_{\alpha = 1}^N e_\alpha \kappa^E_T(\hat{u},q) \mcD_\alpha\rndP{v_\alpha^{-1}(q)} \int (-z)^\delta g^\alpha{\big(t(1-|u|),z,v_\alpha^{-1}(q)\big)} \ dz \right )\\
& = \sum_{|\delta|=m} \frac{1}{(1-|u|)^m\delta!}  D^\delta_q \biggl (\kappa^E_T(\hat{u},q) \mcA_\delta\big(t(1-|u|), q\big) \biggr  )\\
& = W_m^E(t,u,q).
\end{align*}
Upon taking derivatives to match the expression within \eqref{DxKET}, we insert this in the above expansion to find
\begin{align*}
\int_{|z| \leq \mcR} D^k_q\mcH^E_T \left (t,u,z, \frac{x+ut-z}{t(1-|u|)} \right ) \ dz &= 
	\sum_{m=0}^\ell t^{-m} D^k_qW_m^E\left(t,u, \frac{x+ut}{t(1-|u|)} \right)\\
	& \quad +
	\int_{|z| \leq \mcR} \sum_{|\delta|=\ell+1}\frac{(-z)^\delta}{(t(1-|u|))^{\ell+1}\delta!}D^k_qD^\delta_q\mcH^E_T\left(t,u,z,\frac{x+ut - \theta z}{t(1-|u|)}  \right ) \ dz.
\end{align*}
Further, we note that from \eqref{eq:E-l}, we have
$$\int_{|u|\leq \mcU} D_q^kW^E_m\rndP{t, u, \frac{x + ut}{t(1 - |u|)}} \, \frac{du}{|u|^2\rndP{1-|u|}^{3+k}} = t^k D_x^kE_m(t,x).$$
Hence, inserting the above expansion within \eqref{DxKET}, subtracting the limiting field, and estimating the remaining error term yields
\begin{align*}
     \left|t^{2+k+\ell}D_x^kE_T(t,x)- D_q^kE_{\ell,\infty}\rndP{\frac{x}{t}}\right|
     &\lesssim \sum_{m=0}^{\ell-1} t^{k+\ell-m} \left | D^k_xE_m(t,x) \right |\\
     &\quad + \left | t^kD^k_xE_\ell(t,x) - D_q^kE_{\ell,\infty}\rndP{\frac{x}{t}}\right|\\
& \quad +
	t^{-1}  \int_{|u|\leq \mcU} \sup_{\substack{|z| \leq \mcR, \\|q| \leq \gamma}} \left |\nabla^{k+\ell+1}_q\mcH^E_T(t,u,z, q) \right | \, \frac{du}{|u|^2(1-|u|)^{4+k+\ell}}\\
    &=I + II + III.
\end{align*}

To estimate $I$, we immediately find from \eqref{eq:E-l}
$$ I \leq \sum_{m=0}^{\ell-1} t^{\ell-m} \int_{|u| \leq \mcU} \left | D_q^k W_m^E \left (t, u, \frac{x + ut}{t(1-|u|)} \right ) \right | \frac{du}{|u|^2(1-|u|)^{3+k}}
\lesssim \sum_{m=0}^{\ell-1} t^{\ell-m} \Vert \nabla^k_qW^E_m (t) \Vert_\infty.$$
Similarly, using \eqref{HET} and the fact that $\mcG_p^k(t)$ is increasing in $t$, estimating $III$ yields
$$III \lesssim t^{-1} \mcG_p^{k+\ell+1}(t).$$

Finally, to estimate $II$, we use the structure of the approximating and limiting fields in \eqref{eq:E-l} and \eqref{eq:E-l-inf}, respectively, as well as \eqref{eq:A-l}, \eqref{eq:W-l}, \eqref{eq:A-l-inf}, and \eqref{eq:W-l-inf} to estimate the difference of the corresponding kernels, which yields 
\begin{align*}
II & \leq \int_{|u|\leq \mcU}  \left | D^k_qW^E_\ell\rndP{t, u, \frac{x + ut}{t(1 - |u|)}} - D^k_q W^E_{\ell,\infty}\rndP{u, \frac{x + ut}{t(1 - |u|)}}  \right | \, \frac{du}{|u|^2\rndP{1-|u|}^{3+k}}\\
& \leq \int_{|u|\leq \mcU}  \left | D^k_q  \sum_{|\delta| = \ell} \frac{1}{\delta!} D_q^\delta \left [\kappa_T^E(\hat{u},q) \biggl ( \mcA_{\delta} (t(1-|u|),q) -  \mcA_{\delta,\infty} (q) \biggr ) \right ]  \biggr |_{q = \frac{\frac{x}{t} + u}{1 - |u|}}  \right | \, \frac{du}{|u|^2\rndP{1-|u|}^{3+k+\ell}}\\
& \leq \sum_{j=0}^{k+\ell} \max_{\substack{\alpha = 1,..., N \\ |\delta| = \ell}} \sup_{|u| \leq \mcU} \left \Vert \nabla^j_q \biggl (F^{\alpha,\delta} (t(1-|u|)) - F_\infty^{\alpha,\delta} \biggr )\right \Vert_\infty.
\end{align*}
%

Combining the estimates for $I$, $II$, and $III$, and including the estimate \eqref{ES_estimate} for $E_S(t,x)$ completes the proof.
The analogous result holds for $B(t,x)$, rather than $E(t,x)$, using identical methods.
\end{proof}


\begin{lemma}
\label{lem:rhojderiv-l}
	For any $k,\ell \in \N_0$, we have
	\[\sup_{x \in \bfR^3} \left | t^k \nabla^k_x \rho_\ell(t,x) - \nabla^k_p \rho_{\ell,\infty}\left(\frac{x}{t} \right) \right | \lesssim \sum_{j=0}^{k+\ell}\max_{\substack{\alpha=1,\hdots,N \\ |\delta| = \ell}}\left\| \nabla_q^j \biggl (F^{\alpha,\delta}(t)- F_\infty^{\alpha,\delta} \biggr )\right\|_\infty,\]
	\[\sup_{x \in \bfR^3} \left | t^k \nabla^k_x j_\ell(t,x) - \nabla^k_p j_{\ell,\infty}\left(\frac{x}{t} \right) \right | \lesssim \sum_{j=0}^{k+\ell}\max_{\substack{\alpha=1,\hdots,N \\ |\delta| = \ell}}\left\| \nabla_q^j \biggl (F^{\alpha,\delta}(t)- F_\infty^{\alpha,\delta} \biggr )\right\|_\infty,\]
		\[\sup_{\substack{|q| \leq \gamma \\ |u| \leq \mcU}} \left | \nabla^k_q \biggl (W^E_\ell(t,u,q) - W^E_{\ell,\infty}(u,q) \biggr ) \right | \lesssim \sum_{j=0}^{k+\ell}\max_{\substack{\alpha=1,\hdots,N \\ |\delta| = \ell}} \sup_{|u| \leq \mcU} \left\| \nabla_q^j \biggl (F^{\alpha,\delta}(t(1-|u|))- F_\infty^{\alpha,\delta} \biggr )\right\|_\infty,\]
	and
	\[\sup_{\substack{|q| \leq \gamma \\ |u| \leq \mcU}} \left | \nabla^k_q \biggl (W^B_\ell(t,u,q) - W^B_{\ell,\infty}(u,q) \biggr ) \right | \lesssim \sum_{j=0}^{k+\ell}\max_{\substack{\alpha=1,\hdots,N \\ |\delta| = \ell}} \sup_{|u| \leq \mcU} \left\| \nabla_q^j \biggl (F^{\alpha,\delta}(t(1-|u|))- F_\infty^{\alpha,\delta} \biggr )\right\|_\infty.\]
\end{lemma}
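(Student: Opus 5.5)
The plan is to exploit the self-similar structure of each approximating quantity: $\rho_\ell(t,x)$, $j_\ell(t,x)$, $W^E_\ell$ and $W^B_\ell$ are all explicit linear combinations of $q$-derivatives of $\mcA_\delta(t,\cdot)$ (with $|\delta|=\ell$), multiplied by smooth weights and evaluated at $q=\frac{x}{t}$ or at a fixed $q$. The limits have exactly the same form with $\mcA_\delta$ replaced by $\mcA_{\delta,\infty}$.

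\textbf{Step 1 (densities).} By \eqref{eq:rho-l}, $\rho_\ell(t,x)=R_\ell(t,\frac{x}{t})$, where
$$R_\ell(t,q)=4\pi\sum_{|\delta|=\ell}\frac{1}{\delta!}D^\delta_q\mcA_\delta(t,q).$$
The chain rule then gives $t^k\nabla^k_x\rho_\ell(t,x)=\nabla^k_qR_\ell(t,q)\big|_{q=x/t}$. Comparing with \eqref{eq:rho-l-inf}, the quantity to bound is
$$\sup_q\Bigl|\nabla^k_q\sum_{|\delta|=\ell}\tfrac{1}{\delta!}D^\delta_q\bigl(\mcA_\delta(t,q)-\mcA_{\delta,\infty}(q)\bigr)\Bigr|,$$
which is at most a constant times $\sup_{|\delta|=\ell}\|\nabla^{k+\ell}_q(\mcA_\delta(t)-\mcA_{\delta,\infty})\|_\infty$.

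\textbf{Step 2 (reduction to $F^{\alpha,\delta}$).} Next we expand $\mcA_\delta(t,q)-\mcA_{\delta,\infty}(q)$ using \eqref{eq:A-l} and \eqref{eq:A-l-inf}:
$$\sum_\alpha e_\alpha\,\mcD_\alpha\bigl(v_\alpha^{-1}(q)\bigr)\,\bigl(F^{\alpha,\delta}(t,\cdot)-F^{\alpha,\delta}_\infty\bigr)\bigl(v_\alpha^{-1}(q)\bigr).$$
Applying $\nabla^{k+\ell}_q$ via Leibniz and the Faà di Bruno formula gives a sum of products. Each product has a derivative of $\mcD_\alpha\circ v_\alpha^{-1}$, products of derivatives of $v_\alpha^{-1}$, and $\nabla_p^j(F^{\alpha,\delta}(t)-F^{\alpha,\delta}_\infty)$ evaluated at $v_\alpha^{-1}(q)$, with $0\le j\le k+\ell$. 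All the smooth factors are uniformly bounded by Lemma~\ref{lem:v}, since the momentum support is contained in $|p|\le\beta$. On that set $|v_\alpha^{-1}(q)|\le\beta$, so $|q|\le\gamma<1$, which keeps us away from the singularity of $v_\alpha^{-1}$ at $|q|=1$. Outside this set both $F^{\alpha,\delta}(t)$ and $F^{\alpha,\delta}_\infty$ vanish; for the limit this uses the support of $F^{\alpha,\delta}_\infty$ as a limit of functions supported in $|p|\le\beta$. This yields the first estimate.

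\textbf{Step 3 ($j_\ell$, $W^E_\ell$, $W^B_\ell$).} For $j_\ell$ the argument is identical, with the extra smooth factor $q$ handled by Leibniz. For $W^E_\ell$, \eqref{eq:W-l} and \eqref{eq:W-l-inf} give
$$W^E_\ell(t,u,q)-W^E_{\ell,\infty}(u,q)=(1-|u|)^{-\ell}\sum_{|\delta|=\ell}\tfrac{1}{\delta!}D^\delta_q\Bigl[\kappa^E_T(\hat u,q)\bigl(\mcA_\delta(t(1-|u|),q)-\mcA_{\delta,\infty}(q)\bigr)\Bigr].$$
Here $(1-|u|)^{-\ell}\le(1-\mcU)^{-\ell}$ is bounded. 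For $|q|\le\gamma$, $\kappa^E_T(\hat u,q)$ and all its $q$-derivatives are bounded uniformly in $\hat u\in\mathbb{S}^2$, because $1+q\cdot\hat u\ge1-\gamma>0$. Applying Leibniz and then Step 2 at time $t(1-|u|)$, and taking the supremum over $|u|\le\mcU$, gives the third estimate. The fourth follows the same way, since $\kappa^B_T$ (with $\hat u\times q$) has the same denominator.

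\textbf{Main obstacle.} The only point needing care is the uniformity of the bounds for the smooth factors. These are the derivatives of $v_\alpha^{-1}$ and $\mcD_\alpha$ near $|q|\to1$, and the denominator of $\kappa_T$. The difficulty is resolved by confining everything to the compact momentum support, $|q|\le\gamma<1$.
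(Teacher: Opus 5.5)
Your proposal is correct and follows essentially the same route as the paper. You rewrite each quantity as $q$-derivatives of $\mcA_\delta-\mcA_{\delta,\infty}$ (times smooth weights) evaluated at $q=\frac{x}{t}$, absorb the $t^{-k}$ from the chain rule, and bound the Leibniz/chain-rule expansion using the boundedness of $\mcD_\alpha$, $v_\alpha^{-1}$, $\kappa^E_T$, $\kappa^B_T$, $(1-|u|)^{-1}$ and their derivatives on the compact momentum support; your explicit checks that $|q|\le\gamma<1$ there and that $1+q\cdot\hat{u}\ge 1-\gamma>0$ make precise what the paper's one-paragraph proof leaves implicit.
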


\begin{proof}
	Taking any $k$th order spatial derivative of $\rho_\ell(t,x)$ using the multi-index $|\eta|=k$ in \eqref{eq:rho-l} and substituting \eqref{eq:A-l} gives
	\[D_x^\eta\rho_\ell(t,x) = 4\pi t^{-k} \sum_{|\delta| = \ell}  \frac{1}{\delta!} D^\delta_qD_q^\eta \biggl [ \sum_{\alpha=1}^N e_\alpha \mcD_\alpha (v_\alpha^{-1}(q)  )F^{\alpha,\delta} \left (t, v_\alpha^{-1}(q) \right) \biggr ] \biggr |_{ q=\frac{x}{t} } \]
	Thus, subtracting the proposed limit given by derivatives of \eqref{eq:rho-l-inf}, using the boundedness of $\mcD_\alpha(p)$, $v_\alpha^{-1}(q)$, and their derivatives, and estimating yields
$$
\left| t^k D_x^\eta \rho_\ell(t,x)-D_p^\eta\rho_{\ell,\infty}\left(\frac{x}{t}\right)\right| 
\lesssim \sum_{j=0}^{k+\ell}\max_{\substack{\alpha=1,\hdots,N \\ |\delta| = \ell}}\left\| \nabla_q^j \biggl (F^{\alpha,\delta}(t)- F_\infty^{\alpha,\delta} \biggr )\right\|_\infty.
$$
upon summing over all such derivatives and taking the supremum over all $x$. This provides the stated result for $\rho_\ell(t,x)$, and the proofs for $j_\ell(t,x)$, $W_\ell^E(t,x)$, $W_\ell^B(t,x)$ are analogous upon using the boundedness of $\kappa_T^E(\hat{u}, q)$, $\kappa_T^B(\hat{u}, q)$, and $\sup_{|u| \leq \mcU} (1-|u|)^{-k}$ for any $k \in \N_0$ in the latter, field cases.
\end{proof}


\begin{lemma}
	\label{rhoconv}
	For any $\ell \in \mathbb{N}$, we have
	$$\sup_{x \in \bfR^3} \left | t^{\ell+3} \rho(t,x) - \rho_{\ell,\infty} \left(\frac{x}{t} \right) \right | \lesssim \sum_{m=0}^{\ell-1}  t^{\ell - m}\left \|  \rho_m(t) \right \|_\infty +  \sum_{j=0}^\ell\max_{\substack{\alpha = 1, ..., N \\ |\delta| = \ell}} \left\| \nabla_q^j \biggl (F^{\alpha, \delta}(t)  - F^{\alpha, \delta}_\infty \biggr ) \right\|_\infty + t^{-1} \mcG_p^{\ell +1}(t) $$
	and
	$$\sup_{x \in \bfR^3} \left | t^{\ell+3} j(t,x) - j_{\ell,\infty} \left(\frac{x}{t} \right) \right | \lesssim \sum_{m=0}^{\ell-1}  t^{\ell - m}\left \|  j_m(t) \right \|_\infty +  \sum_{j=0}^\ell\max_{\substack{\alpha = 1, ..., N \\ |\delta| = \ell}} \left\| \nabla_q^j \biggl (F^{\alpha, \delta}(t)  - F^{\alpha, \delta}_\infty \biggr ) \right\|_\infty + t^{-1} \mcG_p^{\ell +1}(t) .$$
\end{lemma}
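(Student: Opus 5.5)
We start from the exact Taylor expansion \eqref{rho_exp}, applied with order $\ell$. Multiplying by $t^{\ell}$ gives
\[
t^{\ell+3}\rho(t,x) = \sum_{m=0}^{\ell} t^{\ell-m}\rho_m(t,x) + 4\pi t^{-1}\sum_{|\delta|=\ell+1}\frac{1}{\delta!}\int (-y)^\delta D^\delta_q\mcH_\rho\Bigl(t,y,\frac{x-\theta y}{t}\Bigr)\,dy .
\]
Subtracting $\rho_{\ell,\infty}(x/t)$ and applying the triangle inequality splits the difference into three pieces.

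\begin{itemize}
\item[(I)] The lower order terms $\sum_{m=0}^{\ell-1} t^{\ell-m}\rho_m(t,x)$. These are bounded directly by $\sum_{m=0}^{\ell-1} t^{\ell-m}\|\rho_m(t)\|_\infty$, which is the first term on the right side of the claim.
\item[(II)] The difference $\rho_\ell(t,x)-\rho_{\ell,\infty}(x/t)$. By the case $k=0$ of Lemma \ref{lem:rhojderiv-l}, this is bounded by $\sum_{j=0}^{\ell}\max_{\alpha,|\delta|=\ell}\|\nabla_q^j(F^{\alpha,\delta}(t)-F^{\alpha,\delta}_\infty)\|_\infty$. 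That is exactly the second term.
\item[(III)] The Taylor remainder.
\end{itemize}

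For the remainder (III), we use the uniform spatial support bound \eqref{Rbound}: $g^\alpha(t,y,\cdot)$ vanishes for $|y|>\mcR$. Hence $|(-y)^\delta|\lesssim 1$ and the $y$-integral runs over a bounded set. It remains to bound $D^\delta_q\mcH_\rho(t,y,q)$ with $|\delta|=\ell+1$ uniformly.

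Recall $\mcH_\rho(t,y,q)=\sum_\alpha e_\alpha \mcD_\alpha(v_\alpha^{-1}(q))\,g^\alpha(t,y,v_\alpha^{-1}(q))$. The chain and product rules distribute the $\ell+1$ derivatives onto $\mcD_\alpha\circ v_\alpha^{-1}$, onto the inner map $v_\alpha^{-1}$, and onto the momentum slot of $g^\alpha$. The resulting terms involve $\nabla_p^i g^\alpha$ with $0\le i\le \ell+1$.

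The support of $g^\alpha$ forces $|v_\alpha^{-1}(q)|\le\beta$, that is $|q|\le\gamma<1$. On this compact set Lemma \ref{lem:v} gives uniform bounds on all derivatives of $v_\alpha^{-1}$ and $\mcD_\alpha$. Using $\|g^\alpha\|_\infty\le\|f_0\|_\infty$ and $\mcG_p^{\ell+1}\ge 1$, we get
\[
|D^\delta_q\mcH_\rho|\lesssim \mcG_p^{\ell+1}(t).
\]
So the remainder is $\lesssim t^{-1}\mcG_p^{\ell+1}(t)$. Taking the supremum over $x$ and combining (I)–(III) proves the $\rho$ estimate.

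The current density argument is identical. We use the analogous expansion for $t^3 j$ with $\mcH_j=q\,\mcH_\rho$. The extra factor $q$ is smooth and bounded with all its derivatives on $|q|\le\gamma$, so it only changes constants. Lemma \ref{lem:rhojderiv-l} supplies the $j_\ell$ convergence bound in place of the $\rho_\ell$ one.

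The only mildly delicate point is the remainder in (III), because the intermediate point $\frac{x-\theta y}{t}$ depends on $y$ and $\theta$. This is harmless: we bound $D^\delta_q\mcH_\rho$ by its supremum over all $q$, and the compact momentum support makes that supremum effectively a supremum over $|q|\le\gamma$. No smallness or decay of the fields is needed anywhere; the whole argument is bookkeeping around \eqref{rho_exp}.
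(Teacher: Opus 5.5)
Your proposal is correct and follows essentially the same route as the paper. The paper also multiplies the expansion \eqref{rho_exp} by $t^{\ell}$ and splits the result by the triangle inequality into three pieces: the lower-order terms $t^{\ell-m}\rho_m$, the difference $\rho_\ell-\rho_{\ell,\infty}$ (bounded via \eqref{rho-ell}, which is the $k=0$ case of Lemma \ref{lem:rhojderiv-l}), and the Taylor remainder, which is bounded by $t^{-1}\mcG_p^{\ell+1}(t)$ using the uniform spatial support. The current density is handled identically by expanding $\mcH_j=q\,\mcH_\rho$.
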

\begin{proof}
		We begin by fixing $\ell \in \mathbb{N}$. Then, utilizing the representation of the charge density within \eqref{rhop}, we have
		\[\rho(t,x)=4\pi t^{-3}  \int \mcH_\rho\left( t,y,\frac{x-y}{t}\right) dy.\]
		Using the Taylor expansion in \eqref{rho_exp}, separating the order $\ell$ term, and estimating derivatives of the remainder using \eqref{Hrho}, we find
		\[t^3\rho(t,x)=\sum_{m=0}^{\ell-1} t^{-m}\rho_m(t,x) + t^{-\ell} \rho_\ell(t,x) + t^{-(\ell+1)}\mcE (t) \]
		where 
		$$|\mcE(t)| = \sup_{x \in \mathbb{R}} \sup_{\theta \in [0,1]} \left | 4\pi  \sum_{|\delta| = \ell+1}  \frac{1}{\delta!} \int_{|y| \leq \mcR} (-y)^\delta D^\delta_q\,\mcH_\rho\left(t,y,\frac{x - \theta y}{t} \right) dy \right | \lesssim \mcG_p^{\ell+1}(t).$$ 
		Using \eqref{eq:A-l}, \eqref{eq:rho-l}, \eqref{eq:A-l-inf}, and \eqref{eq:rho-l-inf} the $\ell$th order approximation of $\rho(t,x)$ satisfies 
		\begin{equation}
		\label{rho-ell}
			\left| \rho_\ell(t,x)-\rho_{\ell,\infty}\left(\frac{x}{t}\right) \right| \lesssim \sum_{j=0}^\ell\max_{\substack{\alpha = 1, ..., N \\ |\delta| = \ell}} \left\| \nabla_q^j \biggl (F^{\alpha, \delta}(t)  - F^{\alpha, \delta}_\infty \biggr ) \right\|_\infty.
		\end{equation}
		Thus, expanding $\rho(t,x)$ as above, we find
		\begin{align*}
			\left| t^{\ell+3}\rho(t,x) - \rho_{\ell,\infty}\left(\frac{x}{t}\right)\right| &\leq \left| \sum_{m=0}^{\ell-1} t^{\ell-m}\rho_m(t,x) + \rho_{\ell}(t,x)-\rho_{\ell,\infty}\left(\frac{x}{t}\right) \right| + t^{-1} \left |\mcE(t) \right | \\
			&\lesssim  \sum_{m=0}^{\ell-1} t^{\ell-m} \left\|\rho_m(t)\right\|_\infty + \left| \rho_\ell(t,x)-\rho_{\ell,\infty}\left(\frac{x}{t}\right) \right| + t^{-1} \mcG_p^{\ell+1}(t)\\
		\end{align*}
which, upon using \eqref{rho-ell}, yields the stated result. 
The proof for $j(t,x)$ is identical upon expanding $\mcH_j$.
\end{proof}

\section{Proofs of Theorems \ref{T1} and \ref{T2}}
\label{sec:proof}
We first focus on proving Theorem \ref{T2}, which will essentially imply Theorem \ref{T1} subject to the construction of suitable scattering states.
\subsection{Base Case}
We begin by assuming the condition
\begin{equation} 
\tag{$\mcA_0$}
\label{A0}
    \mcA_{0,\infty}\equiv 0
\end{equation} so that $\rho_{0,\infty} = j_{0,\infty} = W_{0,\infty}^E = W_{0,\infty}^B = E_{0,\infty} = B_{0,\infty}\equiv 0$, as well.
As a reminder, this condition implies $\mcR(t) \lesssim 1$ due to Lemma \ref{Xsupp}.
We wish to prove that \eqref{A0} implies 
\begin{equation}\tag{$\mathcal{Q}_0$} \label{eq:Q0}
    \left\{
\begin{aligned}
     \displaystyle
        \sup_{x\in\R^3} \left| t^{4}\rho(t,x)-\rho_{1,\infty}\rndP{\frac{x}{t}} \right| &\lesssim t^{-1},\\ \displaystyle
        \sup_{x\in\R^3} \left| t^{4} j(t,x)-j_{1,\infty}\rndP{\frac{x}{t}} \right| &\lesssim t^{-1},\\ \displaystyle
        \sup_{x\in\R^3} \left| t^{3} (E,B)(t,x)- (E,B)_{1,\infty}\rndP{\frac{x}{t}} \right| &\lesssim t^{-1} \\ \displaystyle
        \sup_{x\in\R^3} \left| t^{3} \nabla_x(E,B)(t,x)- \nabla_q(E,B)_{0,\infty}\rndP{\frac{x}{t}} \right| &\lesssim t^{-1}\\ \displaystyle
        \sup_{x\in\R^3} \left| t^{4}\nabla_x^2 (E,B)(t,x)- \nabla_q^2(E,B)_{0,\infty}\rndP{\frac{x}{t}} \right| &\lesssim t^{-1}\ln^2(t),\\ \displaystyle
   	\mcG_{x,p}^2 (t)+\mcG_p^2(t) &\lesssim 1.
\end{aligned}\right .
\end{equation}
To achieve this goal, we outline the following procedure. 
First, in order to obtain \eqref{eq:Q0} we need to establish the optimal decay rates of the fields and their derivatives, namely
$\mcK_0(t) \lesssim t^{-3}$ and $\mcK_1(t) \lesssim t^{-4}$.
These rates depend explicitly on uniform-in-time bounds for $\mcG_p^1(t)$ and $\mcG_p^2(t)$, respectively. 
Next, the convergence of the spatial average and its derivatives must be refined using the updated field decay so that
\begin{align*}
    \infnorm{F^{\alpha,0}(t)-F^{\alpha,0}_\infty}&\lesssim t^{-2},\\
    \infnorm{F^{\alpha,\delta}(t)-F^{\alpha,\delta}_\infty}&\lesssim t^{-1},\\
    \infnorm{\nabla_p\rndP{F^{\alpha,\delta}(t)-F^{\alpha,\delta}_\infty}}&\lesssim t^{-1}
\end{align*}
for $|\delta| = 1$. 
With this, improved decay rates of the densities and field kernels can be achieved
\[ \infnorm{\rho_0(t)}, \infnorm{j_0(t)},\infnorm{W^E_0(t)},\infnorm{W^B_0(t)}\lesssim t^{-2}.\]
Finally, these improvements will give rise to the stated convergence of the densities and fields to their limits.\\

\textbf{Step 1:}  Optimal decay rate of fields\\

We begin this process by obtaining the best possible decay rate for $\mcK_0(t)$. From Theorem \ref{oldT2}, we deduce the improved decay rate
\begin{equation}
\label{K0_Base}
\mcK_0(t)\lesssim t^{-3}\ln^8(t),
\end{equation}
and Lemma \ref{lem:glassey-strauss} implies initial rates of decay on derivatives of the fields, namely
\begin{equation}
\label{K1K2_Base}
\mcK_1(t) \lesssim t^{-3}\ln(t) \qquad \mathrm{and} \qquad \mcK_2(t) \lesssim t^{-4}\ln(t).
\end{equation}
With this, Lemma \ref{Xsupp} guarantees that the particle distribution $g^\alpha$ is compactly supported in the spatial domain as $\mcR(t) \lesssim 1$.
Furthermore, $\mcG^0(t)\lesssim 1$ due to the boundedness of the initial data. 
Invoking Lemmas \ref{Dpg} and \ref{D2g} with \eqref{K0_Base} and \eqref{K1K2_Base} then provides initial decay rates on momentum derivatives of $g^\alpha$, namely
\begin{equation}
\label{G1G2_Base}
\mcG_p^1(t) \lesssim \ln^2(t) \qquad \mathrm{and} \qquad  \mcG_p^2(t) \lesssim \ln^4(t).
\end{equation}
Applying Lemma \ref{DF0} with $k=0$ and  $k=1$ and inserting \eqref{K0_Base} and \eqref{K1K2_Base}, we further establish initial rates for the convergence of the spatial average so that
for every $\alpha = 1, ..., N$
$$ \infnorm{F^{\alpha,0}(t)-F^{\alpha,0}_\infty} \lesssim \int_t^\infty \rndP{s\mcK_1(s)\mcG^0(s)+\mcK_0(s)\mcG_p^1(s)}\,ds \lesssim \int_t^\infty s^{-2}\ln(s) \,ds
     \lesssim t^{-1}\ln(t)$$
and
\begin{align*}
    \infnorm{\nabla_p \rndP{F^{\alpha,0}(t)-F^{\alpha,0}_\infty}}&\lesssim\int_t^\infty \Bigl[ s \mcK_1(s)\mcG^0(s) + \mcK_0(s) \mcG_p^2(s) + s^2 \mcK_2(s) \mcG^0(s) + s\mcK_1(s) \mcG_p^1(s) + \mcK_0(s) \mcG_p^1(s) \Bigr] ds\\
     &\lesssim \int_t^\infty s^{-2}\ln^3(s) \,ds \lesssim t^{-1}\ln^3(t).
\end{align*}
With this, we improve the decay rates of field derivatives by invoking Lemma \ref{Kconv} with $k=1$, $\ell = 0$, noting $E_{0,\infty} \equiv 0$, and using \eqref{K0_Base}-\eqref{G1G2_Base} to find 
\begin{align*}
    t^3\sup_{x\in\R^3}\tildes{\nabla_xE(t,x)} &\lesssim t^{-2}\Big(\sup_{|u| \leq \mcU} \mcK_0\rndP{t(1-|u|)}+t \sup_{|u| \leq \mcU}\mcK_1\rndP{t(1-|u|)}\Big)\mcG_p^1(t)+\infnorm{F^{\alpha,0}(t)-F^{\alpha,0}_\infty}\\&\qquad\qquad + \sup_{|u| \leq \mcU}\infnorm{\nabla_p \rndP{F^{\alpha,0}(t(1-|u|))-F^{\alpha,0}_\infty}} +t^{-1}\mcG_p^2(t)\\
    &\lesssim t^{-1}\ln^4(t).
\end{align*}
As the decay of the magnetic field is identical, we have 
\begin{equation}
\label{K1_Base_prelim}
\mcK_1(t)\lesssim t^{-4}\ln^4(t).
\end{equation}
This provides a refined estimate of $\mcG_p^1(t)$ by invoking Lemma \ref{Dpg} and using \eqref{K0_Base} and \eqref{K1_Base_prelim} to find
$$ \mcG_p^1(t)\lesssim 1+\int_1^t\rndP{s^2\mcK_1(s)+s\mcK_0(s)}\,ds \lesssim 1+\int_1^t s^{-2} \ln^{8}(s)\,ds \lesssim 1.$$
Similarly, the estimates of $\mcG_{x,p}^2(t)$ and $\mcG_p^2(t)$ can be refined by invoking Lemma \ref{D2g} to find
$$\mcG_{x,p}^2(t) \lesssim 1 + \int_1^t s^2 \mcK_1(s)\,ds \lesssim 1$$
and
$$\mcG_p^2(t) \lesssim 1 + \int_1^t \left (s^3 \mcK_2(s) + \ln^2(s) \left [s^2 \mcK_1(s) + s \mcK_0(s) \right ] \right )\,ds  \lesssim 1+\int_1^t s^{-1} \ln(s)\,ds \lesssim \ln^2(t).$$
Of course, this improvement further provides a minor update on the estimate of $\mcK_1(t)$ from the above computations so that
\begin{equation}
\label{K1_Base}
\mcK_1(t)\lesssim t^{-4}\ln^2(t).
\end{equation}
The uniform-in-time bound on first derivatives of $g^\alpha$ now further refines the convergence estimate of the spatial average by applying Lemma \ref{DF0} with $k=0$ and using \eqref{K1_Base} so that
$$ \infnorm{F^{\alpha,0}(t)-F^{\alpha,0}_\infty} \lesssim \int_t^\infty \rndP{s\mcK_1(s)\mcG^0(s)+\mcK_0(s)\mcG_p^1(s)}\,ds \lesssim \int_t^\infty s^{-3}\ln^8(s) \,ds \\
     \lesssim t^{-2}\ln^8(t).$$
The uniform bound on $\mcG_p^1(t)$ also allows us to deduce the optimal field decay rate using Lemma \ref{Kconv} with $k=\ell=0$, which yields
$$ t^2\sup_{x\in\R^3}\tildes{E(t,x)}\lesssim t^{-1}\sup_{|u| \leq \mcU} \mcK_0\rndP{t(1-|u|)}\mcG^0(t)+\sup_{|u| \leq \mcU}\infnorm{F^{\alpha,0}(t(1-|u|))-F^{\alpha,0}_\infty} +t^{-1}\mcG_p^1(t) \lesssim t^{-1}.$$
As the decay rate of the magnetic field is identical, we have 
\begin{equation}
\label{K0_Base1}
\mcK_0(t)\lesssim t^{-3}.\\
\end{equation}

\textbf{Step 2:}   Optimal decay rate and convergence of field derivatives\\

Now, we can apply the same steps from above to refine the decay rate of the next order field derivatives, ultimately resulting in $\mcK_{1}(t)\lesssim t^{-4}$.
First, we find an initial decay estimate on third-order field derivatives by invoking Lemma \ref{lem:glassey-strauss} with $k=3$ to find
\begin{equation}
\label{K3_Base}
\mcK_{3}(t)\lesssim t^{-5}\ln(t).
\end{equation}

With this, we construct initial estimates on 2nd- and 3rd-order derivatives of $g^\alpha$ with a lemma whose proof is postponed until a later section.
\begin{lemma}
\label{Dng0}
Assume \eqref{K1K2_Base} and \eqref{K1_Base}-\eqref{K3_Base}, as well as the known estimates 
$$\mcG_{x,p}^1(t) + \mcG_p^1(t) \lesssim 1, \qquad \mcG_{x,p}^2(t) \lesssim 1,  \qquad \mathrm{and}  \qquad  \mcG_p^2(t) \lesssim \ln^2(t).$$
Then, we have
$\mcG_{x,p}^{3}(t) \lesssim \ln^2(t)$
and
\begin{equation}
\label{G3_Base}
\mcG_p^{3}(t)\lesssim \ln^2(t).
\end{equation}
\end{lemma}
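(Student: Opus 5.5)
The approach is to differentiate the translated Vlasov equation \eqref{RVMg} three times and integrate along the characteristics of $\mcV_g$, as presumably done in Lemmas \ref{Dpg} and \ref{D2g} for first and second order. Fix a multi-index pair $(\beta_x,\beta_p)$ with $|\beta_x|+|\beta_p|=3$. Applying $D_x^{\beta_x}D_p^{\beta_p}$ to \eqref{RVMg} gives $\mcV_g\big(D_x^{\beta_x}D_p^{\beta_p}g^\alpha\big)=\mathcal{S}$, where $\mathcal{S}$ collects the commutator terms. In these terms at least one derivative falls on the coefficient field $\frac{e_\alpha}{m_\alpha}K^\alpha(t,x+v_\alpha(p)t,p)\big(-t\A_\alpha(p),\,I\big)$, and the remaining derivatives fall on $g^\alpha$. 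Each $x$-derivative of the coefficient costs a factor $\mcK_{j+1}$ in place of $\mcK_j$. Each $p$-derivative costs at most a factor $t$ together with one more field derivative, via $\nabla_p[K(t,x+v_\alpha(p)t,p)]=t\A_\alpha\nabla_xK+\dots$. Since $g^\alpha$ is constant-speed transported, $\|D^{\beta}g^\alpha(t)\|_\infty\le \|D^\beta g^\alpha_0\|_\infty+\int_1^t\|\mathcal{S}(s)\|_\infty\,ds$. The spatial arguments lie in $|x|\lesssim\mcR$, so the field factors are controlled by $\mcK_j(s)$.

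\textbf{Mixed derivatives ($|\beta_x|\ge1$).} Here the source terms are of the form $s^{a+1}\mcK_{a+1}(s)\times(\text{derivative of }g\text{ of order }3-a)$ plus lower-order analogues. Write the top-order part, with a single field derivative and third derivatives of $g$, as $(s^2\mcK_1+s\mcK_0)\,\mcG^3$. By \eqref{K0_Base1} and \eqref{K1_Base} this coefficient is $\lesssim s^{-2}\ln^2 s$, which is integrable, so Gronwall absorbs it. The remaining terms pair second derivatives of $g$ with $s^3\mcK_2\lesssim s^{-1}\ln s$, or first derivatives with $s^4\mcK_3\lesssim s^{-1}\ln s$. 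Because at least one derivative is spatial, the factor of $g$ it multiplies is a $\mcG^2_{x,p}$-type quantity, bounded by $1$. Those terms thus give $\int_1^t s^{-1}\ln s\,ds\lesssim\ln^2 t$. The only exception is a pure-$p$ second derivative of $g$ that can appear paired with $s\mcK_1$-type factors, and this contributes $\int s^{-3}\ln^4 s$, which is harmless. This yields $\mcG^3_{x,p}\lesssim \ln^2 t$.

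\textbf{Pure momentum derivatives.} For $\beta_x=0$ the worst source terms are:
\begin{itemize}
\item $s^3\mcK_2\cdot\nabla_xg$ and similar terms with fields of order $\le 2$ times $x$-derivatives of $g$, which are bounded by \eqref{K1K2_Base} and $\mcG^1_{x,p}\lesssim1$;
\item $s^4\mcK_3\cdot\nabla_xg\lesssim s^{-1}\ln s$, by \eqref{K3_Base};
\item $s^2\mcK_1\,\nabla_p\nabla_xg$ and $s\mcK_1$, $s\mcK_0$ terms multiplied by $\nabla_p^2g$, which contribute $\ln^2 s\cdot s^{-2}$;
\item the coupling term $s\A K\cdot\nabla_x\nabla_p^2 g$, which is the piece of $\mcG^3_{x,p}$ just bounded by $\ln^2$, multiplied by $s^2\mcK_1\lesssim s^{-2}\ln^2s$.
\end{itemize}
Integrating, the dominant contribution is $\int_1^t s^{-1}\ln s\,ds\sim \ln^2 t$, which gives \eqref{G3_Base}.

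The main obstacle is the bookkeeping of which factor of $g$ pairs with $s^3\mcK_2$, given that $\mcK_2$ is only known to be $\lesssim s^{-4}\ln s$. If such a term multiplied $\nabla_p^2 g$, which is only known to be $\lesssim\ln^2$, the integral would produce $\ln^4$ and the claim would fail. I would check carefully that a second field derivative always arises with exactly two derivatives spent on the coefficient. Then at most one derivative remains on $g$, namely the first-order derivative coming from $\mcV_g$ itself, and it is bounded by $\mcG^1\lesssim1$. If needed, I would handle the $x$ and $p$ cases simultaneously by a joint Gronwall argument on $\mcG^3_{x,p}+\mcG^3_p$, with the $\ln^2$ forcing term.
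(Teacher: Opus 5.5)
Your approach works, but only in the joint form you mention at the end, and in that form it is a simpler route than the paper's. The sequential order (mixed derivatives first, then pure momentum derivatives) is circular. In the equation for $D_xD_p^2g^\alpha$, consider the commutator term in which $D_x$ falls on $K^\alpha$ and the $\nabla_p$ part of $\mcV_g$ acts on $D_p^2g^\alpha$: it is $\mcK_1(s)\,|\nabla_p^3g^\alpha|$. So the mixed bound already needs $\mcG^3_p$, just as the pure-momentum equation needs $\nabla_x\nabla_p^2g^\alpha$.

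The joint Gr\"onwall closes for two reasons:
\begin{itemize}
\item by \eqref{K0_Base1} and \eqref{K1_Base}, every top-order coefficient is $\lesssim s\mcK_0+s^2\mcK_1\lesssim s^{-2}\ln^2s$;
\item every lower-order source is $\lesssim s^{-1}\ln s$, using \eqref{K1K2_Base}, \eqref{K3_Base}, $\mcG^1_{x,p}+\mcG^1_p\lesssim1$, $\mcG^2_{x,p}\lesssim1$ and $\mcG^2_p\lesssim\ln^2 s$.
\end{itemize}
Thus $\Phi(t)=\max_\alpha\sum_{|\beta_x|+|\beta_p|=3}\|D_p^{\beta_p}D_x^{\beta_x}g^\alpha(t)\|_\infty$ satisfies
\[
\Phi(t)\lesssim 1+\int_1^t\bigl(s^{-1}\ln s+s^{-2}\ln^2 s\,\Phi(s)\bigr)\,ds ,
\]
so $\Phi(t)\lesssim\ln^2t$, which gives both claims at once. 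All you need about $\mcV_g$ is that it is a transport operator with no zeroth-order term; the transport is not at constant speed.

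The justification in your last paragraph is wrong, even though the conclusion you need is true. When two derivatives fall on the coefficient, $g^\alpha$ carries the third one plus the one from $\mcV_g$. That is a second derivative, not a first. Also, $\mcK_2$ can appear with all three derivatives on the coefficient.

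The real reason $s^3\mcK_2$ never multiplies $\nabla_p^2g^\alpha$ is the structure you noted at the start. Each power of $t$ produced by $\nabla_p$ acting on $K^\alpha(t,x+v_\alpha(p)t,p)$ comes with a spatial derivative of the field. So a third factor $s$ next to $\mcK_2$ can only come from the $-t\A_\alpha\nabla_x$ part of $\mcV_g$ (or its $p$-derivatives), which puts an $x$-derivative on $g^\alpha$. Hence $s^3\mcK_2$ multiplies $\nabla_x\nabla_pg^\alpha$, $\nabla_x^2g^\alpha$ or $\nabla_xg^\alpha$, all of which are bounded. Meanwhile $\nabla_p^2g^\alpha$ only meets $\mcK_0+s\mcK_1+s^2\mcK_2\lesssim s^{-2}\ln s$.

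The paper proves this lemma as the case $n=0$ of Lemma \ref{Dkg}, and the two routes differ as follows.
\begin{itemize}
\item The paper splits third derivatives by the number $j$ of momentum derivatives ($\mfG^{3,j}$) and runs Gr\"onwall on the weighted sum $\mfG^3=1+\sum_j t^{-j}\mfG^{3,j}$. It uses only $\mcK_1\lesssim t^{-n-3}$ from \eqref{eq:K1estimate}, under which $s^2\mcK_1$ is not integrable for $n=0$. The weights $t^{-j}$ absorb the factor $s$ incurred when a momentum derivative is traded for a spatial one.
\item The price of the paper's scheme is that Gr\"onwall then only gives $\mfG^{3,\ell}\lesssim t^\ell$, which must be refined one $\ell$ at a time. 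In exchange, the scheme is uniform in $n$ and reproduces the Vlasov--Poisson argument of \cite{Mattingly2025}.
\item Your version uses the sharper hypotheses \eqref{K1_Base} and \eqref{K0_Base1} that this lemma already provides. The unweighted coupling is then integrable, and the $\ln^2t$ bound for all third derivatives follows in one step. There are no intermediate polynomial bounds and no back-and-forth between $\mfG^{3,2}$ and $\mfG^{3,3}$.
\end{itemize}
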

\begin{proof}
The proof is contained in Section \ref{sec:g_deriv} below.
\end{proof}
With these initial estimates, we find a preliminary convergence estimate for second-order derivatives of $F^{\alpha,0}(t,p)$ by invoking Lemma \ref{DF0} with $k=2$ to find
$$\infnorm{\nabla_p^{2}\rndP{F^{\alpha,0}(t)-F_\infty^{\alpha,0}}}\lesssim \int_t^\infty s^{-2} \ln(s) ds \lesssim t^{-1}\ln(t)$$
for every $\alpha = 1, ..., N$.
This result and \eqref{G3_Base} allow us to update the decay estimate on $\mcK_2(t)$ using Lemma \ref{Kconv} with $k=2$ and $\ell=0$ so that
\begin{align*}
 t^4\sup_{x\in\R^3}\tildes{\nabla^2_xE(t,x)} & \lesssim \frac{1}{t^3} \left (\sum_{j=0}^2 t^j \sup_{|u| \leq \mcU} \mcK_j (t (1-|u|)) \right ) \mcG_p^2 (t)\\
& \quad  +  \sum_{j=0}^2 \max_{\alpha = 1,..., N} \sup_{|u| \leq \mcU} \left \Vert \nabla^j_q \biggl (F^{\alpha, 0} (t(1-|u|)) - F_\infty^{\alpha, 0} \biggr )\right \Vert_\infty + t^{-1} \mcG_p^{3}(t)\\
& \lesssim t^{-1}\ln^2(t).
\end{align*}
As the same result holds for derivatives of the magnetic field, we have the improved decay bound
\begin{equation}
\label{K2_Base1}
\mcK_{2}(t)\lesssim t^{-5}\ln^2(t)
\end{equation}
as well as the convergence estimate
\begin{equation}
\label{QEB2_final}
\sup_{x\in\R^3}\tildes{t^4\nabla^2_x(E,B)(t,x) - \nabla^2_q (E,B)_{0,\infty}\left (\frac{x}{t} \right)}  \lesssim t^{-1} \ln^2(t).
\end{equation}

Now that $\mcK_2(t)$ is known to decay faster, we apply Lemma \ref{D2g} to deduce the uniform-in-time bound 
$$\mcG_p^2(t) \lesssim 1 + \int_1^t \left (s^3 \mcK_2(s) + \ln^2(s) \left [s^2 \mcK_1(s) + s \mcK_0(s) \right ] \right )\,ds  \lesssim 1+\int_1^t s^{-2} \ln^4(s)\,ds \lesssim 1.$$
Finally, \eqref{K2_Base1} and the improvement to $\mcG_p^{2}(t)$ allow us to update the convergence estimate of $\nabla_pF^{\alpha,0}$ using Lemma \ref{DF0} with $k=1$, which yields
$$\infnorm{\nabla_p\rndP{F^{\alpha,0}(t)-F_\infty^{\alpha,0}}}\lesssim \int_t^\infty s^{-3} \ln^2(s) \ ds \lesssim t^{-2}\ln^2(t).$$
 Next, applying these new decay rates to Lemma \ref{Kconv} with $k=1$ and $\ell=0$, we find
\begin{align*}
    \sup_{x\in\R^3}\tildes{t^3\nabla_xE(t,x) - \nabla_q E_{0,\infty} \left ( \frac{x}{t} \right )} &\lesssim t^{-2}\Big(\mcK_0\rndP{t(1-|u|)}+t\mcK_1\rndP{t(1-|u|)}\Big)\mcG_p^1(t)+\infnorm{F^{\alpha,0}(t(1-|u|))-F^{\alpha,0}_\infty}\\&\qquad\qquad +\infnorm{\nabla_p \rndP{F^{\alpha,0}(t(1-|u|))-F^{\alpha,0}_\infty}} +t^{-1}\mcG_p^2(t)\\
    &\lesssim t^{-1},
\end{align*}
and as the decay rate for the magnetic field is identical, we achieve the optimal decay result $\mcK_1(t)\lesssim t^{-4}$ along with the convergence estimate
\begin{equation}
\label{QEB1}
\sup_{x\in\R^3} \left| t^{3} \nabla_x(E,B)(t,x)- \nabla_q(E,B)_{0,\infty}\rndP{\frac{x}{t}} \right| \lesssim t^{-1}.\\
\end{equation}

%

\textbf{Step 3:}  Convergence of densities and fields\\

Now that the estimates for the fields and their first-order derivatives have been determined, we can update the rate of convergence for $F^{\alpha,0}(t)$ using Lemma \ref{DF0} with $k=0$ so that
$$ \infnorm{F^{\alpha,0}(t)-F^{\alpha,0}_\infty} \lesssim \int_t^\infty \Big(s\mcK_1(s)\mcG^0(s)+\mcK_0(s)\mcG_p^1(s)\Big)\,ds \lesssim t^{-2},$$
which gives 
$$\infnorm{\rho_0(t)},\infnorm{j_0(t)},\infnorm{W_0^B(t)},\infnorm{W_0^E(t)}\lesssim t^{-2}$$ 
upon invoking Lemma \ref{lem:rhojderiv-l} with $k=\ell=0$ and noting that each of the respective limits vanishes.
We then use Lemma \ref{DFell} with $k=0$ and $k=1$, as well as, $|\delta|=1$ to find
$$\infnorm{F^{\alpha,\delta}(t)-F^{\alpha,\delta}_\infty}\lesssim \int_t^\infty \Big(s\mcK_1(s)\mcG^0(s)+s\mcK_0(s)\mcG^0(s)+\mcK_0(s)\mcG_p^1(s)\Big)\,ds \lesssim t^{-1}$$
and
\begin{align*}
    \infnorm{\nabla_p\rndP{F^{\alpha,\delta}(t)-F^{\alpha,\delta}_\infty}}&\lesssim \int_t^\infty \Big\{\Big(s\mcK_1(s)\mcG^0(s)+s\mcK_0(s)\mcG^0(s)+\mcK_0(s)\mcG_p^2(s)\Big)\\&\qquad\qquad+s\Big(s\mcK_2(s)\mcG^0(s)+s\mcK_1(s)\mcG^0(s)+\mcK_1(s)\mcG_p^1(s)\Big)\\&\qquad\qquad+\Big(s\mcK_1(s)\mcG_p^1(s)+s\mcK_0(s)\mcG_p^1(s)+\mcK_0(s)\mcG_p^1(s)\Big)\Big\}\,ds\\
    &\lesssim t^{-1}.
\end{align*}

Finally, we apply Lemma \ref{rhoconv} with $\ell=1$, which gives
\begin{align*}
\sup_{x\in\R^3} \left| t^4\rho(t,x)-\rho_{1,\infty}\rndP{\frac{x}{t}} \right| &\lesssim t\infnorm{\rho_0(t)}\!+\infnorm{F^{\alpha,\delta}(t)-F^{\alpha,\delta}_\infty}\!+\infnorm{\nabla_p\rndP{F^{\alpha,\delta}(t)-F^{\alpha,\delta}_\infty}}\!+t^{-1}\mcG_p^2(t)\\
&\lesssim t^{-1}.
\end{align*}
and analogously, \[\sup_{x\in\R^3} \left| t^4 j(t,x)-j_{1,\infty}\rndP{\frac{x}{t}} \right| \lesssim t^{-1},\]
and Lemma \ref{Kconv} with $k=0$, $\ell=1$ so that
\begin{align*}
    \sup_{x\in\R^3} \left| t^3E(t,x)-E_{1,\infty}\rndP{\frac{x}{t}} \right| &\lesssim t\infnorm{W^E_0(t)}+t^{-1}\mcK_0(t(1-|u|))\mcG^0(t)+\infnorm{F^{\alpha,\delta}(t(1-|u|))-F^{\alpha,\delta}_\infty}\\&\qquad\qquad +\infnorm{\nabla_p\rndP{F^{\alpha,\delta}(t(1-|u|))-F^{\alpha,\delta}_\infty}}+t^{-1}\mcG_p^2(t)\\
    &\lesssim t^{-1}.
\end{align*}
As the convergence of $B$ is the same, we have 
\[\sup_{x\in\R^3} \left| t^3 (E,B)(t,x)-(E,B)_{1,\infty}\rndP{\frac{x}{t}} \right| \lesssim t^{-1}\]
and thus, combining these results with \eqref{QEB2_final}, \eqref{QEB1}, and the uniform bounds on $\mcG_{x,p}^2(t)$ and $\mcG_p^2(t)$, the conditions within \eqref{eq:Q0} have been established, completing the base case.

\subsection{Induction Step}
To address the inductive step we fix $n \geq 1$ and assume $\mcP_{n-1} \Rightarrow \mcQ_{n-1}$ and $\mcP_n$, so that we must then establish $\mcQ_n$.
Note that, as $\mcP_n$ implies $\mcP_{n-1}$, we can immediately deduce $\mcQ_{n-1}$, as well.
Thus, by the induction hypothesis
\begin{equation} 
\label{eq:Pn}
\tag{$\mcP_n$} 
\forall \ell=0,...,n \quad \mathrm{and} \quad |\delta|=\ell, \qquad \mcA_{\delta,\infty}\equiv 0
\end{equation}
we further deduce
\begin{equation} 
\label{eq:Qnm1}\tag{$\mcQ_{n-1}$}
    \left\{
\begin{aligned}
    \sup_{x\in\R^3} \left| t^{n+3}\rho(t,x)-\rho_{n,\infty}\rndP{\frac{x}{t}} \right| &\lesssim t^{-1},\\
    \sup_{x\in\R^3} \left| t^{n+3} j(t,x)-j_{n,\infty}\rndP{\frac{x}{t}} \right| &\lesssim t^{-1},\\
    \sup_{x\in\R^3} \left| t^{n+2} (E,B)(t,x)- (E,B)_{n,\infty}\rndP{\frac{x}{t}} \right| &\lesssim t^{-1},\\
    \sup_{x\in\R^3} \left| t^{n+2} \nabla_x^j(E,B)(t,x)- \nabla_q^j(E,B)_{n-j,\infty}\rndP{\frac{x}{t}} \right| &\lesssim t^{-1} \qquad \forall j=1,...,n,\\
    \sup_{x\in\R^3} \left| t^{n+3}\nabla_x^{n+1} (E,B)(t,x)- \nabla_q^{n+1}(E,B)_{0,\infty}\rndP{\frac{x}{t}} \right| &\lesssim t^{-1}\ln^2(t),\\
    \mcG_{x,p}^{n+1}(t)+\mcG_p^{n+1}(t) &\lesssim 1.
    \end{aligned}\right .
\end{equation}
Because \eqref{eq:Pn} implies $\rho_{n,\infty}, j_{n,\infty} \equiv 0$ and $(E,B)_{\ell,\infty} \equiv 0$ for all $\ell = 0, ..., n$, the above time-dependent quantities enjoy faster rates of decay, and thus we have the following set of estimates stemming directly from the induction hypothesis
\begin{equation}
	\label{IH}
	\left \{ \,
	\begin{aligned}
		\| \rho(t) \|_\infty & \lesssim t^{-n-4}, & \\
		\| j(t) \|_\infty & \lesssim t^{-n-4}, & \\
		\mcK_j(t) & \lesssim t^{-n-3} \qquad & \forall j \in \{0, ..., n\}, & \\
		\mcK_{n+1}(t) & \lesssim t^{-n-4}\ln^2(t),\\
		\mcG^{n+1}_{x,p}(t) + \mcG^{n+1}_{p}(t) & \lesssim 1.
	\end{aligned}
	\right .
\end{equation}

With this, we must establish
\begin{equation}\tag{$\mathcal{Q}_n$} \label{eq:Qn}
    \left\{
\begin{aligned}
     \displaystyle
        \sup_{x\in\R^3} \left| t^{n+4}\rho(t,x)-\rho_{n+1,\infty}\rndP{\frac{x}{t}} \right| &\lesssim t^{-1},\\ \displaystyle
        \sup_{x\in\R^3} \left| t^{n+4} j(t,x)-j_{n+1,\infty}\rndP{\frac{x}{t}} \right| &\lesssim t^{-1},\\ \displaystyle
        \sup_{x\in\R^3} \left| t^{n+3} (E,B)(t,x)- (E,B)_{n+1,\infty}\rndP{\frac{x}{t}} \right| &\lesssim t^{-1},\\ \displaystyle
        \sup_{x\in\R^3} \left| t^{n+3} \nabla_x^j(E,B)(t,x)- \nabla_q^j(E,B)_{n-j+1,\infty}\rndP{\frac{x}{t}} \right| &\lesssim t^{-1} \qquad \forall j=1,...,n+1,\\ \displaystyle
    	\sup_{x\in\R^3} \left| t^{n+4}\nabla_x^{n+2} (E,B)(t,x)- \nabla_q^{n+2}(E,B)_{0,\infty}\rndP{\frac{x}{t}} \right| &\lesssim t^{-1}\ln^2(t),\\ \displaystyle
    	\mcG_{x,p}^{n+2} (t)+\mcG_p^{n+2}(t) &\lesssim 1.
\end{aligned}\right .
\end{equation}

For clarity, we separate the inductive step into several smaller steps.\\

\textbf{Step 1:}  Preliminary estimates of highest order derivatives\\

We begin, as in the base case, by gathering preliminary estimates of $n+2$ and $n+3$ order derivatives.
In particular, Lemma \ref{lem:glassey-strauss} implies initial rates of decay on these derivatives of the fields, namely
\begin{equation}
\label{Knp2Knp3_initial}
\mcK_{n+2}(t)\lesssim t^{-n-4}\ln(t) \qquad \mathrm{and} \qquad \mcK_{n+3}(t)\lesssim t^{-n-5}\ln(t).
\end{equation}

Next, we state the following lemma, which generalizes Lemma \ref{Dng0} for the induction step.
\begin{lemma}
\label{Dng}
Let $n \in \N$ be given and assume \eqref{Knp2Knp3_initial} and \eqref{eq:Pn}, which implies \eqref{IH}.
Then, we have
\begin{equation}
\label{DngDnE}
\mcG_p^{n+2}(t) \lesssim 1 + \int_1^t s^{n+3} \mcK_{n+2}(s) \ ds,
\end{equation}
$$\mcG_{x,p}^{n+2}(t) \lesssim 1, \qquad  \mcG_{x,p}^{n+3}(t) \lesssim \ln^2(t), \qquad \mathrm{and} \qquad  \mcG_p^{n+3}(t)\lesssim \ln^2(t)$$ 
and, as a preliminary estimate,
$$\mcG_p^{n+2}(t)\lesssim \ln^2(t).$$
\end{lemma}
\begin{proof}
The proof is contained in Section \ref{sec:g_deriv} below.
\end{proof} 
With this, we have
\begin{equation}
\label{Gnp23}
\mcG_{x,p}^{n+2}(t)\lesssim 1, \qquad \mcG_p^{n+2}(t)\lesssim \ln^2(t), \qquad \mathrm{and} \qquad \mcG_p^{n+3}(t)\lesssim \ln^2(t)
\end{equation}
and use them to estimate spatial averages and their derivatives.
In this direction, we use Lemma \ref{DF0} 
with $k=0,...,n-1$
along with the uniform-in-time bounds on $\mcG_p^{n+1}(t)$ to find
\begin{equation}
\label{DkF0}
 \infnorm{\nabla^k_p\rndP{F^{\alpha,0}(t)-F^{\alpha,0}_\infty}} \lesssim \int_t^\infty \left ( \sum_{m=1}^{k+1} s^m\mcK_m(s) +\mcK_0(s)\right )\,ds \lesssim \int_t^\infty \Big\{ s^{k-n-2} + s^{-n-3}\Big\}\,ds
\lesssim t^{k-n-1}
\end{equation}
for $k=0,...,n-1$ and $\alpha=1,...,N$.
Additionally, invoking Lemma \ref{DF0} with $k=n$ yields
\begin{align*}
    \infnorm{\nabla^n_p\rndP{F^{\alpha,0}(t)-F^{\alpha,0}_\infty}} &\lesssim \int_t^\infty \bigg\{s^{n+1}\mcK_{n+1}(s)+ \sum_{m=0}^{n} s^m\mcK_m(s) \bigg\}\,ds
    \lesssim \int_t^\infty \Big\{ s^{n+1}s^{-n-4}\ln^{2}(s)+ s^ns^{-n-3} \Big\}\,ds\\
    &\lesssim t^{-2}\ln^{2}(t).
\end{align*}
In order to estimate the $(n+1)$st derivative, we separate the dependence on $\mcG_p^{n+2}(t)$ and again use Lemma \ref{DF0} with $k=n+1$ along with \eqref{Knp2Knp3_initial} so that 
\begin{align*}
    \infnorm{\nabla^{n+1}_p\rndP{F^{\alpha,0}(t)\!-\!F^{\alpha,0}_\infty}}
    &\!\lesssim\! \int_t^\infty\! \bigg\{ s^{n+2}\mcK_{n+2}(s)\!+\! s^{n+1}\mcK_{n+1}(s)\!+\!\sum_{m=1}^{n} s^m\mcK_m(s) \!+\!\mcK_0(s)\mcG_p^{n+2}(s) \bigg\}ds\\
    &\lesssim \int_t^\infty \Big\{{s^{-2}\ln(s)}+  {s^{-3}\ln^{2}(s)} +{s^{-3}} +{s^{-n-3}\ln^{2}(s)}\Big\}\,ds\\
    &\lesssim t^{-1}\ln(t).
\end{align*}
Finally, we apply Lemma \ref{DF0} with $k=n+2$ and separate the terms with $\mcG_p^{n+2}(t)$ and $\mcG_p^{n+3}(t)$ as they grow in time.
Similarly, we separate the $\mcK_{n+1}(t)$, $\mcK_{n+2}(t)$, and $\mcK_{n+3}(t)$ terms so their distinct decay rates may be applied individually. 
As the terms with the largest derivatives currently possess the most primitive estimates, they dominate within the integral. In particular, we find
\begin{align*}
    \infnorm{\nabla_p^{n+2}\rndP{F^{\alpha,0}(t)-F_\infty^{\alpha,0}}} &\!=\! \int_t^\infty \!\bigg\{s^{n+3}\mcK_{n+3}(s)+s^{n+2}\mcK_{n+2}(s)+s^{n+1}\mcK_{n+1}(s) + \!\sum_{m=1}^{n} s^{m}\mcK_{m}(s) \\
    &\qquad + \mcK_0(s)\mcG_p^{n+3}(s)+\big(s\mcK_1(s)+\mcK_0(s)\big)\mcG_p^{n+2}(s)\bigg\}\,ds\\
    &\lesssim \int_t^\infty \Big\{s^{n+3}s^{-n-5}\ln(s)+s^{n+2}s^{-n-4}\ln(s) + s^{n+1}s^{-n-4}\ln^{2}(s)\\
    &\qquad+s^{n}s^{-n-3} + s^{-n-3}\ln^{2}(s)+\big(s^{-n-2}+s^{-n-3}\big)\ln^{2}(s)\Big\}ds\\
    &\lesssim t^{-1}\ln(t).
\end{align*}

Equipped with these rates, we may apply Lemma \eqref{Kconv} with $k=n+2$ and $\ell=0$ to obtain a faster decay estimate of $\mcK_{n+2}(t)$.
In particular, we split each of the sums into four terms so that the $n+2,\,n+1$, $n$, and remaining $j=1,...,n-1$ terms can be estimated individually, which provides
\begin{align*}
    \sup_{x\in\R^3}\tildes{t^{n+4}\nabla_x^{n+2}E(t,x)-\nabla_q^{n+2}E_{0,\infty}\left (\frac{x}{t} \right) } & \lesssim t^{-n-3}\bigg(\sum_{j=0}^{n+2}t^j\sup_{|u|\leq \mathcal{U}}\mcK_j\rndP{t(1-|u|)}\bigg)\mcG_p^{n+2}(t)\\
     & +\sum_{j=0}^{n+2} \sup_{|u|\leq \mathcal{U}} \infnorm{\nabla_p^j \rndP{F^{\alpha,0}(t(1-|u|))-F^{\alpha,0}_\infty}} +t^{-1}\mcG_p^{n+3}(t)\\
         &\lesssim t^{-n-3}\Big(t^{n+2}t^{-n-4}\ln(t)+t^{n+1}t^{-n-4}\ln^{2}(t)+\sum_{j=0}^{n}t^jt^{-n-3}\Big)\ln^{2}(t)\\ 
    &\qquad \qquad +\Big(t^{-1}\ln(t)+t^{-1}\ln(t) +t^{-2}\ln^{2}(t)+\sum_{j=0}^{n-1}t^{j-n-1}\Big)+t^{-1}\ln^{2}(t)\\
    &\lesssim t^{-1}\ln^{2}(t).
\end{align*}
As the decay rate for $\nabla_x^{n+2}B(t,x)$ is the same, we have 
\begin{equation}
\label{Knp2FINAL}
\mcK_{n+2}(t)\lesssim t^{-n-5}\ln^{2}(t).
\end{equation} 

\textbf{Step 2:}  Refine decay rates of $(n+1)$st order field derivatives\\

The improved decay rate for $\mcK_{n+2}(t)$ immediately implies a uniform bound on the corresponding derivative of $g^\alpha$ via \eqref{DngDnE}, as 
\begin{equation}
\label{Gnp2}
\mcG_p^{n+2}(t) \lesssim 1+\int_1^t s^{n+3}\mcK_{n+2}(s) ds  \lesssim 1+\int_1^t s^{-2}\ln^2(s) \ ds \lesssim 1.
\end{equation}
This allows us to update the rate of convergence of the $(n+1)$st derivative of the spatial average and ensures that all derivatives of the spatial average up to order $n+1$ converge faster than $t^{-1}$. Using \eqref{IH}, \eqref{Knp2FINAL}, and \eqref{Gnp2} within Lemma \ref{DF0} with $k=n+1$ gives
\begin{align*}
    \infnorm{\nabla_p^{n+1}\rndP{F^{\alpha,0}(t)-F_\infty^{\alpha,0}}} &\lesssim \int_t^\infty \bigg\{ s^{n+2}\mcK_{n+2}(s) +s^{n+1}\mcK_{n+1}(s) +\sum_{m=0}^{n} s^{m}\mcK_{m}(s) \bigg\} \,ds\\
    &\lesssim \int_t^\infty \Big\{ s^{-3}\ln^{2}(s)+s^{-3}\ln^{2}(s) + s^{-3} \Big\} \,ds\\
    &\lesssim t^{-2}\ln^{2}(t).
\end{align*}
We now obtain the convergence estimate for $(n+1)$st order field derivatives and update the estimate of $\mcK_{n+1}(t)$ by invoking Lemma \ref{Kconv} with $k=n+1$ and $\ell=0$ and using \eqref{IH}, \eqref{DkF0}, and \eqref{Gnp2} so that
\begin{align*}
    \sup_{x\in\R^3}\tildes{t^{n+3}\nabla_x^{n+1}E(t,x)-\nabla_q^{n+1}E_{0,\infty} \left (\frac{x}{t} \right)} &\lesssim t^{-n-2}\bigg(\sum_{j=0}^{n+1}t^j\mcK_j\rndP{t(1-|u|)}\bigg)\mcG_p^{n+1}(t)\\
    &\qquad +\sum_{j=0}^{n+1}\sup_{|u| \leq \mcU}\infnorm{\nabla_p^j \rndP{F^{\alpha,0}(t(1-|u|))-F^{\alpha,0}_\infty}} +t^{-1}\mcG_p^{n+2}(t)\\
    &\lesssim t^{-n-2}\Big(t^{n+1}t^{-n-4}\ln^{2}(t)+\sum_{j=0}^{n}t^jt^{-n-3}\Big)\\
    &\qquad + \left ( t^{-2}\ln^{2}(t) +t^{-2}\ln^{2}(t) + \sum_{j=0}^{n-1}t^{j-n-1} \right )+t^{-1}\\
    &\lesssim t^{-1}.
\end{align*}
As the convergence estimate for $B$ is the same, we have 
\begin{equation}
\label{Knp1FINAL}
\sup_{x\in\R^3} \left| t^{n+3} \nabla_x^{n+1}(E,B)(t,x)- \nabla_q^{n+1}(E,B)_{0,\infty}\rndP{\frac{x}{t}} \right| \lesssim t^{-1}
\end{equation}
as in \eqref{eq:Qn}
and 
\begin{equation}
\label{Knp1}
\mcK_{n+1}(t)\lesssim t^{-n-4}.
\end{equation} 

\vspace{0.1in}

\textbf{Step 3:}  Convergence estimates for $j$th field derivatives for all $j=0,...,n$\\

To establish the convergence of these field derivatives, we take an iterative approach and do so in descending order. 
In particular, defining \eqref{eq:Rk} for a fixed $j=0,..., n$ to consist of the estimates
    \begin{equation} 
    \tag{$\mathcal{R}_j$} 
    \label{eq:Rk}
    \left \{
            \begin{aligned}
                \infnorm{\nabla^j_q\rndP{F^{\alpha,0}(t)-F^{\alpha,0}_\infty}} &\lesssim t^{j-n-2}\\
                \max_{0< |\delta|\leq n+1-j}\infnorm{\nabla_p^{j+1}\rndP{F^{\alpha,\delta}(t)-F_\infty^{\alpha,\delta}}} &\lesssim t^{j-n-1}\\
                \infnorm{\nabla_q^j W_{\ell}^E(t)},\infnorm{\nabla_q^j W_{\ell}^B(t)} &\lesssim t^{j+\ell-n-2} \quad \forall \ell\in\{0,...,n-j\}\\
       \sup_{x\in\bfR^3} \left | t^{n+3} \nabla_x^j(E,B)(t,x) - \nabla^j_q(E,B)_{n+1-j, \infty} \left (\frac{x}{t}\right ) \right | & \lesssim t^{-1},\\
            \end{aligned}
            \right.
        \end{equation}
we will first prove that \eqref{eq:Rk} holds with $j=n$ and then show that for every $j \in\{0,\dots,n-1\}$, if $(\mathcal{R}_i)$ is true for every $i\in\{j+1, \dots, n\}$, then \eqref{eq:Rk} holds.
With this, we iteratively establish \eqref{eq:Rk} for every $j \in\{0,\dots, n\}$, which provides the respective field and field derivative convergence estimates stated within \eqref{eq:Qn}.
Throughout, we make use of the uniform bounds $\mcG_p^{n+2}(t) \lesssim 1$ to simplify our applications of Lemmas \ref{DF0} and \ref{DFell}.

We first show that the estimates within \eqref{eq:Rk} hold for $j=n$. 
Applying Lemma \ref{DF0} with $k=n$ and using \eqref{Knp1} yields
\begin{equation}
\label{DpFn0}
\begin{gathered}
    \infnorm{\nabla_p^{n}\rndP{F^{\alpha,0}(t)-F_\infty^{\alpha,0}}}  \lesssim \int_t^\infty \left\{\sum_{m=0}^{n} s^m\mcK_{m}(s) + s^{n+1}\mcK_{n+1}(s) \right\}\,ds\\
    \lesssim \int_t^\infty \left\{ s^ns^{-n-3} + s^{n+1}s^{-n-4} \right\}\,ds
    \lesssim t^{-2}.
\end{gathered}
\end{equation}
Then using Lemma \ref{DFell} with $k=n+1$ and $| \delta | =1$ along with \eqref{IH}, \eqref{Knp2FINAL}, and \eqref{Knp1}, we find
\begin{align*}
    \infnorm{\nabla_p^{n+1}\rndP{F^{\alpha,\delta}(t)-F_\infty^{\alpha,\delta}}} 
    &\lesssim \int_t^\infty\!\! \bigg\{s^{n+2}\mcK_{n+2}(s)+\sum_{m=0}^{n} s^{m}(s\!+\!1)\mcK_{m}(s)+s^{n+1}(s\!+\!2)\mcK_{n+1}(s)\bigg\}ds\\
    &\lesssim \int_t^\infty \!\! \Big\{s^{n+2}s^{-n-5}\ln^{2}(s)+ s^{n}(s\!+\!1) s^{-n-3}  +s^{n+1}(s\!+\!2) s^{-n-4} \Big\} ds\\
    &\lesssim t^{-1}.
\end{align*}
Similarly, we have for any $i=0, ..., n$ with $|\delta| \in \mathbb{N}$
\begin{equation}
\label{DkFd}
\infnorm{\nabla_p^{i}\rndP{F^{\alpha,\delta}(t)-F_\infty^{\alpha,\delta}}} \lesssim \int_t^\infty \left ( \sum_{m=0}^{i} s^{m+1}\mcK_{m}(s) + s^{i+1}\mcK_{i+1}(s) + s^{-n-2} \right ) ds \lesssim t^{i-n-1}
\end{equation}
by using Lemma \ref{DFell} with $k=i$.
Moreover, using \eqref{DkF0} and \eqref{DpFn0} within Lemma \ref{lem:rhojderiv-l} with $k=n$ and $\ell=0$ gives 
\begin{align*}
    \infnorm{\nabla_q^n \rndP{W_0^E(t)-W_{0,\infty}^E}} &\lesssim \sum_{j=0}^n \sup_{|u|\leq \mathcal{U}} \infnorm{\nabla_p^j \rndP{F^{\alpha,0}\rndP{t(1-|u|)}-F^{\alpha,0}_\infty}} \\
    &\lesssim \sum_{j=0}^{n-1}  t^{j-n-1}+t^{-2} \lesssim t^{-2} .
\end{align*}
As $W_{0,\infty}^E\equiv W_{0,\infty}^B\equiv 0$ and the estimate of $\nabla_q^n W^B_0(t)$ is identical, we find 
$$\infnorm{\nabla_q^n W_0^E(t)},\infnorm{\nabla_q^n W_0^B(t)}\lesssim t^{-2}.$$
Finally, using each of these within Lemma \ref{Kconv} with $k=n$ and $\ell=1$ we find
\begin{align*}
       \sup_{x\in\bfR^3} \left | t^{n+3} \nabla_x^nE(t,x) - \nabla^n_qE_{1, \infty} \left (\frac{x}{t}\right ) \right |
      & \lesssim t \infnorm{\nabla_q^n W_0^E(t)}+t^{-n-1}\bigg(\sum_{j=0}^{n}t^j\mcK_j\rndP{t(1-|u|)}\bigg)\mcG_p^{n}(t)\\ &\qquad+\sum_{j=0}^{n+1}\max_{|\delta|=1}\infnorm{\nabla_p^j \rndP{F^{\alpha,\delta}(t(1-|u|))-F^{\alpha,\delta}_\infty}} +t^{-1}\mcG_p^{n+2}(t)\\
    &\lesssim t^{-1}+t^{-n-1}\bigg(\sum_{j=0}^{n}t^j t^{-n-3}\bigg)+t^{-1} +\sum_{j=0}^{n}t^{j-n-1}+t^{-1}\\
    &\lesssim t^{-1}.
\end{align*}
As the estimate for $\nabla_x^nB(t,x)$ is the same, we have established $(\mathcal{R}_n)$.
Moreover, as $(E,B)_{n+1-k, \infty} \equiv 0$, this last estimate implies $\mcK_n(t)\lesssim t^{-n-4}$.\\

Next, we let $0 \leq j \leq n-1$ be given and assume $(\mathcal{R}_i)$ is true for all $i=j+1,...,n$ in order to show \eqref{eq:Rk}.
Because the field limits vanish, the estimates within  $(\mathcal{R}_i)$ collapse to provide
$$\mcK_i(t) \lesssim t^{-n-4}$$
for every $i = j+1,...,n$.
We begin by using this and \eqref{IH} and applying Lemma \ref{DF0} with $k=j$ to find
\begin{align*}
    \infnorm{\nabla_p^{j}\rndP{F^{\alpha,0}(t)-F_\infty^{\alpha,0}}}
    &\lesssim \int_t^\infty \left (\sum_{m=0}^{j} s^m\mcK_{m}(s) +s^{j+1}\mcK_{j+1}(s) \right )\,ds\\
    &\lesssim \int_t^\infty \bigg\{\sum_{m=0}^{j}s^ms^{-n-3} +s^{j+1}s^{-n-4}\bigg\}\,ds\\
    &\lesssim t^{j-n-2}.
\end{align*}
Next, we apply Lemma \ref{DFell} with $k=j+1$ taking $0 < |\delta| \leq n+1-j$ so that
\begin{align*}
    \infnorm{\nabla_p^{j+1}\rndP{F^{\alpha,\delta}(t)-F_\infty^{\alpha,\delta}}}  &\lesssim \int_t^\infty \left\{ \sum_{m=0}^{j} s^{m}(s+1)\mcK_{m}(s)+ s^{j+1}(s+2)\mcK_{j+1}(s)+s^{j+2}\mcK_{j+2}(s)\right\} ds\\
    &\lesssim \int_t^\infty \left\{ s^{j}(s+1)s^{-n-3} + s^{j+1}(s+2)s^{-n-4}+s^{j+2}s^{-n-4}\right\}\, ds\\
    &\lesssim t^{j-n-1}.
\end{align*}
To estimate the field kernels, we use Lemma \ref{lem:rhojderiv-l} with $k=j$ and $\ell=0,...,n-j$ while splitting into two cases, namely $\ell=0$ and the remaining $\ell = 1, ..., n-j$, as only the latter depends upon the $|\delta| \neq 0 $ moments of the spatial average.
In the $\ell=0$ case we use \eqref{DkF0} and the above estimate on $j$th order derivatives so that
\begin{align*}
    \infnorm{\nabla_q^{j} \rndP{W_0^E(t)-W_{0,\infty}^E}} &\lesssim \sum_{m=0}^{j} \max_{\alpha=1,...,N}\sup_{|u|\leq \mathcal{U}} \infnorm{\nabla_p^m \rndP{F^{\alpha,0}\rndP{t(1-|u|)}-F^{\alpha,0}_\infty}} \\
    &\lesssim \sum_{m=0}^{j-1} t^{m-n-1} + t^{j-n-2}
     \lesssim t^{j-n-2}.
\end{align*}     
Taking $1\leq \ell\leq n-j$ and splitting the sum at $j+1$, we use \eqref{DkFd} for the $m =0,...,j$ terms and $(\mathcal{R}_i)$ for $i=j+1,...,n$ applied to the $m=j+1$, ..., $j+\ell$ terms in order to find
\begin{align*}
    \infnorm{\nabla_q^{j} \rndP{W_\ell^E(t)-W_{\ell,\infty}^E}}
    &\lesssim \sum_{m=0}^j \max_{\substack{\alpha=1,...,N\\ 0 <|\delta|\leq n+1-j}} \sup_{|u|\leq \mathcal{U}} \infnorm{\nabla_p^m \rndP{F^{\alpha,\delta}\rndP{t(1-|u|)}-F^{\alpha,\delta}_\infty}} \\
    &\qquad + \sum_{m=j+1}^{j+\ell} \max_{\substack{\alpha=1,...,N\\  0 <|\delta| \leq n+1-j}} \sup_{|u|\leq \mathcal{U}} \infnorm{\nabla_p^m \rndP{F^{\alpha,\delta}\rndP{t(1-|u|)}-F^{\alpha,\delta}_\infty}}\\
    &\lesssim \sum_{m=0}^{j} t^{m-n-1} + \sum_{m=j+1}^{j+\ell} t^{m-n-2}
    \lesssim t^{j+\ell-n-2}
\end{align*}
because $\ell \geq 1$.
As $W_{\ell,\infty}^E\equiv W_{\ell,\infty}^B\equiv 0$ and the decay rates for $\nabla_q^{j} W_{\ell}^B(t)$ are identical, this and the above estimate on $W_0^E$ implies
   $$\infnorm{\nabla_q^{j} W_{\ell}^E(t)}, \infnorm{\nabla_q^{j} W_{\ell}^B(t)}  \lesssim t^{j+\ell-n-2}$$
   for every $\ell = 0,...,n-j$.

Finally, we invoke Lemma \ref{Kconv} with $k=j$ and $\ell=n-j+1$ (so that $k+\ell = n+1$) and use \eqref{DkFd} and  $(\mathcal{R}_i)$ for $i=j+1,...,n$ to obtain convergence estimates for field derivatives, namely
\begin{align*}
    & \sup_{x\in\R^3}\tildes{t^{n+3}\nabla_x^{j}E(t,x) - \nabla_q^{j} E_{n-j+1,\infty} \left ( \frac{x}{t} \right )}\\
     &\lesssim \!\sum_{m=0}^{n-j}t^{n-j+1-m} \infnorm{\nabla_q^{j} W_{m}^E(t)}+t^{-1-j}\bigg(\sum_{m=0}^{j}t^m\sup_{|u|\leq \mathcal{U}}\mcK_j\rndP{t(1\!-\!|u|)}\bigg)\mcG_p^{j}(t)\\ &\qquad+\sum_{m=0}^{n+1}\max_{|\delta|=n+1-j}\sup_{|u| \leq \mcU} \infnorm{\nabla_p^m \rndP{F^{\alpha,\delta}(t(1-|u|))-F^{\alpha,\delta}_\infty}} +t^{-1}\mcG_p^{n+2}(t)\\
    &\lesssim \sum_{m=0}^{n-j}t^{n-j+1-m} t^{j+m-n-2}+t^{-1-j}\bigg(\sum_{m=0}^{j}t^mt^{-n-3}\bigg)+\sum_{m=0}^{j}t^{m-n-1}
    +\sum_{m=j+1}^{n+1}t^{m-n-2}+t^{-1}\\
                %
                %
    &\lesssim t^{-1}
\end{align*}
as $0 \leq j \leq n-1$.
Because the estimate for $\nabla_x^jB(t,x)$ is identical, we find 
\begin{equation}
\label{EBconv0n}
\sup_{x\in\bfR^3} \left | t^{n+3} \nabla_x^j(E,B)(t,x) - \nabla^j_q(E,B)_{n+1-j, \infty} \left (\frac{x}{t}\right ) \right | \lesssim t^{-1}.
\end{equation}
With this, we have established \eqref{eq:Rk}.
Moreover, as $(E,B)_{n+1-j, \infty} \equiv 0$, this last estimate implies $\mcK_j(t)\lesssim t^{-n-4}$, which continues the iterative process.
Hence, as \eqref{eq:Rk} holds for every $j = 0, ...., n$, we have established \eqref{EBconv0n} for every $j = 0, ...., n$.\\


\textbf{Step 4:}  Refine other estimates using the updated field decay rates\\

To complete the induction step, we will make use of Lemma \ref{lem:rhojderiv-l} to update the estimates of the charge and current densities and their limiting behavior.
First, recall that the previous step guarantees
\begin{equation}
\label{Kdecay}
\mcK_j(t)\lesssim t^{-n-4}
\end{equation}
for all $j = 0, ..., n+1$.
We further update the convergence rates of the $n$th spatial average using Lemma \ref{DFell} with $k=0$ and $|\delta|\in\N$, which yields
$$ \infnorm{F^{\alpha,\delta}(t)-F_\infty^{\alpha,\delta}} \lesssim \int_t^\infty \Big\{ s\mcK_1(s)+(s+1)\mcK_0(s)  \Big\}\,ds \lesssim t^{-n-2}.$$
Similarly, we update the estimates of derivatives using Lemma \ref{DFell} with $k=1,...,n$ to find
$$ \infnorm{\nabla_p^k\rndP{F^{\alpha,\delta}(t)-F_\infty^{\alpha,\delta}}}
    \lesssim \int_t^\infty \!\bigg\{\sum_{m=0}^{k} \Big(s^{m+1}\mcK_{m+1}(s)+s^m(s+1)\mcK_m(s)\Big)+(s+1)\mcK_0(s)\bigg\}ds
    \lesssim t^{k-n-2}$$
for any $\alpha=1,...,N$ and $|\delta|\in\N$ and
\begin{align*}
\infnorm{\nabla_p^{n+1}\rndP{F^{\alpha,\delta}(t)-F_\infty^{\alpha,\delta}}}
    & \lesssim \int_t^\infty \!\bigg\{s^{n+2} \mcK_{n+2}(s) + s^{n+2}\mcK_{n+1}(s) + \sum_{m=0}^n \Big(s^{m+1}\mcK_{m+1}(s)+s^m(s+1)\mcK_m(s)\Big)\bigg\}ds\\
    & \lesssim \int_t^\infty \!\bigg\{s^{-3}\ln^{2}(s) + s^{-2}\bigg\}ds \lesssim t^{-1}
\end{align*}
using Lemma \ref{DFell} with $k=n+1$.
With these estimates complete, we apply Lemma \ref{lem:rhojderiv-l} again with $\ell = 0,...,n$ and $k=0$, which gives
$$    \sup_{x\in\R^3}\tildes{\rho_\ell(t,x)-\rho_{\ell,\infty}\rndP{\frac{x}{t}}} \lesssim \sum_{j=0}^\ell \max_{\substack{\alpha=1,...,N\\|\delta|=\ell}} \infnorm{\nabla_p^j\rndP{F^{\alpha,\delta}(t)-F_\infty^{\alpha,\delta}}}\lesssim \sum_{j=0}^\ell t^{j-n-2}
    \lesssim t^{\ell-n-2}$$ 
 and
$$ \sup_{\substack{|q|\leq \gamma\\|u|\leq \mathcal{U}}}\tildes{W_\ell^E(t,u,q)-W_{\ell,\infty}^E(u,q)} \lesssim \sum_{j=0}^\ell \max_{\substack{\alpha=1,...,N\\|\delta|=\ell}} \sup_{|u|\leq \mathcal{U}}\infnorm{\nabla_p^j\left(F^{\alpha,\delta}\big(t(1-|u|)\big)-F_\infty^{\alpha,\delta}\right)} \lesssim t^{\ell-n-2}$$
with the same estimates for $j_\ell(t,x)$ and $W_\ell^B(t,u,q)$, respectively.
As the limits vanish, namely $\rho_{\ell,\infty}\equiv j_{\ell,\infty}\equiv W^E_{\ell,\infty}\equiv W^B_{\ell,\infty}\equiv0$ for $\ell = 0, ..., n$, we find
$$ \infnorm{\rho_\ell(t)},\infnorm{j_\ell(t)}, \infnorm{W^E_\ell(t)},\infnorm{W^B_\ell(t)}\lesssim t^{\ell-n-2}$$
for all $\ell =0,...,n$.
Finally, we apply Lemma \ref{rhoconv} with $k=0$ and $\ell=n+1$ and use the previous estimates to deduce
\begin{align*}
\sup_{x\in\R^3} \left| t^{n+4}\rho(t,x)-\rho_{n+1,\infty}\rndP{\frac{x}{t}} \right| &\lesssim 
\sum_{m=0}^{n}  t^{n- m+1}\left \|  \rho_m(t) \right \|_\infty +  \sum_{j=0}^{n+1}\max_{\substack{\alpha = 1, ..., N \\ |\delta| = n+1}} \left\| \nabla_p^j \biggl (F^{\alpha, \delta}(t)  - F^{\alpha, \delta}_\infty \biggr ) \right\|_\infty + t^{-1} \mcG_p^{n+2}(t)\\
& \lesssim t^{-1}
\end{align*}
and 
$$\sup_x \left| t^{n+4} j(t,x)-j_{n+1,\infty}\rndP{\frac{x}{t}} \right| \lesssim t^{-1}.$$


Collecting the results of each step, including \eqref{Gnp23}, \eqref{Knp2FINAL}, \eqref{Gnp2}, \eqref{Knp1FINAL}, \eqref{EBconv0n} and the above convergence estimates for the charge and current densities, we have obtained \eqref{eq:Qn}, which completes the induction step and the proof.

Lastly, to justify the rate of convergence of $g^\alpha(t,x,v)$ and its derivatives, we note that using the stated decay rate of the field and the uniform boundedness of derivatives of $g^\alpha$, the Vlasov equation \eqref{RVMg} yields
$$\Vert \partial_t g^\alpha (t) \Vert_\infty \lesssim t\mcK_0(t) \mcG_{x,p}^1(t) + \mcK_0(t)  \mcG_p^1(t) \lesssim t^{-n-2}.$$
Upon integrating in $t$, we find for any $\alpha = 1, ..., N$
$$\sup_{(x,p) \in \bfR^6} \left | g^\alpha \left(t,x, p \right) - f^\alpha_\infty(x,p) \right | \lesssim \int_t^\infty  \Vert \partial_t g^\alpha (s) \Vert_\infty \ ds  \lesssim t^{-n-1}.$$
Similarly, taking any $k$th-order $x$-derivative and $\ell$th-order $p$-derivative in the Vlasov equation  \eqref{RVMg} with $k,\ell \in \mathbb{N}$ and $k + \ell \leq n+1$ and taking the supremum over $(x,p)\in \mathbb{R}^6$, we find
	\begin{align*}
	\Vert \partial_t D^k_x D^\ell_p g^\alpha(t) \Vert_\infty 
	 \lesssim& \sum_{i=0}^{k-1} \sum_{j=0}^\ell t^j \mcK_{i+j}(t) \biggl ( t \mcG_{x,p}^{k+\ell-i-j+1}(t) + \mcG_{p}^{k+\ell-i-j+1}(t) \biggr )\\
	 &+ \sum_{j=0}^\ell t^j \mcK_{j+k}(t) \biggl ( t \mcG_{x,p}^{\ell-j+1}(t) + \mcG_{p}^{\ell-j+1}(t) \biggr )
	\end{align*}
where the indices $i$ and $j$ represent the number of $x$ and $p$ derivatives that are applied to the Lorentz force, respectively. 
As this holds for arbitrary derivatives, we decompose the double sum and use the previous estimates on the decay of the fields and their derivatives \eqref{Kdecay} and derivatives of the distribution function via \eqref{IH}, Lemma \ref{Dng},  and \eqref{Gnp2} to find 
	\begin{align*}
	\Vert \partial_t \nabla^k_x \nabla^\ell_p g^\alpha(t) \Vert_\infty 
	 \lesssim&
	  \mcK_0(t) \left (t \mcG_{x,p}^{k+\ell+1}(t) + \mcG_p^{k+\ell+1}(t) \right ) + \sum_{j=1}^\ell t^j (t+1) \mcK_j(t)\\
	  & \ + \sum_{i=1}^{k-1} \sum_{j=0}^\ell t^{j}(t+1)  \mcK_{i+j}(t) + \sum_{j=0}^\ell t^j(t+1)  \mcK_{j+k}(t)\\
	   \lesssim& t^{-n-3} + \sum_{j=1}^\ell t^{j-n-3} + \sum_{i=1}^{k-1} \sum_{j=0}^\ell t^{j-n-3} + \sum_{j=0}^\ell t^{j-n-3}\\
	    \lesssim&  \max\{t^{-n-3}, t^{\ell-n-3}\}.
\end{align*}
Upon integrating in $t$, we find for any $\alpha = 1, ..., N$
$$\sup_{(x,p) \in \bfR^6} \left | \nabla_x^k \nabla_p^\ell g^\alpha \left(t,x, v \right) - \nabla_x^k \nabla_p^\ell  f^\alpha_\infty(x,p) \right | \lesssim \int_t^\infty  \Vert \partial_t \nabla_x^k \nabla_p^\ell g^\alpha (s) \Vert_\infty \ ds  \lesssim \max\{t^{-n-2}, t^{\ell-n-2}\}.$$
for $k + \ell \leq n+1$.
The estimates are analogous for the boundary cases $k=0$, $1 \leq \ell \leq n+1$ and $\ell=0$, $ 1 \leq k \leq n+1$.
In particular, the uniform convergence for all derivatives of order $n+1$ and the previously known compact support of the limiting function then implies $f_\infty^\alpha \in C_c^{n+1}(\bfR^6)$, which provides the regularity needed to justify \eqref{eq:F-al-inf}, as stated in Remark \ref{Finf}.
This completes the proof of  Theorem \ref{T2}.

Next, we invoke Theorem \ref{T2} to prove Theorem \ref{T1}.
\begin{proof}[Proof of Theorem \ref{T1}]
Assume $\mcM = 0$. 
Then, for $m = 0$, we merely take $\mcA_{0,\infty} \not\equiv 0$, which implies $\rho_{0,\infty} \not \equiv 0$, and apply Theorem \ref{oldT1}, which yields the sharp bounds
$$
\begin{gathered}
\| \rho(t)\|_\infty \sim t^{-3},\\
\| (E,B)(t)\|_\infty \sim t^{-2},\\
\| \nabla_x (E,B)(t)\|_\infty \sim t^{-3}
\end{gathered}
$$
and the modified scattering result
\begin{equation}
\label{modscattering}
\sup_{(x,p) \in \bfR^6} \left | f^\alpha \left(t,x +v_\alpha(p)t - \frac{e_\alpha}{m_\alpha} \ln(t) \mathbb{A}_\alpha(p) K^\alpha_{0,\infty}(p), p \right) - f^\alpha_\infty(x,p) \right |  \lesssim t^{-1}\ln^{4}(t)
\end{equation}
for every $\alpha = 1, ..., N$ and any associated solution of \eqref{Vlasov}-\eqref{Maxwell}.

Otherwise, we let $m \geq 1$ be given, define $n = m - 1$, and take $\mcA_{\delta,\infty} \equiv 0$ for all $\delta \in \mathbb{N}_0^3$ satisfying $|\delta| = 0, ..., n$ with $\mcA_{\delta,\infty} \not\equiv 0$
for some $|\delta| = n+1$.
Then, applying Theorem \ref{T2} with $n = m-1$ gives
$$
\begin{gathered}
\| \rho(t)\|_\infty \sim t^{-m-3},\\
\| (E,B)(t)\|_\infty \sim t^{-m-2},\\
\| \nabla_x^k (E,B)(t)\|_\infty \sim t^{-m-3} \\
\end{gathered}
$$
for every $k =1, ..., m$.
Additionally, the distribution function scatters linearly so that
\begin{equation}
\label{scattering}
\sup_{(x,p) \in \bfR^6} \left | f^\alpha(t,x +v_\alpha(p)t, p) - f^\alpha_\infty(x,p) \right |  \lesssim t^{-m}
\end{equation}
for every $\alpha = 1, ..., N$ and any associated solution of \eqref{Vlasov}-\eqref{Maxwell}.

It remains to justify the existence of solutions $f^\alpha \in C^{m+9} \left((0,\infty) \times \mathbb{R}^6 \right)$ for $\alpha = 1, ..., N$ 
that satisfy $\mcA_{0,\infty} \not\equiv 0$ or, alternatively, the conditions
$\mcA_{\delta,\infty} \equiv 0$ for all $|\delta| = 0, ..., m-1$
and
$\mcA_{\delta,\infty} \not\equiv 0$
for some $|\delta| = m$.
To do this, we merely construct smooth functions with compact support whose moments, up to a desired order, must vanish.
Thus, we build well-behaved limits and utilize the scattering map constructed within \cite{BigorgneScattering} (see also \cite{Flynn} for the analogous result pertaining to the Vlasov-Poisson system) to guarantee the existence of solutions that tend to these limits as $t \to \infty$.
In particular, the limiting scattering state within \cite{BigorgneScattering} requires $f_\infty^\alpha \in C^{m+9} \left(\mathbb{R}^6 \right)$, and so we must construct a suitably smooth limiting function.
We perform this separately for $m = 0$ and $m \in \mathbb{N}$.
Throughout, we will utilize the useful identity
\begin{equation}
\label{Didentity}
\mcD_\alpha(v_\alpha^{-1}(q)) = m_\alpha^3 \left ( 1 - |q|^2\right )^{-5/2},
\end{equation}
which follows from a brief computation,
so that this quantity only depends upon $\alpha$ via the particle mass $m_\alpha$.

First, for $m=0$, we let a nonzero function $\eta \in C_c^{9}(\mathbb{R}^3)$ be given with
$$\int \eta(q) \ dq  = 0.$$
Then, we choose
$$f^\alpha_{0,\infty}(x,p) = \phi_0(x) \psi_0^\alpha(v_\alpha(p))$$
where $\phi_0 \in C_c^{9}(\mathbb{R}^3)$ is nonnegative (but nontrivial), and for every $\alpha = 1, ..., N$
the functions $\psi_0^\alpha \in C_c^{9}(\mathbb{R}^3)$ are nonnegative and
satisfy the constraint
$$ \sum_{\alpha = 1}^N e_\alpha m_\alpha^3 \psi_0^\alpha (q) =  \left ( 1 - |q|^2 \right )^{5/2}\eta(q)$$
for $|q| \leq \gamma < 1$.
With this, we have by \eqref{eq:F-al-inf}, \eqref{eq:A-l-inf}, and \eqref{Didentity}
	\begin{align*}
	\mcA_{0,\infty}(q)
	& =
	\sum_{\alpha=1}^N e_\alpha \mcD_\alpha \left ( v_\alpha^{-1}(q) \right ) F^{\alpha,0}_\infty \left (v_\alpha^{-1}(q) \right )
	= \int \sum_{\alpha = 1}^N e_\alpha \mcD_\alpha \left ( v_\alpha^{-1}(q) \right ) f^\alpha_{0,\infty}  \left (x, v_\alpha^{-1}(q) \right )  \ dx\\
	& = 	\left( \int \phi_0(x) \ dx \right) \left ( 1 - |q|^2 \right )^{-5/2}\sum_{\alpha = 1}^N e_\alpha m_\alpha^3 \psi^\alpha_{0}(q)\\
	& = \left( \int  \phi_0(x) \ dx \right) \eta(q) 
	 \not\equiv 0
	\end{align*}
and
\begin{align*}
\int \rho_{0,\infty}(q) \ dq=
	4\pi \int \mcA_{0,\infty}(q) \ dq
	= 4\pi \left( \int  \phi_0(x) \ dx \right) \left( \int \eta(q) \ dq \right)
	 = 0,
\end{align*}
thereby satisfying the global neutrality condition $\mcM_{\mathrm{net}} = 0$ imposed by \eqref{Pinfmass}. 

Next, for a given $m \in \N$, we take $p > m+10$ and define $\Phi \in C_c^{m+9}(\mathbb{R})$ to be the corresponding weighted Gegenbauer polynomial of degree $m$ (see \cite{AbramSteg}), namely
$$\Phi_m(x) = \left( 1 - x^2 \right)^{p - \frac{1}{2}} {C}^p_{m}(x) \mathbbm{1}_{[-1,1]}$$
where $C^p_k(x)$ is a $k$th order Gegenbauer polynomial satisfying the orthogonality relationship
$$\int_{-1}^1 \left( 1 - x^2 \right)^{p - \frac{1}{2}} C^p_{k}(x)  C^p_{\ell}(x) \ dx = 0$$
for all $k,\ell \in \N_0$ with $k \neq \ell$
and
$$\int_{-1}^1 \left( 1 - x^2 \right)^{p - \frac{1}{2}} \left [ C^p_{k}(x)  \right]^2 \ dx > 0$$
for any $k \in \N_0$.
Because each $C^p_{k}(x)$ is a polynomial of degree $k$, we may normalize    $\Phi_m(x)$ so that
$$ \int x^{m} \Phi_m(x) \ dx = 1,$$
and orthogonality implies
$$ \int x^k \Phi_m(x) \ dx = 0, \qquad \mathrm{for \ all \ } k = 0, ...,m-1.$$
We note that the support of $\Phi_m$ can be rescaled to be a compact set of arbitrary size, if necessary, while maintaining the moment and regularity properties.
Then, letting $\Psi \in C_c^{m+9}(\bfR)$ be any function satisfying
$$\int \Psi(x) \ dx = 1,$$
we define $\mu_m \in C_c^{m+9}(\mathbb{R}^3)$ by
$$\mu_m (x) =  \Phi_m(x_1) \Psi(x_2) \Psi(x_3). $$
%
Then, for all $\delta \in \mathbb{N}_0^3$ with $|\delta| \leq m$, this yields 
\begin{align*}	
\int x^{\delta} \mu_m(x) \ dx & =  \left( \int x_1^{\delta_1} \Phi_m(x_1) \ dx_1 \right) \left( \int x_2^{\delta_2} \Psi(x_2) \ dx_2 \right) \left( \int x_3^{\delta_3} \Psi(x_3) \ dx_3 \right)\\
& = 
\begin{cases}
1 & \mathrm{if} \ \delta = (m,0,0) \\
0 & \mathrm{else,}
\end{cases}
\end{align*}
as $\delta_1 \in \{0, ..., m-1\}$ implies that the first integral vanishes due to the orthogonality condition stated above.

With the $\mu_m(x)$ functions in place for every $m \in \mathbb{N}$, we can now define the remaining scattering limits.
For a given $m \in \N$ we choose
$$f^\alpha_{m,\infty}(x,p) = \phi_m^\alpha(x) \psi_m(v_\alpha(p))$$
for every $\alpha = 1, ..., N$
where
$\psi_m \in C_c^{m+9}(\mathbb{R}^3)$ 
is nonnegative and nontrivial,
and the collection of nonnegative spatial distributions 
$\phi_m^\alpha \in C_c^{m+9}(\mathbb{R}^3)$
satisfies the constraint
$$\sum_{\alpha = 1}^N e_\alpha m_\alpha^3 \phi_m^\alpha(x) = \mu_m(x)$$
where $m_\alpha$ is the particle mass.
Using \eqref{eq:F-al-inf}, \eqref{eq:A-l-inf}, and \eqref{Didentity}, this construction implies for all $|\delta| = 0, ..., m-1$,
\begin{align*}
\mcA_{\delta, \infty}(q) & = \sum_{\alpha=1}^N e_\alpha \mcD_\alpha \left ( v_\alpha^{-1}(q) \right ) \int (-y)^\delta f^\alpha_{m,\infty}\left (y, v_\alpha^{-1}(q) \right ) \ dy\\
& = \psi_m(q) \left ( 1 - |q|^2 \right )^{-5/2}  \left(\int (-y)^\delta \sum_{\alpha = 1}^N  e_\alpha  m_\alpha^3  \phi_m^\alpha(y) \ dy \right)\\
& = (-1)^\delta \psi_m(q) \left ( 1 - |q|^2 \right )^{-5/2}  \left(\int y^\delta \mu_m(y) dy \right) \equiv 0
\end{align*}
as the lower-order moments of $\mu_m(x)$ must vanish.
Additionally, for the final limiting density with $\delta= (m,0,0)$ we find
\begin{align*}
\mcA_{\delta, \infty}(q) 
& =  (-1)^\delta \psi_m(q) \left ( 1 - |q|^2 \right )^{-5/2}  \left(\int y_1^m \mu_m(y) dy \right)\\
& =   (-1)^\delta \psi_m(q) \left ( 1 - |q|^2\right )^{-5/2}   \not\equiv 0.
\end{align*}

We further note that the construction of each limiting state $f^\alpha_{m,\infty}$ for $\alpha = 1, ..., N$ and $m \in \N_0$ is invariant under scaling of the distribution function. Therefore, we may merely multiply each of these functions by a chosen $\epsilon > 0$, and the vanishing and non-vanishing properties of $\mcA_{\delta, \infty}$ remain. 
Because of this, we let $\epsilon_0$ be the minimum of the smallness constants within \cite{Bigorgne}, \cite{BigorgneScattering}, and  \cite{GS}, and then
and take $\epsilon < \epsilon_0$ sufficiently small so that the scattering limits and initial distributions (due to conservation of the $L^\infty$ norm of $f^\alpha$) are as small as desired.
Furthermore, the limiting radiation fields obtained within \cite{BigorgneScattering} can be taken to be arbitrarily small herein in order to guarantee smallness of the initial electromagnetic fields as well. All of this implies that the initial data launching the solutions which converge to these constructed limiting states must satisfy the required smallness conditions.
This guarantees that the electromagnetic fields $(E,B)(t,x)$ satisfy the decay conditions of Lemma \ref{lem:glassey-strauss}, as for suitably smooth solutions in \cite{GS}.
Finally, as we have constructed a family of limits $f^\alpha_{m,\infty} \in C_c^{m+9}(\mathbb{R}^6)$ for every $\alpha = 1, ..., N$ and $m \in \N_0$, an application of  \cite[Theorem 1.8]{BigorgneScattering} implies that for each $m \in \N_0$ there exists a unique smooth solution of \eqref{Vlasov}-\eqref{Maxwell}, in this case a small data solution $ f_m^\alpha \in C^{m+9} \left((0,\infty) \times \mathbb{R}^6 \right)$ for every $\alpha = 1, ..., N$, which is associated to this limit $f^\alpha_{m,\infty}(x,p)$ via \eqref{modscattering} for $m=0$ and \eqref{scattering} for $m \geq 1$, respectively. 


\end{proof}

\section{Derivatives of the translated distributions}
\label{sec:g_deriv}
In this section we prove Lemmas \ref{Dng0} and \ref{Dng}, which are combined within the following result. 

\begin{lemma}
\label{Dkg}
Let $n \in \N_0$ be given. 
If $n=0$, assume \eqref{K1K2_Base} and \eqref{K1_Base}-\eqref{K3_Base}, as well as
$$\mcG_{x,p}^1(t) + \mcG_p^1(t) \lesssim 1, \qquad \mcG_{x,p}^2(t) \lesssim 1,  \qquad \mathrm{and}  \qquad  \mcG_p^2(t) \lesssim \ln^2(t).$$
Then, we have
$\mcG_{x,p}^{3}(t) \lesssim \ln^2(t)$
and
$$\mcG_p^{3}(t)\lesssim \ln^2(t).$$
Alternatively, if $n \geq 1$, assume \eqref{IH} and \eqref{Knp2Knp3_initial}.
Then, we have
$$\mcG_p^{n+2}(t) \lesssim 1 + \int_1^t s^{n+3} \mcK_{n+2}(s) \ ds,$$
$$\mcG_{x,p}^{n+2}(t) \lesssim 1, \qquad  \mcG_{x,p}^{n+3}(t) \lesssim \ln^2(t), \qquad \mathrm{and} \qquad  \mcG_p^{n+3}(t)\lesssim \ln^2(t)$$ 
and, as a preliminary estimate from \eqref{Knp2Knp3_initial}, we have
$$\mcG_p^{n+2}(t)\lesssim \ln^2(t).$$
\end{lemma}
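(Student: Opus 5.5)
The plan is to differentiate the translated Vlasov equation \eqref{RVMg} and integrate along its characteristics. Since $g^\alpha$ is transported by $\mcV_g$, any derivative $D_x^{\beta_x}D_p^{\beta_p} g^\alpha$ with $|\beta_x|+|\beta_p| = k$ satisfies
$$\mcV_g\bigl(D_x^{\beta_x}D_p^{\beta_p} g^\alpha\bigr) = \text{commutator terms},$$
and the commutator terms have one of two forms:
$$\frac{e_\alpha}{m_\alpha}\,D^{\nu}\bigl[K^\alpha(t,x+v_\alpha(p)t,p)\bigr]\cdot\bigl(-t\A_\alpha(p)\nabla_x+\nabla_p\bigr)D^{\beta-\nu}g^\alpha \qquad \text{or} \qquad \frac{e_\alpha}{m_\alpha}\,t\,D_p^{\nu}\A_\alpha\,K^\alpha\cdot\nabla_x D^{\beta-\nu}g^\alpha,$$
with $0<|\nu|\le k$. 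The transport part preserves $L^\infty$. Therefore
$$\|D^\beta g^\alpha(t)\|_\infty \le \|D^\beta g^\alpha(0)\|_\infty+\int_0^t \|\text{RHS}(s)\|_\infty\,ds.$$
The force derivative is bounded exactly as in Lemma \ref{DF0}: a derivative with $i$ spatial and $j$ momentum derivatives satisfies
$$\bigl|D_x^iD_p^j K^\alpha(t,x+v_\alpha(p)t,p)\bigr| \lesssim \sum_{b\le j} t^b\mcK_{i+b}(t)$$
on the support, since $|x|\le\mcR$ and $|p|\le\beta$. The key structural point is bookkeeping. A term hitting $\nabla_x D^{\beta-\nu}g$ carries an extra factor $t$ but is measured by $\mcG_{x,p}$. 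A term hitting $\nabla_pD^{\beta-\nu}g$ is measured by $\mcG_p$. The highest-order term, $|\nu|=0$, is absorbed into the transport operator.

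\textbf{Mixed derivatives first.} I would first treat $\mcG_{x,p}^{k}$, taking $k=n+2$ and then $k=n+3$. For $|\beta_x|>0$, every source term either contains a spatial derivative of $K$ or hits a term that still carries at least one $x$-derivative. The worst term is $t\,\mcK_1\cdot t^{|\beta_p|}\ldots$, which leads to integrals of type $\int s^{j+1}\mcK_{j+1}(s)\,\mcG_{x,p}(s)\,ds$ and $\int s^{j}\mcK_{j+1}\mcG_p\,ds$ with $j\le k-1$. Under \eqref{IH} these integrands decay like $s^{-2}$ for $j\le n$. Only the top terms, involving $\mcK_{n+2}$ and $\mcK_{n+3}$ from \eqref{Knp2Knp3_initial}, produce $s^{-1}\ln s$, and this yields the $\ln^2(t)$ bound at level $n+3$. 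At level $n+2$ the top term is $s^{n+2}\mcK_{n+2}\lesssim s^{-2}\ln s$, which is integrable and gives $\mcG_{x,p}^{n+2}\lesssim1$. A Gronwall step handles the self-referential terms with coefficient $s\mcK_1+\mcK_0$, which is integrable.

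\textbf{Pure momentum derivatives.} Next I would treat $\mcG_p^{k}$. Now $\nabla_p^k$ applied to the $-t\A K\cdot\nabla_x g$ term produces $t\cdot t^{k}\mcK_k\,\|\nabla_x g\|$. This gives the stated term $\int_1^t s^{n+3}\mcK_{n+2}(s)\,ds$ for $k=n+2$. The remaining terms involve $s^{j+1}\mcK_{j}\,\mcG_{x,p}^{k-j+1}$ and $s^j\mcK_j\,\mcG_p^{k-j+1}$, with $\mcG_{x,p}^{n+2}$ already bounded; these are integrable by \eqref{IH}. The only exception is $s\,\mcK_0\,\mcG_{x,p}^{n+3}\lesssim s^{-n-2}\ln^2 s$, which is integrable for $n\ge1$ and gives \eqref{DngDnE}. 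Inserting $\mcK_{n+2}\lesssim s^{-n-4}\ln s$ then gives the preliminary bound $\ln^2 t$. The level $n+3$ case is handled identically, with $s^{n+4}\mcK_{n+3}\lesssim s^{-1}\ln s$ giving $\ln^2$. The case $n=0$ follows the same scheme using the assumed base-case rates; there the borderline terms $s\mcK_0\mcG^3_{x,p}$ and $s^2\mcK_1\mcG_p^2$ have size $s^{-2}\ln^4$ and remain integrable.

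\textbf{Main obstacle.} I expect the main difficulty to be the coupling between $\mcG_p$ and $\mcG_{x,p}$ at the top order: $\nabla_p^{k}$ of the $t\A K\cdot\nabla_x g$ term requires $\mcG^{k+1}_{x,p}$, one order higher. My first move is to order the estimates so that $\mcG_{x,p}^{n+3}$ is closed first. That closure uses only the quantities $\mcG_p^{\le n+3}$ multiplied by spatial field derivatives, which decay an extra power. I would then bound $\mcG^{n+3}_{x,p}$ and $\mcG^{n+3}_p$ simultaneously via Gronwall on their sum, checking that all coefficients are integrable in $s$ and that the inhomogeneities are $O(s^{-1}\ln s)$.
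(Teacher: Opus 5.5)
Your basic tools are the right ones: differentiate \eqref{RVMg}, integrate along characteristics, bound $D_x^iD_p^jK^\alpha$ by $\sum_{b\le j}t^b\mcK_{i+b}$, and apply Gr\"onwall. The gap is that you set up the coupled system incorrectly, and the closing argument you describe is circular as written.

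In your last two paragraphs you claim that $\nabla_p^k$ of $-t\A_\alpha K^\alpha\cdot\nabla_x g^\alpha$ requires $\mcG^{k+1}_{x,p}$. Accordingly, you put a term $s\mcK_0\,\mcG^{n+3}_{x,p}$ into the estimate for $\mcG^{n+2}_p$. But the only piece carrying $k+1$ derivatives of $g^\alpha$ is the one where no derivative hits the coefficient. That piece is part of $\mcV_g(\nabla_p^k g^\alpha)$ and is removed by integrating along characteristics, as you note yourself in your first paragraph.

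If that loss were real, nothing would close at any finite order. $\mcG^{n+3}_p$ would need $\mcG^{n+4}_{x,p}$, so your joint Gr\"onwall on $\mcG^{n+3}_{x,p}+\mcG^{n+3}_p$ would not be a closed inequality. Likewise, ``closing $\mcG^{n+3}_{x,p}$ first using $\mcG_p^{\le n+3}$'' presupposes the unknown $\mcG^{n+3}_p$.

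Your order-$(n+2)$ step has the converse problem: ``mixed first'' is not self-contained. For $|\beta_x|=1$, $|\beta_p|=n+1$, the $x$-derivative on $K^\alpha$ produces $\nabla_xK^\alpha\cdot\nabla_pD_p^{\beta_p}g^\alpha$. So the bound for $\mcG^{n+2}_{x,p}$ contains $\mcK_1\,\mfG^{n+2,n+2}$, while the bound for $\mcG^{n+2}_p$ contains $s(\mcK_0+s\mcK_1)\,\mfG^{n+2,n+1}$. Neither family can be closed before the other.

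The correct structure is the one in \eqref{eq:g_(n+2)_0}--\eqref{eq:g_(n+2)_(n+2)} and \eqref{eq:Gnp20_primary}--\eqref{eq:Gnp3_primary}. At fixed total order $k$, the bound for $\mfG^{k,\ell}$ involves only:
\begin{itemize}
\item $\mfG^{k,\ell-1}$, with coefficient $s(\mcK_0+s\mcK_1)$;
\item $\mfG^{k,\ell}$, with coefficient $\mcK_0+2s\mcK_1$;
\item $\mfG^{k,\ell+1}$, with coefficient $\mcK_1$;
\item sources with at most $k-1$ derivatives on $g^\alpha$.
\end{itemize}
Nothing of order $k+1$ appears.

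With this corrected, a Gr\"onwall on the unweighted sum $\sum_\ell\mfG^{k,\ell}$ does work. Under the assumed rates, $s\mcK_0+s^2\mcK_1\lesssim s^{-2}\ln^2 s$ for $n=0$ and $\lesssim s^{-n-1}$ for $n\ge 1$, so every coefficient is integrable. The only non-integrable inhomogeneities are $s^{n+3}\mcK_{n+2}$ at order $n+2$, and $s^{n+3}\mcK_{n+2}$, $s^{n+4}\mcK_{n+3}$ at order $n+3$; each is $\lesssim s^{-1}\ln s$. This gives \eqref{DngDnE}, the preliminary bound $\mcG^{n+2}_p\lesssim\ln^2(t)$, and both order-$(n+3)$ bounds. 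Rerunning Gr\"onwall on $\sum_{\ell\le n+1}\mfG^{n+2,\ell}$, with $\mcK_1\mfG^{n+2,n+2}\lesssim s^{-n-3}\ln^2 s$ treated as a source, then gives $\mcG^{n+2}_{x,p}\lesssim 1$.

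The paper instead applies Gr\"onwall to the weighted quantity $\mfG^k=1+\sum_j t^{-j}\mfG^{k,j}$, whose coefficients only need $\mcK_0+s\mcK_1\in L^1$ (a device carried over from the Vlasov--Poisson argument). It obtains $\mfG^{k,\ell}\lesssim t^\ell$ and then removes the weights by iterating upward in $\ell$. Since the fields here decay fast enough, the repaired version of your argument reaches the same conclusions more directly.
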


We introduce some notation to simplify the presentation. For each $k\in\N$ and  $j = 0, ..., k$, we denote
	\[
	\mfG^{k,j}(t)
	:=
	\max_{\alpha = 1, ..., N} \sup_{\substack{|\beta_x|+|\beta_p|=k\\|\beta_p|=j}} \|D_p^{\beta_p} D_x^{\beta_x}g^\alpha(t)\|_\infty
	\]
and
	\[
	\mfG^k(t) =1+ \sum_{j=0}^kt^{-j}\mfG^{k,j}(t).
	\]
We also recall the following notation introduced earlier in the paper:
	\[
	\mcG^k_p(t) = 1 + \max_{\alpha = 1, ..., N} \sum_{|\beta_p|\leq k} \| D_p^{\beta_p} g^\alpha(t) \|_\infty
	\]
and
	\[\mcG^k_{x,p}(t) =  1+ \max_{\alpha = 1, ..., N} \sum_{\substack{|{\beta_p}| + |{\beta_x}|\leq k \\ |\beta_x| > 0}} \| D_p^{\beta_p} D_x^{\beta_x}  g^\alpha(t) \|_\infty.
	\]

\subsection*{Sketch of the proof.}
Before proceeding with the proof, we first provide a brief sketch, as the proof itself is quite technical in some places. The strategy of the proof proceeds as follows.

We wish to understand the asymptotic properties of $D_p^{\beta_p} D_x^{\beta_x} g^\alpha$ with $|\beta_x+\beta_p| = n+m$, where $n=0$ in the base case and $n\geq1$ in the induction step, and $m=2$ or $3$. To do this, we apply these derivatives to the Vlasov equation $\mcV_g g^\alpha = 0$, where the translated Vlasov operator was introduced in \eqref{eq:vlasov-op-g}. The idea is to get an expression that one can bound, and then integrate along characteristics in order to eliminate the operator $\mcV_g$.  This will allow us to construct a bound on the sum of all $n+m$ order derivatives in $x$ and $p$ of $g^\alpha$ to show \[\mfG^{n+m}(t)\lesssim 1\] following an application of Gr\"onwall's inequality.

The uniform bound on $\mfG^{n+m}(t)$ will then imply \[\mfG^{n+m,\ell}(t) \lesssim t^\ell,\qquad\forall\ell=0,\hdots,n+m.\] This polynomial growth will be iteratively improved by applying a Gr\"onwall argument until each of these quantities is uniformly bounded for all $\ell=1,\hdots,n$. The next terms, $\mfG^{n+m,\ell}(t)$,  $\ell=n+1,\dots,n+m$, require further iteration in order to improve on the initial polynomial bounds.

We only consider $|\beta_p+\beta_x|=n+m$ for $m=2$ or $3$  because we already have
 	\begin{equation}
	\label{eq:g-derivs-1}
	\mfG^{k,j}(t) \lesssim \mcG^k_{x,p}(t) \lesssim 1
	\end{equation}
for $j<k \leq n+1$, and
  	\begin{equation}
	\label{eq:g-derivs-2}
	\mfG^{k,k}(t) \lesssim \mcG^k_{p}(t) \lesssim 1
	\end{equation}
 for $j = k\leq n+1$. These are implied by \eqref{IH} for $n \geq 1$, and by assumption for $n=0$.
Using \eqref{K1K2_Base}, \eqref{K1_Base}-\eqref{K3_Base}, \eqref{IH}, and \eqref{Knp2Knp3_initial}, the following field decay rates are valid for all $n\in\N_0$:
\begin{equation}\label{eq:fieldRates}
    \left\{ 
    \begin{aligned}
        \mcK_j(t) & \lesssim t^{-n-3} \qquad & \forall j \in \{0, ..., n\}, & \\
		\mcK_{n+1}(t) & \lesssim t^{-n-4}\ln^2(t),\\
        \mcK_{n+2}(t)&\lesssim t^{-n-4}\ln(t)\\
        \mcK_{n+3}(t)&\lesssim t^{-n-5}\ln(t).
    \end{aligned}
    \right.
\end{equation}
 
\begin{proof}
Because this result is a consequence of the Vlasov equation only, and not the form of the force field, the proof merely follows that of Lemma 3.2 in \cite{Mattingly2025}, which establishes the analogous estimate for the Vlasov-Poisson system, though we provide a full proof for completeness. 
First, we recall the expression \eqref{eq:vlasov-op-g} of the translated Vlasov operator
	\[
	\mcV_g 
	=
	\partial_{t} + \frac{e_\alpha}{m_\alpha}  K^\alpha(t,x+v_\alpha(p)t, p) \cdot \left( - t \A_\alpha(p) \nabla_{x} + \nabla_{p} \right).
	\]
to $n+m$ mixed derivatives (in space and momentum) of $g^\alpha$.

On the one hand, one can integrate along characteristics to eliminate $\mcV_g$, while on the other hand, one can compute the commutator between $\mcV_g$ and the derivatives.  More precisely, we take $\beta_x$ derivatives in $x$ and $\beta_p$ derivatives in $p$, respectively, within the Vlasov equation where $\beta_x,\beta_p$ are multi-indices satisfying $|\beta_x+\beta_p|=n+m$ throughout, then
$$	
\mcV_g\big(D_p^{\beta_p} D_x^{\beta_x} g^\alpha\big)
	=
	\sum_{j=0}^{n+m-1}\left(\!\begin{array}{c}n+m\\j\end{array}\!\right) \sum_{\substack{|\gamma_x+\gamma_p|=j \\ \gamma_x \preceq\beta_x, \gamma_p \preceq\beta_p}}(D_p^{\beta_p-\gamma_p} D_x^{\beta_x-\gamma_x}\mcV_g) \left( D_p^{\gamma_p} D_x^{\gamma_x}g^\alpha \right)
$$
and note that the term with all $|\gamma_p+\gamma_x|=n+m$ derivatives applied to $g^\alpha$ vanishes 
{because the pure Vlasov operator applied to $g^\alpha$ is zero.} 
We invert the operator $\mcV_g$ (which later amounts to integration along the characteristics), writing
	\begin{equation}
	\label{eq:k-derivs-g-2}
	D_p^{\beta_p} D_x^{\beta_x} g^\alpha
	=
	\sum_{j=0}^{n+m-1}\left(\!\begin{array}{c}n+m\\j\end{array}\!\right) \sum_{\substack{|\gamma_x+\gamma_p|=j \\ \gamma_x \preceq\beta_x, \gamma_p \preceq\beta_p}}(\mcV_g)^{-1}\left[(D_p^{\beta_p-\gamma_p} D_x^{\beta_x-\gamma_x}\mcV_g) \left( D_p^{\gamma_p} D_x^{\gamma_x}g^\alpha \right)\right].
	\end{equation}

The next step is to estimate the terms on the right hand side of the form
	\[(D_p^{\beta_p-\gamma_p} D_x^{\beta_x-\gamma_x}\mcV_g) \left( D_p^{\gamma_p} D_x^{\gamma_x}g^\alpha \right),\]
and, in particular, the coefficients appearing within the operator $D_v^{\beta_p-\gamma_p} D_x^{\beta_x-\gamma_x}\mcV_g$.

When considering the differences from the Vlasov-Poisson case, in this Vlasov-Maxwell system, the Vlasov operator contains the Lorentz force, 
	\[
	K^\alpha(t,x,p) = E(t,x) + v_\alpha(p) \times B(t,x),
	\]
which depends upon momentum, rather than simply the electrostatic force $E(t,x)$, which does not. With this in mind, it is a straightforward observation that spatial derivatives satisfy
	\[
	\left | \nabla^j_x K^\alpha(t,x+v_\alpha(p)t, p) \right | \lesssim \mcK_j(t),
	\]
while momentum derivatives satisfy
	\[
	\left | \nabla^j_p  \biggl (  K^\alpha(t,x+v_\alpha(p)t, p)  \biggr ) \right | \lesssim \sum_{i=0}^j t^i \mcK_i(t),
	\]
because of lower-order terms arising from derivatives being applied to $v_\alpha(p)$ and all of its derivatives (recall that $v_\alpha(p)$ and all of its derivatives are uniformly bounded for $|p| \leq \beta$). These estimates imply that
	\begin{align*}
	\left|D_p^{\beta_p-\gamma_p} D_x^{\beta_x-\gamma_x}\mcV_g\right|
	&=
	\left|D_p^{\beta_p-\gamma_p} D_x^{\beta_x-\gamma_x}\left( \frac{e_\alpha}{m_\alpha}  K^\alpha(t,x+v_\alpha(p)t, p) \cdot \left( - t \A_\alpha(p) \nabla_{x} + \nabla_{p} \right)\right)\right|\\
	&\lesssim
	\sum_{i=0}^{|\beta_p-\gamma_p|}t^i\mcK_{i+|\beta_x-\gamma_x|}(t)\left(t\nabla_x+\nabla_p\right).
	\end{align*}
Consequently, we have upon denoting $j=|\gamma_x+\gamma_p|$
\begin{align*}
	\big|(D_p^{\beta_p-\gamma_p} D_x^{\beta_x-\gamma_x}\mcV_g)& \left( D_p^{\gamma_p} D_x^{\gamma_x}g^\alpha \right)\big|
	\lesssim
	\sum_{i=0}^{|\beta_p-\gamma_p|}\big|t^i\mcK_{i+|\beta_x-\gamma_x|}(t)\left(t\nabla_x+\nabla_p\right)\left( D_p^{\gamma_p} D_x^{\gamma_x}g^\alpha \right)\big|\\
	&\lesssim
	\sum_{i=0}^{|\beta_p-\gamma_p|}t^i\mcK_{i+|\beta_x-\gamma_x|}(t)\left(t\mfG^{j+1,|\gamma_p|}(t)+\mfG^{j+1,|\gamma_p|+1}(t)\right).
\end{align*}
Returning to \eqref{eq:k-derivs-g-2} and using this, we have the following bound for derivatives of $g^\alpha$:
\begin{equation}
	\label{DngOrig}
	\|D_p^{\beta_p} D_x^{\beta_x} g^\alpha(t)\|_\infty
	\lesssim
	1+\int_1^t\sum_{j=0}^{n+m-1} \!\!\! \sum_{\substack{|\gamma_x+\gamma_p|=j \\ \gamma_x \preceq\beta_x, \gamma_p \preceq\beta_p}} \!\!\! \sum_{i=0}^{|\beta_p-\gamma_p|}s^i\mcK_{i+|\beta_x-\gamma_x|}(s)\left(s\mfG^{j+1,|\gamma_p|}(s)+\mfG^{j+1,|\gamma_p|+1}(s)\right)\, ds.
\end{equation}

\subsection*{Estimates of $(n+2)$nd order derivatives of $g^\alpha$.}

Letting $m=2$ in Equation \eqref{DngOrig} above results in the initial estimate for $|\beta_p+\beta_x|=n+2$:
\begin{align}
\label{Dng_np2}
    \|D_p^{\beta_p} D_x^{\beta_x} g^\alpha(t)\|_\infty
	& \lesssim 1 + \!\int_1^t \sum_{j=0}^{n+1}\sum_{\substack{|\gamma_x+\gamma_p|=j \\ \gamma_x \preceq\beta_x, \gamma_p \preceq\beta_p}} \sum_{i=0}^{|\beta_p-\gamma_p|}\! s^i\mcK_{i+|\beta_x-\gamma_x|}(s)\!\left(s\mfG^{j+1, |\gamma_p|}(s) +\mfG^{j+1,|\gamma_p|+1}(s)\right)ds.
\end{align}
We will take the supremum over \eqref{Dng_np2} for each $|\beta_p|=\ell\in\{0,...,n+2\}$ to construct estimates of every $\mfG^{n+2,\ell}(t)$. 
\\

\textbf{Step 1:} Initial estimates of $\mfG^{n+2,\ell}(t)$ for each $\ell=0,...,n+2$

First, we set aside the integral and separate the terms within the sum over $j$ on the right side of this inequality as follows $\sum_{j=0}^{n+1}=\sum_{j=0}^{0}+\sum_{j=1}^{1}+\sum_{j=2}^{n}+\sum_{j=n+1}^{n+1}$ denoting these four terms by $I + II + III + IV$. Observe that for $n=0$ we only have the terms $I$ and $IV$.

\textbf{Term $I$ ($j=0$).}
Using \eqref{eq:g-derivs-1} and \eqref{eq:g-derivs-2} to bound derivatives of $g^\alpha$ and \eqref{eq:fieldRates} to bound the field derivatives of order $k=0,...,n+1$, the term $I$ has the straightforward estimate
\begin{equation}
	\label{DngI}
	I \lesssim \sum_{i=0}^{|\beta_p|}s^i\mcK_{i+|\beta_x|}(s)\left(s\mcG_{x,p}^1(s)+\mcG_{p}^{1}(s)\right) \lesssim \sum_{i=0}^{|\beta_p|}s^{i+1}\mcK_{i+|\beta_x|}(s)
	\lesssim s^{-2}\ln^2(s) + s^{1+|\beta_p|}\mcK_{n+2}(s)
\end{equation}
as $j=0$ implies $|\gamma_x| = |\gamma_p| = 0$.

\textbf{Term $II$ ($j=1$ with $n\geq 1$).}  As $|\beta_p- \gamma_p|+|\beta_x-\gamma_x|=|\beta_p+\beta_x|-|\gamma_p+\gamma_x|=n+1$, this term includes derivatives of the fields of all orders up to order $n+1$. 
Also, with $|\gamma_p|\leq 1$, applying \eqref{eq:g-derivs-1} and \eqref{eq:g-derivs-2} and $n\geq 1$, we find
\[ 
    s\mfG^{2,|\gamma_p|}(s)	+\mfG^{2,|\gamma_p|+1}(s) \lesssim s  .
\]
Thus we have
\begin{align*}
	II & =
	\sum_{\substack{|\gamma_x+\gamma_p| = 1 \\ \gamma_x \preceq\beta_x, \gamma_p \preceq\beta_p}} \sum_{i=0}^{|\beta_p-\gamma_p|}s^i \mcK_{i+|\beta_x-\gamma_x|}(s) \left( s\mfG^{2,|\gamma_p|}(s)	+\mfG^{2,|\gamma_p|+1}(s) \right)\lesssim
	\sum_{i=0}^{n} s^{i+1}\mcK_i(s) +s^{n+2}\mcK_{n+1}(s).
\end{align*}
And using the field estimates in \eqref{eq:fieldRates}, we find 
\[ 
    II\lesssim \sum_{i=0}^{n} s^{i+1} s^{-n-3} +s^{n+2}s^{-n-4}\ln^{2}(s)	\lesssim s^{-2}\ln^{2}(s) 
\] 
and II is integrable.

\textbf{Term $III$ ($j=2,...,n$ with $n\geq 2$).} We first notice that $|\beta_p-\gamma_p|+|\beta_x-\gamma_x|=|\beta_p+\beta_x|-|\gamma_p+\gamma_x|=n+2-j$ and from both \eqref{eq:g-derivs-1} and \eqref{eq:g-derivs-2}, we know that the $g^\alpha$ derivatives up to order $n+1$ are uniformly bounded. We also find field derivatives up to order $n$, therefore we have the bound 
\begin{align*}
    III & =
	\sum_{j=2}^{n} \sum_{\substack{|\gamma_x+\gamma_p|=j \\ \gamma_x \preceq\beta_x, \gamma_p \preceq\beta_p}}\sum_{i=0}^{|\beta_p-\gamma_p|}s^i\mcK_{i+|\beta_x-\gamma_x|}(s)\left(s\mfG^{j+1,|\gamma_p|}(s)+\mfG^{j+1,|\gamma_p|+1}(s)\right)
    \lesssim \sum_{j=2}^n s^{n+3-j}\mcK_{n+2-j}(s).
    %
\end{align*}
Applying the field decay estimates in \eqref{eq:fieldRates}, we find
\begin{equation}
\label{DngIII}
III  \lesssim \sum_{j=2}^{n} s^{-j}\lesssim s^{-2}
\end{equation}
and term $III$ is integrable.

\textbf{Term $IV$ ($j=n+1$).} 
Because  $|\beta_x+\beta_p|=n+2$, it follows that either $\gamma_x=\beta_x$ or $\gamma_p =\beta_p$, so that the expression can be simplified to
	\begin{equation}
	\begin{split}
	IV 
	=&
	 \sum_{\substack{|\gamma_x+\gamma_p|=n+1 \\ \gamma_x \preceq\beta_x, \gamma_p \preceq\beta_p}}\sum_{i=0}^{|\beta_p-\gamma_p|}s^i\mcK_{i+|\beta_x-\gamma_x|}(s)\left(s\mfG^{n+2,|\gamma_p|}(s)+\mfG^{n+2,|\gamma_p|+1}(s)\right)\\
	=&
	 \sum_{\substack{|\gamma_x+\gamma_p|=n+1 \\ \gamma_x =\beta_x, \gamma_p \prec\beta_p}}\left(\mcK_{0}(s)+s\mcK_1(s)\right)\left(s\mfG^{n+2,|\gamma_p|}(s)+\mfG^{n+2,|\gamma_p|+1}(s)\right)\\
	 &+
	 \sum_{\substack{|\gamma_x+\gamma_p|=n+1 \\ \gamma_x \prec\beta_x, \gamma_p =\beta_p}}\mcK_{1}(s)\left(s\mfG^{n+2,|\gamma_p|}(s)+\mfG^{n+2,|\gamma_p|+1}(s)\right)\\
	\end{split}
	\label{DngIV}
	\end{equation}
as $s \geq 1$.

Because there are $n+2$ derivatives of $g^\alpha$ appearing, the estimates \eqref{eq:g-derivs-1} and \eqref{eq:g-derivs-2} do not apply.\\

\textbf{Putting all four terms together.} 
Combining the estimates \eqref{DngI}-\eqref{DngIV} and inserting them into the integral within \eqref{DngOrig}, we observe that the terms  $II$ and $III$ are benign, as both are integrable over $s \in(1,\infty)$, and we are therefore left with only terms from $I$ and $IV$. This produces the following estimate, valid for $n\in\N_0$.
	\begin{equation}
	\label{Dng-estimate}
	\begin{aligned}
	\|D_p^{\beta_p} D_x^{\beta_x} g^\alpha(t)\|_\infty
	\lesssim&\
	1+\int_1^ts^{1+|\beta_p|}\mcK_{n+2}(s)\,ds\\
	 &+\int_1^t\sum_{\substack{|\gamma_x+\gamma_p|=n+1 \\ \gamma_x =\beta_x, \gamma_p \prec\beta_p}}\left(\mcK_{0}(s)+s\mcK_1(s)\right)\left(s\mfG^{n+2,|\gamma_p|}(s)+\mfG^{n+2,|\gamma_p|+1}(s)\right)ds
	 \\
	 &+
	 \int_1^t\sum_{\substack{|\gamma_x+\gamma_p|=n+1 \\ \gamma_x \prec\beta_x, \gamma_p =\beta_p}}\mcK_{1}(s)\left(s\mfG^{n+2,|\gamma_p|}(s)+\mfG^{n+2,|\gamma_p|+1}(s)\right)ds
	\end{aligned}
	\end{equation}

Within \eqref{Dng-estimate} we take the supremum over $\alpha = 1, ..., N$ and  $|\beta_x + \beta_p| = n+2$ with $|\beta_p| = j$, $j \in \{0, ..., n+2\}$. This gives us the estimates
	\begin{subequations}
	\begin{align}
	\mfG^{n+2,0}(t)
	 \lesssim&\
	 1 + \int_1^t \Big( s\mcK_{n+2}(s) 
	+ \mcK_1(s) \left( s\mfG^{n+2,0}(s)
	+ \mfG^{n+2,1}(s) \right) \Big)\, ds\label{eq:g_(n+2)_0}\\
	\mfG^{n+2,\ell}(t) \lesssim & \
	 1 + \int_1^t \Big( s^{1+\ell}\mcK_{n+2}(s)  +  s
	\left(\mcK_{0}(s)+s\mcK_1(s)\right)\mfG^{n+2,\ell-1}(s)  \label{eq:g_(n+2)_l}\\
    & \qquad\quad + \left(\mcK_{0}(s)+2s\mcK_1(s)\right)\mfG^{n+2,\ell}(s)
	+ \mcK_1(s) \mfG^{n+2,\ell+1}(s) \Big) \, ds \qquad\forall \ell\in\{1,\dots,n+1\} \nonumber\\
	\mfG^{n+2,n+2}(t) \lesssim & \ 1 + \int_1^t  \Big( s^{n+3}\mcK_{n+2}(s) + \left(\mcK_{0}(s)+s\mcK_1(s)\right) \left(s\mfG^{n+2,n+1}(s)+\mfG^{n+2,n+2}(s)\right)\Big) \, ds .\label{eq:g_(n+2)_(n+2)}
	\end{align}
	\end{subequations}

As the estimate \eqref{Dng-estimate} and thus \eqref{eq:g_(n+2)_0}-\eqref{eq:g_(n+2)_(n+2)} are valid for $n\in\N_0$, we combine the cases $n=0$ and $n\geq 1$ for the remaining steps within this subsection, as the only difference between these cases is that $\mcK_{1}(t)\lesssim t^{-4}\ln^2(t)$ when $n=0$ and $\mcK_1(t)\lesssim t^{-n-3}$ for $n\geq 1$. Because $\mcK_1(t)\lesssim t^{-4}\ln^2(t)\lesssim t^{-3}$ and the weaker bound is sufficient for our remaining arguments, we merely use
\begin{equation}\label{eq:K1estimate}
    \mcK_1(t)\lesssim t^{-n-3} 
\end{equation}
for $n\in\N_0$ in order to simplify the computations.\\

\textbf{Step 2:} Uniform bound on $\mfG^{n+2}(t)$ and a polynomial bound on $\mfG^{n+2,\ell}(t)$

Within each of the resulting inequalities we use the field estimates \eqref{eq:fieldRates} and \eqref{eq:K1estimate}.
From \eqref{eq:g_(n+2)_0}, we find
\begin{align*}
	\mfG^{n+2,0}(t)
	 &\lesssim
	 1 + \int_1^t  s\mcK_{n+2}(s)  \, ds
	+ \int_1^t 	\mcK_1(s) \left( s\mfG^{n+2,0}(s)
	+ \mfG^{n+2,1}(s) \right) \, ds\\
	 &\lesssim 	 1 	+ \int_1^t 
	s^{-n-2} \left(\mfG^{n+2,0}(s)
	+ s^{-1}\mfG^{n+2,1}(s) \right) \, ds\\
	 &\lesssim 1 + \int_1^t 
	s^{-n-2}\mfG^{n+2}(s) \, ds.
\end{align*}

Upon multiplying \eqref{eq:g_(n+2)_l} by $t^{-j}$ and using $s\leq t$, as well as \eqref{eq:fieldRates} and \eqref{eq:K1estimate}, this becomes
	\begin{align*}
	t^{-j}\mfG^{n+2,j}(t)
	 \lesssim&\
	 1 + \int_1^t  s\mcK_{n+2}(s)  \, ds+ \int_1^t s^{-j+1}
	\left(\mcK_{0}(s)+s\mcK_1(s)\right)\mfG^{n+2,j-1}(s)\, ds\\
	&+ \int_1^t s^{-j}
	\left(\mcK_{0}(s)+2s\mcK_1(s)\right)\mfG^{n+2,j}(s)\, ds
	+ \int_1^t s^{-j}
	\mcK_1(s) \mfG^{n+2,j+1}(s)  \, ds\\
	 \lesssim&\
	 1 +\int_1^t s^{-n-2}\left(s^{-(j-1)}
	\mfG^{n+2,j-1}(s)+ s^{-j}
	\mfG^{n+2,j}(s)+ s^{-(j+1)}
	 \mfG^{n+2,j+1}(s)\right)  ds\\
	 \lesssim&\
	 1 
	+ \int_1^t 
	s^{-n-2}\mfG^{n+2}(s) \, ds.
	\end{align*}
Finally, multiplying \eqref{eq:g_(n+2)_(n+2)} by $t^{-n-2}$ and performing the same operations, we have
\begin{align*}
	t^{-n-2}\mfG^{n+2,n+2}(t)
	& \lesssim
	 1 + \int_1^t  s\mcK_{n+2}(s)  \, ds
	+ \int_1^t s^{-n-2}
	\left(\mcK_{0}(s)+s\mcK_1(s)\right)\left(s\mfG^{n+2,n+1}(s)+\mfG^{n+2,n+2}(s)\right)ds\\
	& \lesssim
	 1 + \int_1^t s^{-n-2}\left(
	s^{-(n+1)}\mfG^{n+2,n+1}(s)+s^{-(n+2)}\mfG^{n+2,n+2}(s)\right)ds\\
	& \lesssim
	 1 + \int_1^t 
	s^{-n-2}\mfG^{n+2}(s) \, ds.
\end{align*}
Summing these estimates yields
\[
	\mfG^{n+2}(t) =1+	\sum_{j=0}^{n+2}t^{-j}\mfG^{n+2,j}(t)
	\lesssim 1 + \int_1^t s^{-n-2}\mfG^{n+2}(s)\, ds
\]
and a straightforward Gr\"onwall argument leads to
	\[\mfG^{n+2}(t) \lesssim 1.\]

From the definition of $\mfG^{n+2}(t)$, for $n\geq0$, the estimates for $\mfG^{n+2}(t)$ imply
\begin{equation}
\label{Dk}
\mfG^{n+2,\ell}(t) \lesssim t^\ell
\end{equation}
for $n\geq0$ and for every $\ell = 0, ..., n+2$.\\

\textbf{Step 3:} Refined estimates of  $\mfG^{n+2,\ell}(t)$ for $\ell = 1, ..., n+2$ 

We can now insert the polynomial bounds \eqref{Dk} into the bound \eqref{Dng-estimate}  for $n+2$ derivatives of $g^\alpha$. This will lead to an improvement within each of these estimates. 

We know that for $\ell=0$ the bound for $\mfG^{n+2,\ell}(t)$ is uniform in time. Thus, for $\ell \geq 1$, we set out to refine the estimates of $\mfG^{n+2,\ell}(t)$ for $\ell = 1,\dots,n+1$. We will do so recursively, by assuming $\mfG^{n+2,\ell-1}(t)\lesssim 1$ and taking $\mfG^{n+2,\ell+1}(t)\lesssim t^{\ell+1}$ from \eqref{Dk}. 

Starting from \eqref{eq:g_(n+2)_l}, we apply $\mfG^{n+2,\ell-1}(t)\lesssim 1$ and the decay rates in \eqref{eq:fieldRates} to simplify the expression. Then Gr\"onwall's inequality provides the following bound,
\begin{align*}
	\mfG^{n+2,\ell}(t)
    &\lesssim 1+\int_1^t \!\!\Big( s^{\ell+1}\mcK_{n+2}(s)+s\big(\mcK_0(s)+s\mcK_1(s)\big)+\big(\mcK_0(s)+s\mcK_1(s)\big)\mfG^{n+2,\ell}(s)+\mcK_1(s)\mfG^{n+2,\ell+1}(s)  \Big)\,ds  \\
    &\lesssim \rndP{1 + \int_1^t \mcK_1(s)\rndP{s^2+\mfG^{n+2,\ell+1}(s)}\,ds }\exp\rndP{\int_1^t s\mcK_1(s) \,ds}.
\end{align*}

Applying
\eqref{Dk}, we find for all $\ell=1,...,n$ ($n\geq 1$),
	\begin{equation*}
	\mfG^{n+2,\ell}(t) 
    \lesssim 1 + \int_1^t  s^{-n-3}\Big(s^2+\mfG^{n+2,\ell+1}(s)\Big)\, ds\lesssim 1 + \int_1^t  s^{\ell-n-2}\,ds \lesssim 1.
	\end{equation*}
Then considering the term with $\ell=n+1$, we have the initial estimate
\begin{align}\label{Dnp2np1}
    \mfG^{n+2,n+1}(t) \lesssim 
        1+ \int_1^t  \mcK_1(s)\Big(s^2+\mfG^{n+2,n+2}(s)\Big)\, ds\lesssim 1 + \int_1^t  s^{-1}\,ds\lesssim \ln(t).
\end{align}
Using \eqref{eq:K1estimate}, this holds for all $n\in \N_0$ and will be improved below.
Finally, using \eqref{Dnp2np1} within \eqref{eq:g_(n+2)_(n+2)}, we find 
\begin{align*}
    \mfG^{n+2,n+2}(t) &\lesssim 1+\int_1^t \bigg(s^{n+3}\mcK_{n+2}(s)+s\big(\mcK_0(s)+s\mcK_1(s)\big)\ln(s)+\big(\mcK_0(s)+s\mcK_1(s)\big)\mfG^{n+2,n+2}(s)\bigg)\, ds.
\end{align*}
Using the bound on $\mcK_{0}$ from \eqref{eq:fieldRates}, an application of Gr\"onwall's inequality then leads to
	\[\mfG^{n+2,n+2}(t) \lesssim
	 \left(1 + \int_1^t \Big\{s^{n+3} \mcK_{n+2}(s)+s^2\mcK_1(s)\ln(s)\Big\} \, ds \right) \exp \left( \int_1^t s\mcK_1(s) \, ds \right).  
	 \]
Then applying the decay of $\mcK_{n+2}(t)$ from \eqref{eq:fieldRates} and $\mcK_1(t)$ from \eqref{eq:K1estimate}, we find
\begin{align*}
    \mfG^{n+2,n+2}(t) \lesssim 1+\int_1^t \Big( s^{-1}\ln(s)+ s^{-n-1}\ln(s)\Big)\,ds \lesssim \ln^2(t)
\end{align*}
for all $n\in\N_0$.
Using the equivalence of the norms, we find
\begin{equation}
\label{eq:prel-G-n+2,n+2}
 \mfG^{n+2,n+2}(t) \lesssim \mcG_p^{n+2}(t) \lesssim (n+2)\mfG^{n+2,n+2}(t) \lesssim \ln^2(t) 
 \end{equation}
as a preliminary estimate, which we can then insert into \eqref{Dnp2np1}. 
Doing so yields a uniform bound on $\mfG^{n+2,n+1}(t)$, where in the case $n=0$ we invoke the stronger estimate on $\mcK_1(t)$ from \eqref{eq:fieldRates}. With this, we find
\begin{align*}
    \mfG^{n+2,n+1}(t)
    \lesssim \begin{cases} \displaystyle 1 + \int_1^t  s^{-4}\ln^2(s)\Big(s^2+\ln^2(s)\Big)\, ds\lesssim 1+\int_1^t s^{-2}\ln^2(s)\,ds\lesssim 1 & n=0\\
    \displaystyle 1 + \int_1^t  s^{-n-3}\Big(s^2+\ln^2(s)\Big)\, ds\lesssim 1+\int_1^t s^{-n-1}\,ds\lesssim 1 & n\geq 1
    \end{cases}
\end{align*}
so that $\mfG^{n+2,n+1}(t)\lesssim 1$ for all $n\in\N_0$.
Therefore we have for all $\ell=1,...,n+1$,
	\begin{equation}\label{eq:prel-G-n+2,ell}
	\mfG^{n+2,\ell}(t) \lesssim \mcG_{x,p}^{n+2}(t) \lesssim 1
	\end{equation}
and 
\[
	\mcG_p^{n+2}(t) \lesssim  1+\int_1^t s^{n+3}\mcK_{n+2}\,ds.
\]
for all $n\in\N_0$.

\subsection*{Estimates of $(n+3)$rd order derivatives of $g^\alpha$.}

Taking derivatives of order $|\beta_x + \beta_p| = n+3$ within the Vlasov equation now implies $|\gamma_x+\gamma_p|\leq n+2$ and the integral bound in \eqref{DngOrig} becomes
\begin{align}
\label{Dng_np3}
    \|D_p^{\beta_p} D_x^{\beta_x} g^\alpha(t)\|_\infty
	&\lesssim 1+\!\int_1^t \sum_{j=0}^{n+2}\sum_{\substack{|\gamma_x+\gamma_p|=j \\ \gamma_x \preceq\beta_x, \gamma_p \preceq\beta_p}}
\sum_{i=0}^{|\beta_p-\gamma_p|}\!s^i\mcK_{i+|\beta_x-\gamma_x|}(s)\!\left(s\mfG^{j+1, |\gamma_p|}(s)+\mfG^{j+1,|\gamma_p|+1}(s)\right)ds.
\end{align}
As done above, we will take the supremum over \eqref{Dng_np3} for each $|\beta_p|=\ell\in\{0,...,n+3\}$ to construct estimates of every $\mfG^{n+3,\ell}(t)$. 
\\

\textbf{Step 1:} Initial estimates of $\mfG^{n+3,\ell}(t)$ for each $\ell=0,...,n+3$

We separate the terms within the sum over $j$ in the integrand of \eqref{Dng_np3} as follows:
$$ \sum_{j=0}^{n+2} = \sum_{j=0}^{0} +\sum_{j=1}^{1}+\sum_{j=2}^{n}+\sum_{j=n+1}^{n+1}+\sum_{j=n+2}^{n+2}  = I + II + III + IV + V.$$
In the base case when $n=0$, the sum over $j=0,1,2$ is given by $I+II+V$. In the case when $n\geq 1$, we will show that terms $III$ and $IV$ integrate to $\mathcal{O}(1)$ given the decay rates in \eqref{eq:fieldRates}. 

\textbf{Term $I$ ($j=0$).} As $j=0$ implies $|\gamma_x|=|\gamma_p|=0$, we use \eqref{eq:g-derivs-1} and \eqref{eq:g-derivs-2} to deduce
\begin{align*}
    I= \sum_{i=0}^{|\beta_p|}s^{i} \mcK_{i+|\beta_x|}(s)\big(s\mfG^{1,0}(s)+\mfG^{1,1}(s)\big) \lesssim s^{-2}\ln^2(s) + s^{|\beta_p|}\mcK_{n+2}(s) + s^{|\beta_p|+1}\mcK_{n+3}(s).
\end{align*}

\textbf{Term $II$ ($j=1$).} Here $|\beta_p-\gamma_p|+|\beta_x-\gamma_x|= n+2$ and $|\gamma_p|\leq 1$. 
{From \eqref{eq:prel-G-n+2,n+2} and \eqref{eq:prel-G-n+2,ell}, we have the estimates $\mfG^{2,1}(t)\lesssim \mcG_{x,p}^2(t) \lesssim 1$, and $\mfG^{2,2}(t)\lesssim \mcG_p^2(t)\lesssim \ln^2(t)$.} Thus for any $|\gamma_p|=0,1$, we have \[s\mfG^{2,|\gamma_p|}(s)+\mfG^{2,|\gamma_p|+1}(s) \lesssim s.\]
Thus, we find
\begin{align*}
    II&=\sum_{\substack{|\gamma_x+\gamma_p|=1 \\ \gamma_x \preceq\beta_x, \gamma_p \preceq\beta_p}}\sum_{i=0}^{|\beta_p-\gamma_p|}s^{i} \mcK_{i+|\beta_x-\gamma_x|}(s)\big(s\mfG^{2,|\gamma_p|}(s)+\mfG^{2,|\gamma_p|+1}(s)\big)\\
    &\lesssim \sum_{\substack{|\gamma_p|=1 \\ \gamma_p \preceq\beta_p,\gamma_x=0}} \hspace{-0.5cm} s^{|\beta_p|} \mcK_{n+2}(s) + \sum_{\substack{|\gamma_x|=1, \\\gamma_p=0, \gamma_x \preceq\beta_x}}  \hspace{-0.5cm} s^{|\beta_p|+1} \mcK_{n+2}(s) \\
    &\lesssim s^{|\beta_p|} \mcK_{n+2}(s)\mathds{1}_{1\leq |\beta_p| \leq n+3}(s)+ s^{|\beta_p|+1} \mcK_{n+2}(s)\mathds{1}_{0\leq |\beta_p| \leq n+2}(s).
\end{align*}

\textbf{Term $III$ ($j =2,...,n$ with $n\geq 2$).} We notice that $|\beta_p-\gamma_p|+|\beta_x-\gamma_x|= n+3-j $ and apply both \eqref{eq:g-derivs-1} and  \eqref{eq:g-derivs-2} to find
\begin{align*}
    III&=\sum_{j=2}^{n} \sum_{\substack{|\gamma_x+\gamma_p|=j \\ \gamma_x \preceq\beta_x, \gamma_p \preceq\beta_p}} \sum_{i=0}^{|\beta_p-\gamma_p|} s^{i} \mcK_{i+|\beta_x-\gamma_x|}(s) \big(s\mfG^{j+1, |\gamma_p|}(s)+\mfG^{j+1,|\gamma_p|+1}(s)\big) \\ &\lesssim \sum_{j=2}^{n} s^{n+4-j}\mcK_{n+3-j}(s)
    \lesssim s^{n+2}\mcK_{n+1}(s)+\sum_{i=3}^{n} s^{i+1}\mcK_{i}(s) .
\end{align*}
Estimating using the field estimates in \eqref{eq:fieldRates}, we find
\[III \lesssim s^{n+2}s^{-n-4}\ln^2(s)+\sum_{i=3}^{n} s^{i+1}s^{-n-3} \lesssim s^{-2}\ln^2(s),\]
and $III$ is integrable.

\textbf{Term $IV$ ($j=n+1$ with $n\geq 1$).} For this term we have $ |\beta_p-\gamma_p|+|\beta_x-\gamma_x|=2$.  Because we have $\mfG^{n+2,n+2}(t)\lesssim \mcG_p^{n+2}(t)\lesssim \ln^2(t)$ from \eqref{eq:prel-G-n+2,n+2} and $\mfG^{n+2,i}(t)\lesssim \mcG_{x,p}^{n+2}(t) \lesssim 1$ for all $i=0,...,n+1$ from \eqref{eq:g-derivs-1}, we find
\[s\mfG^{n+2,|\gamma_p|}(s)+\mfG^{n+2,|\gamma_p|+1}(s) \lesssim s\]
for all values of $|\gamma_p| = 0,...,n+1$.
This yields
\begin{align*}
    IV &= \sum_{\substack{|\gamma_x+\gamma_p|=n+1 \\ \gamma_x \preceq\beta_x, \gamma_p \preceq\beta_p}} \sum_{i=0}^{|\beta_p-\gamma_p|} s^{i} \mcK_{i+|\beta_x-\gamma_x|}(s)\Big(s\mfG^{n+2,|\gamma_p|}+\mfG^{n+2,|\gamma_p|+1}\Big)\\
    &\lesssim  \sum_{\substack{|\beta_x-\gamma_x|=2 \\ \beta_p=\gamma_p,\gamma_x \preceq\beta_x}}  s \mcK_{2}(s)  + \sum_{\substack{|\beta_p-\gamma_p|=1, |\beta_x-\gamma_x|=1 \\ \gamma_p \preceq\beta_p, \gamma_x \preceq\beta_x}} \sum_{i=0}^{1} s^{i+1} \mcK_{i+1}(s) + \sum_{\substack{|\beta_p-\gamma_p|=2 \\ \gamma_p \preceq\beta_p,\beta_x=\gamma_x}} \sum_{i=0}^{2} s^{i+1} \mcK_{i}(s) \\
    &\lesssim s\mcK_2(s)\mathds{1}_{0\leq |\beta_p|\leq n+1}(s) + \Big(s\mcK_1(s)+s^2\mcK_2(s)\Big)\mathds{1}_{1\leq |\beta_p|\leq n+2}(s)\\
    &\qquad + \Big(s\mcK_0(s)+s^2\mcK_1(s)+s^3\mcK_2(s)\Big) \mathds{1}_{2\leq |\beta_p|\leq n+3}(s).
\end{align*}
Using the field estimates in \eqref{eq:fieldRates}, this reduces to
\begin{align*}
    IV\lesssim 
    \begin{cases}
     s^{-3}+s^{-2}+ s^{-2}\ln^2(s)\lesssim s^{-2}\ln^2(s) & \text{ for } n=1\\
     s^{-n-2}+s^{-n-1}+s^{-n}\lesssim s^{-n} & \text{ for } n\geq 2.
    \end{cases}
\end{align*}
In both cases, $IV$ is integrable.

\textbf{Term $V$ ($j=n+2$).} As $|\gamma_x+\gamma_p|=n+2$, we conclude $|\beta_p-\gamma_p|+|\beta_x-\gamma_x|=1$, and thus
\begin{align*}
    V&= \sum_{\substack{|\gamma_x+\gamma_p|=n+2\\ \gamma_x \preceq\beta_x, \gamma_p \preceq\beta_p}} \sum_{i=0}^{|\beta_p-\gamma_p|} s^{i} \mcK_{i+|\beta_x-\gamma_x|}(s) \Big(s\mfG^{n+3,|\gamma_p|}(s)+\mfG^{n+3,|\gamma_p|+1}(s)\Big)\\
    &\lesssim \sum_{\substack{|\gamma_x+\gamma_p|=n+2 \\ \gamma_x =\beta_x, \gamma_p \prec\beta_p}} \hspace{-0.55cm}   \rndP{\mcK_0(s)+s\mcK_{1}(s)} \!\left(s\mfG^{n+3,|\gamma_p|}(s)+\mfG^{n+3,|\gamma_p|+1}(s)\right)+\sum_{\substack{|\gamma_x+\gamma_p|=n+2 \\ \gamma_x \prec\beta_x, \gamma_p =\beta_p}} \hspace{-0.55cm} \mcK_{1}(s)\!\left(s\mfG^{n+3,|\gamma_p|}(s)+\mfG^{n+3,|\gamma_p|+1}(s)\right)\\
    &\lesssim \rndP{\mcK_0(s)+s\mcK_1(s)} \rndP{s\mfG^{n+3,|\beta_p|-1}+\mfG^{n+3,|\beta_p|}(s)} \mathds{1}_{1\leq |\beta_p| \leq n+3}(s)\\
    &\qquad + \mcK_1(s)\rndP{s\mfG^{n+3,|\beta_p|}(s)+\mfG^{n+3,|\beta_p|+1}(s)}\mathds{1}_{0\leq |\beta_p| \leq n+2}(s).
\end{align*}

With these estimates in place, we recombine the terms $I$-$V$ to find
\begin{equation}
\label{eq:Dg_np3}
    \begin{aligned}
        \infnorm{D_p^{\beta_p} D_x^{\beta_x} g^\alpha(t)}
	    &\lesssim 1+ \int_1^t \bigg\{s^{|\beta_p|+1}\mcK_{n+3}(s) \\
        &\hspace{-0.75cm} + \Big[ s^{|\beta_p|+1}\mcK_{n+2}(s)+\mcK_1(s)\rndP{s\mfG^{n+3,|\beta_p|}(s)+\mfG^{n+3,|\beta_p|+1}(s)} \Big]\mathds{1}_{0\leq |\beta_p| \leq n+2}(s)\\
        &\hspace{-0.75cm} + \Big[ s^{|\beta_p|}\mcK_{n+2}(s) +   \rndP{\mcK_0(s)+s\mcK_1(s)}\rndP{s\mfG^{n+3,|\beta_p|-1}+\mfG^{n+3,|\beta_p|}(s)} \Big] \mathds{1}_{1\leq |\beta_p|\leq n+3}(s) \bigg\}\,ds.
    \end{aligned}
\end{equation}
We then take the supremum over each $|\beta_p|=\ell$, recalling the definition
\[ \mfG^{k,j}(t):=\max_{\alpha=1,...,N} \sup_{\substack{|\beta_p+\beta_x|=k\\|\beta_p|=j}} \infnorm{D_p^{\beta_p} D_x^{\beta_x} g^\alpha(t)}, \]
and express this inequality separately for $\ell = 0$, $\ell = 1,..., n+2$, and $\ell = n+3$, which yields
\begin{subequations}
 \begin{align}
    \mfG^{n+3,0}(t) &\lesssim 1+\int_1^t\! \Big( s\mcK_{n+3}(s) + s \mcK_{n+2}(s)+ \mcK_1(s)\rndP{s\mfG^{n+3,0}(s)+\mfG^{n+3,1}(s)}\Big)\,ds,\label{eq:Gnp20_primary} \\
    \mfG^{n+3,\ell}(t) &\lesssim 1+\int_1^t \!\Big( s^{\ell+1}\mcK_{n+3}(s) + s^{\ell+1}\mcK_{n+2}(s)+\rndP{s\mcK_0(s)+s^2\mcK_1(s)}\mfG^{n+3,\ell-1}(s)\label{eq:Gnp2ell_primary} \\
	&\qquad +\rndP{\mcK_0(s)+2s\mcK_1(s)}\mfG^{n+3,\ell}(s)+\mcK_1(s)\mfG^{n+3,\ell+1}(s) \Big)\,ds \quad \forall \ell\in\{1,...,n+2\},\nonumber\\
    \mfG^{n+3,n+3}(t) &\lesssim 1+\int_1^t \!\Big( s^{n+4}\mcK_{n+3}(s) + s^{n+3}\mcK_{n+2}(s) \label{eq:Gnp3_primary}\\
    & \qquad + \rndP{\mcK_0(s)+s\mcK_1(s)}\rndP{s\mfG^{n+3,n+2}(s)+\mfG^{n+3,n+3}(s)}  \Big)\,ds. \nonumber
 \end{align}
\end{subequations}
As in the previous subsection, we combine the cases $n=0$ and $n\geq 1$ within the remaining steps by using \eqref{eq:K1estimate}.
 \\
 
\textbf{Step 2:} Uniform bound on $\mfG^{n+3}(t)$ and polynomial bound on $\mfG^{n+3,\ell}(t)$

As in the section above, we use equations \eqref{eq:Gnp20_primary}-\eqref{eq:Gnp3_primary} to construct polynomial growth estimates with a Gr\"onwall argument using the definition of $\mfG^{n+3}(t)$. 
Begin by constructing the first two terms of $\mfG^{n+3}$ in \eqref{eq:Gnp20_primary} and use that the first two terms are bounded by the entire sum. The field terms will be estimated using the decay rates in \eqref{eq:fieldRates}.
\begin{align*}
    \mfG^{n+3,0}(t) &\lesssim 1+\int_1^t \Big( s\mcK_{n+3}(s) + s \mcK_{n+2}(s)+ s\mcK_1(s)\rndP{\mfG^{n+3,0}(s)+s^{-1}\mfG^{n+3,1}(s)}\Big)\,ds\\
    &\lesssim 1+\int_1^t \Big( s^{-n-4}\ln(s)+s^{-n-3}\ln(s)+ s^{-n-2}\rndP{\mfG^{n+3,0}(s)+s^{-1}\mfG^{n+3,1}(s)}\Big)\,ds\\
    &\lesssim 1+\int_1^t  s^{-n-2} \mfG^{n+3}(s)\,ds.
\end{align*}
Similarly, multiplying \eqref{eq:Gnp2ell_primary} by $t^{-j}$ for every $j=1,...,n+2$, using $s\leq t$, and \eqref{eq:fieldRates}, we find
\begin{align*}
    t^{-j}\mfG^{n+3,j}(t) &\lesssim 1+\int_1^t \Big( s\mcK_{n+3}(s) + s\mcK_{n+2}(s)+s^{-(j-1)}\rndP{\mcK_0(s)+s\mcK_1(s)}\mfG^{n+3,j-1}(s) \\
	&\qquad +s^{-j}\rndP{\mcK_0(s)+2s\mcK_1(s)}\mfG^{n+3,j}(s)+s^{-j}\mcK_1(s)\mfG^{n+3,j+1}(s) \Big)\,ds\\ 
    %
    %
    &\lesssim 1+\int_1^t s^{-n-2}\Big( s^{-(j-1)}\mfG^{n+3,j-1}(s)+s^{-j}\mfG^{n+3,j}(s)+s^{-(j+1)}\mfG^{n+3,j+1}(s) \Big)\,ds\\ 
    &\lesssim 1+\int_1^t s^{-n-2}\mfG^{n+3}(s)\,ds.
\end{align*}
Finally, multiplying the last term \eqref{eq:Gnp3_primary} by $t^{-n-3}$ and performing the same operations, we have
\begin{align*}
    t^{-n-3}\mfG^{n+3,n+3}(t) &\lesssim 1+\int_1^t \Big( s\mcK_{n+3}(s) + \mcK_{n+2}(s) + s^{-(n+2)}\big(\mcK_0(s)+s\mcK_1(s)\big) \mfG^{n+3,n+2}(s)\\
    &\qquad  +s^{-(n+3)}\big(\mcK_0(s)+s\mcK_1(s)\big) \mfG^{n+3,n+3}(s)\Big)\,ds\\
    %
    %
    &\lesssim 1+\int_1^t s^{-n-2}\rndP{s^{-(n+2)}\mfG^{n+3,n+2}(s)+s^{-(n+3)}\mfG^{n+3,n+3}(s)} \,ds\\
    &\lesssim 1+\int_1^t s^{-n-2}\mfG^{n+3}(s) \,ds
\end{align*}
Summing these estimates yields
\[ \mfG^{n+3}(t)=1+\max_{\alpha=1,...,N}\sum_{j=0}^{n+3}t^{-j}\mfG^{n+3,j}(t) \lesssim 1+\int_1^t s^{-n-2}\mfG^{n+3}(s)\,ds \]
and a straightforward Gr\"onwall argument leads to 
\[\mfG^{n+3}(t)\lesssim 1.\]
Thus, for $n\in\N_0$ we have
\begin{equation}
\label{eq:GFpoly}
    \mfG^{n+3,\ell}(t)\lesssim t^\ell \qquad \forall \ell =0,...,n+3,
\end{equation}
which we then refine iteratively.
\\

\textbf{Step 3:} Refined estimates of $\mfG^{n+3,\ell}(t)$ for $\ell=1,...,n+3$

As $\mfG^{n+3,0}(t)\lesssim 1$ from \eqref{eq:GFpoly}, we estimate $\mfG^{n+3,\ell}(t)$ for any $\ell\in\{1,...,n+1\}$ recursively while assuming $\mfG^{n+3,\ell-1}(t)\lesssim 1$ 

in Equation \eqref{eq:Gnp2ell_primary}.
When applying the decay rates in \eqref{eq:fieldRates} and the polynomial bound for $\mfG^{n+3,\ell+1}(s)$ in \eqref{eq:GFpoly}, Gr\"{o}nwall's inequality implies
$$\mfG^{n+3,\ell}(t)\lesssim1+\int_1^t \mcK_1(s)\mfG^{n+3,\ell+1}(s) \,ds \lesssim 1+\int_1^t s^{\ell-n-2}\,ds \lesssim 1 \qquad \forall \ell=1,...,n$$

for $n\geq 1$, and
\begin{equation}\label{eq:GFnp1}
    \mfG^{n+3,n+1}(t)\lesssim 1+\int_1^t \Big\{ s^{n+2}\mcK_{n+2}(s)+{\mcK_1(s)\mfG^{n+3,n+2}(s)} \Big\}\,ds \lesssim 1+\int_1^t s^{-1}\,ds \lesssim \ln(t)
\end{equation}
for all $n\in\N_0$.

With this, we estimate the next term with $\ell=n+2$ in Equation \eqref{eq:Gnp2ell_primary} to find 
\begin{align*}
    \mfG^{n+3,n+2}(t) 
    &\lesssim 1+\int_1^t \Big( s^{n+3}\mcK_{n+3}(s) + s^{n+3}\mcK_{n+2}(s)+\rndP{s\mcK_0(s)+s^2\mcK_1(s)}\ln(s) \\ & \qquad \qquad +\rndP{\mcK_0(s)+s\mcK_1(s)}\mfG^{n+3,n+2}(s)+\mcK_1(s)\mfG^{n+3,n+3}(s)\Big)\,ds 
\end{align*}
Upon applying the decay rates in \eqref{eq:fieldRates}, and the polynomial bound for $\mfG^{n+3,n+3}(s)$, Gr\"{o}nwall's inequality then yields
\begin{equation} 
\label{eq:GFnp2}
    \mfG^{n+3,n+2}(t) \lesssim 1+\int_1^t \Big\{s^{n+3}\mcK_{n+2}(s)+ {\mcK_1(s)\mfG^{n+3,n+3}(s)} \Big\}\,ds  \lesssim 1+\int_1^t 1\,ds \lesssim t.
\end{equation}

Finally we estimate the last term with all $p$-derivatives on the fields, $\ell=n+3$, so that
\begin{align*}
    \mfG^{n+3,n+3}(t) &\lesssim 1+\int_1^t \Big( s^{n+4}\mcK_{n+3}(s) + s^{n+3}\mcK_{n+2}(s)+ \rndP{\mcK_0(s)+s\mcK_1(s)}\rndP{s\mfG^{n+3,n+2}(t) +\mfG^{n+3,n+3}(s)}  \Big)ds 
\end{align*}
Utilizing the decay rates in \eqref{eq:fieldRates} and the result from \eqref{eq:GFnp2}, Gr\"{o}nwall's inequality implies 
\begin{equation}
\label{eq:GFnp3}
    \mfG^{n+3,n+3}(t)\lesssim 1+\int_1^t \Big\{ {s^{n+4}\mcK_{n+3}(s)}+{s^{n+3}\mcK_{n+2}(s)} \Big\}\,ds \lesssim 1+\int_1^t s^{-1}\ln(s)\,ds \lesssim \ln^2(t).
\end{equation}
This estimate is optimal until both $\mcK_{n+2}(t)$ and $\mcK_{n+3}(t)$ are known to possess stronger decay rates.

Now that $\mfG^{n+3,n+3}(t)$ has an improved bound, we refine the estimate on $\mfG^{n+3,n+2}(t)$ using Equation \eqref{eq:GFnp2} to find
\[\mfG^{n+3,n+2}(t) \lesssim 1+\int_1^t \Big\{{s^{n+3}\mcK_{n+2}(s)} + \mcK_1(s)\ln^2(s) \Big\}\,ds \lesssim 1+\int_1^t s^{-1}\ln(s)\,ds \lesssim \ln^2(t).\]
In summary, we have obtained
$$ \mcG_{x,p}^{n+3}(t)\lesssim 1 + \ln(t) + \mfG^{n+3,n+2}(t) \lesssim \ln^2(t) \qquad \text{and} \qquad \mcG_{p}^{n+3}(t)\lesssim \mfG^{n+3,n+3}(t)\lesssim \ln^2(t)$$
for all $n\in\N_0$, as desired and the proof is complete.

\end{proof}

\appendix
\section{External Lemmas}

First, we recall the previously-known growth of momentum derivatives of $g^\alpha$.
\begin{lemma}\cite[Lemma 3.5]{Pan-BA2025}
\label{Dpg}
We have
$$\mcG_p^1(t) \lesssim 1 + \int_1^t \left (s \mcK_0(s) + s^2 \mcK_1(s)  \right ) ds.$$
\end{lemma}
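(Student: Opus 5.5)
The estimate follows from the derivative-along-characteristics argument of \cite{Pan-BA2025}. I would work with the translated distribution $g^\alpha$ and the operator $\mcV_g$ from \eqref{eq:vlasov-op-g}.

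\textbf{Step 1: commute $\nabla_p$ through $\mcV_g$.} Differentiating $\mcV_g g^\alpha=0$ in $p_i$ gives
$$\mcV_g\bigl(\partial_{p_i} g^\alpha\bigr) = -\frac{e_\alpha}{m_\alpha}\,\partial_{p_i}\!\Bigl[K^\alpha(t,x+v_\alpha(p)t,p)\Bigr]\cdot\bigl(-t\A_\alpha(p)\nabla_x+\nabla_p\bigr)g^\alpha + \frac{e_\alpha}{m_\alpha}\,t\,K^\alpha\cdot\bigl(\partial_{p_i}\A_\alpha(p)\bigr)\nabla_x g^\alpha .$$
The coefficients are controlled as follows.
\begin{itemize}
\item Since $|p|\le\beta$ on the support of $g^\alpha$, the derivative of the force satisfies
$$\bigl|\partial_{p_i}[K^\alpha(t,x+v_\alpha(p)t,p)]\bigr| \lesssim \mcK_0(t)+t\,\mcK_1(t).$$
\item The remaining coefficient satisfies $|t K^\alpha\,\partial_p\A_\alpha|\lesssim t\,\mcK_0(t)$.
\item These bounds are valid on $\mcS_g^\alpha(t)$ by the definition of $\mcK_\ell$, which includes the supremum along characteristics.
\end{itemize}

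\textbf{Step 2: estimate $\nabla_x g^\alpha$ separately.} Differentiating in $x$ gives
$$\mcV_g(\nabla_x g^\alpha) = O(\mcK_1)\cdot(t\nabla_x+\nabla_p)g^\alpha .$$
Integrating along characteristics and applying Gr\"onwall with the weight $\int s\,\mcK_1(s)\,ds$ gives the coupled bound
$$\|\nabla_x g^\alpha(t)\|_\infty \lesssim 1+\int_1^t \mcK_1(s)\,\mcG^1_p(s)\,ds .$$
This assumes $\int s\,\mcK_1(s)\,ds<\infty$; otherwise that factor is simply kept.

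\textbf{Step 3: close the system.} Integrating the $\nabla_p g^\alpha$ equation along characteristics from $s=1$ (the data on $[0,1]$ are bounded) gives
$$\mcG^1_p(t) \lesssim 1+\int_1^t \bigl(\mcK_0+s\mcK_1\bigr)\bigl(s\|\nabla_x g^\alpha\|_\infty+\mcG^1_p\bigr)\,ds + \int_1^t s\,\mcK_0\,\|\nabla_x g^\alpha\|_\infty\,ds .$$
The aim is to show that $\|\nabla_x g^\alpha\|_\infty\lesssim 1$, namely $\|\nabla_x f^\alpha\|_\infty\le C$ from Theorem \ref{T0}, which transfers to $g^\alpha$ since $\nabla_x g^\alpha(t,x,p)=\nabla_x f^\alpha(t,x+v_\alpha(p)t,p)$. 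With that bound the $\nabla_x$ contributions become $s\,\mcK_0+s^2\,\mcK_1$. The remaining term $(\mcK_0+s\mcK_1)\,\mcG^1_p$ is absorbed by Gr\"onwall, with exponential factor $\exp\int(\mcK_0+s\mcK_1)\,ds$.

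\textbf{Main obstacle.} The difficulty is this Gr\"onwall factor, which must be uniformly bounded under the field decay available in the paper. By Theorem \ref{T0}, $\mcK_0\lesssim t^{-2}$ and $\mcK_1\lesssim t^{-3}\ln t$ in the interior of the light cone, where $t-|x|\gtrsim t$ by the characteristic estimate of Theorem \ref{T0}. Hence the exponent is $O(1)$ and the factor is bounded, which yields the stated inequality.
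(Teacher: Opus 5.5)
Your argument is correct and is the standard one behind the cited \cite[Lemma 3.5]{Pan-BA2025}, which is also the commutator--characteristics--Gr\"onwall scheme the paper uses for higher derivatives in Section~\ref{sec:g_deriv}: differentiate $\mcV_g g^\alpha=0$ in $p$, use $\|\nabla_x g^\alpha(t)\|_\infty=\|\nabla_x f^\alpha(t)\|_\infty\le C$ from Theorem~\ref{T0}, and note that $\int_1^\infty(\mcK_0+s\mcK_1)\,ds<\infty$ keeps the Gr\"onwall factor bounded. Your Step~2 is never used and can be dropped, since the bound on $\nabla_x g^\alpha$ comes directly from Theorem~\ref{T0}.
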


Next, we recall an estimate on second-order derivatives of the translated distribution function using the decay of field derivatives.
\begin{lemma}\cite[Lemma 4.1]{Pan-BA2025}
\label{D2g}
We have
$$\mcG_p^2(t) \lesssim 1 + \int_1^t \left (s^3 \mcK_2(s) + \ln^2(s) \left [s^2 \mcK_1(s) + s \mcK_0(s) \right ] \right )\,ds,$$
and
$$\mcG_{x,p}^2(t) \lesssim 1 + \int_1^t s^2 \mcK_1(s)\,ds.$$
Hence, from the initial field decay we have the preliminary growth estimates
$$\Vert \nabla^2_x g(t) \Vert_\infty \lesssim 1, \qquad \Vert \nabla_p \nabla_x g(t) \Vert_\infty \lesssim \ln^2(t),  \qquad \mathrm{and} \qquad \Vert \nabla^2_p g(t) \Vert_\infty \lesssim \ln^4(t).$$
\end{lemma}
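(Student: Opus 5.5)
The plan is to follow the commutator-and-characteristics scheme used in the proof of Lemma \ref{Dkg}, specialized to order two. The structural differences from that lemma are that no cancellation is assumed, and that only the baseline small-data decay is available: Theorem \ref{T0}, Lemma \ref{lem:glassey-strauss} and Theorem \ref{oldT1}(c) give $\mcK_0(t)\lesssim t^{-2}$, $\mcK_1(t)\lesssim t^{-3}\ln(t)$, $\mcK_2(t)\lesssim t^{-4}\ln(t)$. The first-order information comes from Lemma \ref{Dpg}, which gives $\mcG_p^1(t)\lesssim 1+\int_1^t(s\mcK_0+s^2\mcK_1)\,ds\lesssim \ln^2(t)$. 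Differentiating \eqref{RVMg} once in $x$ and integrating along characteristics gives $\|\nabla_x g^\alpha(t)\|_\infty\lesssim 1+\int_1^t \mcK_1(s)\,(s\|\nabla_xg^\alpha\|_\infty+\mcG^1_p)\,ds$. Gr\"onwall, with $\int_1^\infty s\mcK_1\,ds<\infty$, then gives $\|\nabla_x g^\alpha\|_\infty\lesssim 1$.

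\textbf{Step 1 (commutator identities).} Apply $D_p^{\beta_p}D_x^{\beta_x}$ with $|\beta_x+\beta_p|=2$ to $\mcV_g g^\alpha=0$, exactly as in \eqref{eq:k-derivs-g-2}. I will use the coefficient bounds $|\nabla_x^jK^\alpha(t,x+v_\alpha(p)t,p)|\lesssim\mcK_j(t)$ and $|\nabla_p^j(K^\alpha(t,x+v_\alpha(p)t,p))|\lesssim\sum_{i\le j}t^i\mcK_i(t)$. Writing $\mfG^{2,j}$ for the norms with $j$ momentum derivatives, integration along characteristics yields three inequalities.
\begin{itemize}
\item For $\nabla_x^2 g$: the source is $\mcK_2(s\,\mcG^1_{x,p}+\mcG^1_p)$, plus the coupling $\mcK_1(s\,\mfG^{2,0}+\mfG^{2,1})$.
\item For $\nabla_x\nabla_p g$: the source is $(s\mcK_2+\mcK_1)(s+\mcG^1_p)$, plus $(\mcK_0+s\mcK_1)(s\mfG^{2,0}+\mfG^{2,1})+\mcK_1(s\mfG^{2,1}+\mfG^{2,2})$.
\item For $\nabla_p^2 g$: the source is $(s^2\mcK_2+s\mcK_1+\mcK_0)(s\|\nabla_xg\|_\infty+\mcG^1_p)$, plus $(\mcK_0+s\mcK_1)(s\mfG^{2,1}+\mfG^{2,2})$.
\end{itemize}

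\textbf{Step 2 (weighted Gr\"onwall).} Set $\mfG^2(t)=1+\sum_{j=0}^2 t^{-j}\mfG^{2,j}(t)$. Multiplying the $j$-th inequality by $t^{-j}$ and using $s\le t$, every coupling coefficient becomes $\lesssim s\mcK_1+\mcK_0$, up to powers that are already paid for. Since this is integrable under the baseline decay, Gr\"onwall gives $\mfG^2\lesssim 1$, that is, $\mfG^{2,j}(t)\lesssim t^j$.

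\textbf{Step 3 (refinement, main obstacle).} I return to the unweighted inequalities, insert the polynomial bounds only in the highest-coupling slot, and apply Gr\"onwall with multiplier $\exp(\int_1^t(\mcK_0+s\mcK_1)\,ds)\lesssim1$.
\begin{itemize}
\item For $\mfG^{2,0}$ and $\mfG^{2,1}$, the worst remaining terms are $\mcK_1\mfG^{2,2}\lesssim s^2\mcK_1$ and $s\,\mcK_1$, together with integrable contributions from $\mcK_2$. This yields $\mcG^2_{x,p}(t)\lesssim 1+\int_1^t s^2\mcK_1(s)\,ds$.
\item For $\mfG^{2,2}$, the forcing contains $s^3\mcK_2$ (from $s^2\mcK_2\cdot s\|\nabla_xg\|_\infty$). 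The terms $s\mcK_0\,\mcG_p^1$ and $s^2\mcK_1\,\mcG^1_p$ carry the factor $\mcG_p^1\lesssim\ln^2$, and $s(\mcK_0+s\mcK_1)\mfG^{2,1}$ carries a similar factor through the bound just obtained. This gives the stated bound on $\mcG_p^2$.
\end{itemize}
The delicate point is ordering the refinement so that the $\ln^2$ factor is only picked up from $\mcG^1_p$ and is not compounded. I would first close $\mfG^{2,1}$ using the crude bound $\mfG^{2,2}\lesssim s^2$ paired with $\mcK_1$, and only then feed it into the $\mfG^{2,2}$ inequality.

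\textbf{Step 4 (preliminary rates).} Inserting the baseline decay gives $\int s^2\mcK_1\lesssim\ln^2(t)$, which accounts for the $\ln^2$ bound on $\nabla_p\nabla_xg$. For the pure spatial derivative, $\nabla_x^2 g$ couples only to $\mcK_1(s\mfG^{2,0}+\mfG^{2,1})$ with $\mfG^{2,1}\lesssim\ln^2$, so it stays bounded. For the momentum derivative, $\int(s^3\mcK_2+\ln^2 s\,[s^2\mcK_1+s\mcK_0])\,ds\lesssim\ln^4(t)$.
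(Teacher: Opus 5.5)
Your scheme is the right one. You commute $D_p^{\beta_p}D_x^{\beta_x}$ with the translated Vlasov operator, run a weighted Gr\"onwall on $\mfG^2=1+\sum_j t^{-j}\mfG^{2,j}$ to get $\mfG^{2,j}\lesssim t^j$, and then refine in the order $\mfG^{2,0},\mfG^{2,1},\mfG^{2,2}$. This is the order-two, no-cancellation version of the paper's argument for Lemma \ref{Dkg}; the paper itself only cites this lemma from \cite{Pan-BA2025}. Your Step 1 inequalities, Step 2, and the bound $\|\nabla_x^2 g\|_\infty\lesssim 1$ are fine.

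The step that fails as written is the general bound $\mcG^2_{x,p}(t)\lesssim 1+\int_1^t s^2\mcK_1(s)\,ds$ in Step 3. Your own $\nabla_x\nabla_p$ inequality contains the coupling $(\mcK_0+s\mcK_1)\,s\,\mfG^{2,0}$. Its piece $s\mcK_0\mfG^{2,0}$ arises when $\nabla_p$ falls on $\mathbb{A}_\alpha(p)$, or on $v_\alpha(p)\times B$, inside the $x$-coefficient $-t\mathbb{A}_\alpha(p)K^\alpha$. Once $\mfG^{2,0}\lesssim 1$, it contributes $\int_1^t s\mcK_0(s)\,ds$. Every other term you discard ($s^2\mcK_2$, $s\mcK_1$, $\mcK_2\mcG^1_p$, and so on) is integrable under the baseline rates and can go into the constant. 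This one is not: with $\mcK_0\lesssim s^{-2}$ it has size $\ln t$, and it cannot be dominated by $\int_1^t s^2\mcK_1$ using upper bounds alone. What your argument actually proves is
\[
\mcG^2_{x,p}(t)\lesssim 1+\int_1^t \bigl(s^2\mcK_1(s)+s\mcK_0(s)\bigr)\,ds .
\]
You should state the bound this way, or exhibit a cancellation that removes the term; the commutator structure offers none. The correction does not hurt anything downstream. It still gives $\|\nabla_p\nabla_x g\|_\infty\lesssim\ln^2 t$. In the base case of Section \ref{sec:proof}, where $\mcK_0\lesssim t^{-3}\ln^8 t$, the extra integral is finite, so $\mcG^2_{x,p}\lesssim 1$ still holds there.

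A smaller bookkeeping point concerns the $\mfG^{2,2}$ inequality. There $\mcG^1_p$ multiplies only $s^2\mcK_2+s\mcK_1+\mcK_0$, not $s^2\mcK_1+s\mcK_0$. So the weight $\ln^2(s)$ in front of $s^2\mcK_1+s\mcK_0$ actually comes from the coupling $s(\mcK_0+s\mcK_1)\mfG^{2,1}$, together with $\mfG^{2,1}\lesssim\ln^2 s$. That bound on $\mfG^{2,1}$ requires inserting the baseline rates into the corrected $\mcG^2_{x,p}$ bound before you close $\mfG^{2,2}$. This is exactly the ordering you propose, but the logarithm is inherited from $\nabla_x\nabla_p g$, not from $\mcG^1_p$.
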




\begin{thebibliography}{}

\bibitem{AbramSteg} Abramowitz, M. and Stegun, I., Handbook of Mathematical Functions with Formulas, Graphs, and Mathematical Tables. Dover (9th Ed) {\bf 1964}.

\bibitem{BD} Bardos, C. and Degond, P., Global existence for the {V}lasov-{P}oisson equation in {$3$} space variables with small initial data. Ann. Inst. H. Poincar\'e Anal. Non Lin\'eaire {\bf 1985}, 2(2): 101-118.

\bibitem{BKR} Batt, J., Kunze, M., and Rein, G., On the asymptotic behavior of a one-dimensional, monocharged plasma and a rescaling method. Advances in Differential Equations {\bf 1998}, 3: 271-292.


\bibitem{BCP1} Ben-Artzi, J., Calogero, S., and Pankavich, S., Arbitrarily large solutions of the Vlasov-Poisson system. SIAM J. Math. Anal. {\bf 2018}, 50(4): 4311-4326.

\bibitem{BCP2} Ben-Artzi, J., Calogero, S., and Pankavich, S., Concentrating solutions of the relativistic Vlasov-Maxwell system. Commun. Math. Sci. {\bf 2019}, 17(2): 377-392.

\bibitem{BMP} Ben-Artzi, J., Morisse, B., and Pankavich, S., Asymptotic Growth and Decay of Two-dimensional Symmetric Plasmas. Kinetic and Related Models {\bf 2024}, 17(1): 29-51.


\bibitem{Bigorgne} Bigorgne, L., Global existence and modified scattering for the solutions to the Vlasov-Maxwell system with a small distribution function, Analysis \& PDE {\bf 2025}, 18(3): 629-714.

\bibitem{BigorgneScattering} Bigorgne, L., Scattering map for the Vlasov-Maxwell system around source-free electromagnetic fields, arXiv:2312.12214.

\bibitem{BigorgneRVM} Bigorgne, L., Sharp Asymptotic Behavior of Solutions of the 3d Vlasov-Maxwell System with Small Data, Comm. Math. Phys. {\bf 2020}, 376, 893-992.

\bibitem{Bigorgne-Ruiz} Bigorgne, L. and Ruiz R. V., Late-time asymptotics of small data solutions for the Vlasov-Poisson system, Nonlinearity  {\bf 2026}, 39(4): 045007.


\bibitem{Breton} Breton E., Modified Scattering for Small Data Solutions to the Vlasov-Maxwell System: A Short Proof, Asymptotic Analysis {\bf 2026}, 148(2): 707-725.

\bibitem{Breton2} Breton E., A note on the non-$L^1$ asymptotic completeness of the Vlasov-Maxwell system, arxiv.org/abs/2509.04025.

\bibitem{Flynn} Flynn, P., Ouyang, Z., Pausader, B. et al., Scattering Map for the Vlasov-Poisson System, Peking Math J. {\bf 2023} 6: 365-392. 


\bibitem{Glassey} Glassey, R., The Cauchy Problem in Kinetic Theory.  SIAM: {\bf 1996}.

\bibitem{GPS}  Glassey, R., Pankavich, S., and Schaeffer, J., Decay in Time for a One-Dimensional, Two Component Plasma. Math. Meth Appl. Sci. {\bf 2008}, 31:2115-2132.

\bibitem{GPS2} Glassey, R., Pankavich, S., and Schaeffer, J., On long-time behavior of monocharged and neutral plasma in one and one-half dimensions. Kinetic and Related Models {\bf 2009}, 2: 465-488.

\bibitem{GPS4} Glassey, R., Pankavich, S., and Schaeffer, J., Time Decay for Solutions to the One-dimensional Equations of Plasma Dynamics. Quarterly of Applied Mathematics {\bf 2010}, 68: 135-141.

\bibitem{GPS5} Glassey, R., Pankavich, S., and Schaeffer, J., Large Time Behavior of the Relativistic Vlasov-Maxwell System in Low Space Dimension. Differential \& Integral Equations {\bf 2010} 23: 61-77. 


\bibitem{GS} Glassey, R., Strauss, W., Absence of shocks in an initially dilute collisionless plasma. Comm. Math. Phys. {\bf 1987}, 113, 191-208.

\bibitem{GS2} Glassey, R., Strauss, W., Singularity formation in a collisionless plasma could occur only at high velocities. Archive for rational mechanics and analysis {\bf 1986}, 92, 59-90.



\bibitem{Horst} Horst, E., Symmetric plasmas and their decay. Comm. Math. Phys. {\bf 1990}, 126: 613-633.

\bibitem{HRV} Hwang, H., Rendall, A., and Velazquez, J., Optimal gradient estimates and asymptotic behaviour for the Vlasov-Poisson system with small initial data. Archive for rational mechanics and analysis {\bf 2011}, 200: 313-360.


\bibitem{Ionescu} Ionescu, A., Pausader, B., Wang, X., Widmayer, K., On the asymptotic behavior of solutions to the Vlasov-Poisson system. International Mathematics Research Notices {\bf 2021}, 155, https://doi.org/10.1093/imrn/rnab155.





\bibitem{Mattingly2025} Mattingly, G., Pankavich, S. and Ben-Artzi, J.,  Arbitrary Polynomial Decay Rates of Neutral, Collisionless Plasmas. arXiv:2404.05812 {\bf 2024}.

\bibitem{Pankavich2020} Pankavich, S., Exact Large Time Behavior of Spherically-Symmetric Plasmas. SIAM J. Math. Anal. {\bf 2021} 53(4): 4474-4512.

\bibitem{Pankavich2021} Pankavich, S., Asymptotic Dynamics of Dispersive Plasmas, Communications in Mathematical Physics {\bf 2022}, 391: 455-493.

\bibitem{Pankavich2022} Pankavich, S., Scattering and Asymptotic Behavior of Solutions to the Vlasov-Poisson System in High Dimension, SIAM J. Math. Anal., {\bf 2023}, 55(5): 4727-4750.

\bibitem{Pan-BA2025} Pankavich, S. and Ben-Artzi, J., Modified Scattering of Solutions to the Relativistic Vlasov-Maxwell System Inside the Light Cone, J. London Math. Soc., {\bf 2025}, 112: e70346.



Equations, {\bf 2007}, Eds. C. M. Dafermos and E. Feireisl, Elsevier: 383-479.

\bibitem{Sch} Schaeffer, J., Large-time behavior of a one-dimensional monocharged plasma. Diff. and Int. Equations {\bf 2007}, 20(3): 277-292.


\bibitem{Schlue-Taylor} Schlue, V. and Taylor, M., Inverse modified scattering and polyhomogeneous expansions for the Vlasov--Poisson system. arXiv:2404.15885 {\bf 2024}.

\bibitem{Smulevici} Smulevici, J., Small data solutions of the Vlasov-Poisson system and the vector field method.  Ann. PDE {\bf 2016} 2(2).




\end{thebibliography}
\end{document}